\documentclass[a4paper,12pt,reqno]{amsart}
\usepackage{graphicx}

\usepackage{color}%for comments

\usepackage{amsmath,amstext,amssymb,amsopn,amsthm}
\usepackage{url}

\usepackage[margin=30mm]{geometry}
\usepackage{eucal,mathrsfs,dsfont}%gives nicer set-names. Use \mathds instead of \mathds
\usepackage{soul}

\newtheorem{theorem}{Theorem}[section]
\newtheorem{corollary}[theorem]{Corollary}
\newtheorem{lemma}[theorem]{Lemma}
\newtheorem{proposition}[theorem]{Proposition}

\theoremstyle{definition}

\newtheorem{definition}[theorem]{Definition}

\theoremstyle{remark}
\newtheorem{remark}[theorem]{Remark}

\newcommand{\eps}{\varepsilon}
\newcommand{\calA}{\mathcal{A}}
\newcommand{\Aa}{\mathcal{A}_{\alpha}}

\newcommand{\calG}{\mathcal{G}}

\newcommand{\calL}{\mathcal{L}}

\newcommand{\Ee}{\mathds{E}} % Expectation
\newcommand{\Pp}{\mathds{P}} % Probability
\newcommand{\R}{\mathds{R}}
\newcommand{\Rd}{\mathds{R}^d}
\newcommand{\N}{{\mathds{N}}}

\DeclareMathOperator{\dist}{dist}

\title[Harmonic functions for $x$-dependent rectilinear stable processes]{Harmonic functions on balls for $x$-dependent rectilinear stable processes}

\author[T. Kulczycki]{Tadeusz Kulczycki}
\author[M. Ryznar]{Micha{\l} Ryznar}
\thanks{T. Kulczycki and M. Ryznar were supported in part by the National Science Centre, Poland, grant no. 2023/49/B/ST1/03964}
\address{Faculty of Pure and Applied Mathematics, Wroc{\l}aw University of Science and Technology, Wyb. Wyspia{\'n}skiego 27, 50-370 Wroc{\l}aw, Poland.}
\email{tadeusz.kulczycki@pwr.edu.pl}
\email{michal.ryznar@pwr.edu.pl}

\begin{document}

\begin{abstract} We obtain sharp estimates for functions harmonic with respect to $x$-dependent rectilinear stable processes in balls, under the assumption that the Dirichlet exterior data are radial about the center. The main idea of the proof is based on the construction of global barrier functions for the $x$-dependent rectilinear fractional Laplacian in balls.
\end{abstract}

\maketitle

\section{Introduction}
Let $\alpha \in (0,2)$, $d \in \N$, $d \ge 2$ and $Z_t = (Z_t^{(1)},\ldots,Z_t^{(d)})^T$ be the rectilinear $\alpha$-stable process on $\R^d$, that is $Z_t^{(1)}, \ldots, Z_t^{(d)}$ are independent one-dimensional symmetric  $\alpha$-stable processes. We consider a stochastic differential equation
\begin{equation}
\label{main}
dX_t = A(X_{t-}) \, dZ_t, \quad X_0 = x \in \R^d, 
\end{equation}
where $A(x) = (a_{ij}(x))$ is a $d \times d$ matrix and there are constants $\eta_1, \eta_2 > 0$, such that for any $x \in \R^d$, $i, j \in \{1,\ldots,d\}$
\begin{equation*}
|a_{ij}(x)| \le \eta_1,
\end{equation*}
\begin{equation*}
\det(A(x)) \ge \eta_2,
\end{equation*}
and for any $i, j \in \{1,\ldots,d\}$ the functions $\R^d \ni x \to a_{ij}(x)$ are continuous.
 
It is well known (see \cite{BC2006}) that SDE (\ref{main}) has a unique weak solution $X$, which we call {\it the $x$-dependent rectilinear $\alpha$-stable process on $\R^d$}. It is a typical example of a pure jump process, whose L{\'e}vy measure depends on the position and is singular with respect to the Lebesgue measure on $\R^d$. 

Let $\Ee^x$, $\Pp^x$ denote respectively the expected value and the probability of the process $X$ starting from $x \in \R^d$. By $\tau_D = \inf\{t \ge 0:\, X_t \notin D \}$ we denote the first exit time of an open set $D \subset \R^d$ by $X$.

We say that a function $u$ that is Borel measurable and bounded in $\R^d$ is {\it harmonic} with respect to $X$ in a bounded open set $D \subset \R^d$ if 
$$
u(x) = \Ee^x(u(X_{\tau_D})) \quad \text{for every $x \in D$.}
$$
Clearly, if $D \subset \R^d$ is a bounded open set and $g: D^c \to \R$ is a bounded Borel function then $u$ given by
\begin{equation}
\label{Dirichlet}
u(x)= \left\{              
\begin{array}{lll}                   
\Ee^x(g(X_{\tau_D})),&\text{for}& x \in D, \\  
g(x),&\text{for}& x \in D^c
\end{array}       
\right. 
\end{equation}
is harmonic with respect to $X$ on $D$. The function $g$ may be treated as Dirichlet exterior data for $u$.

By Proposition 4.1 of \cite{BC2006}, the infinitesimal generator of the Markov process $X$ determined by (\ref{main}) is 
\begin{eqnarray}
\nonumber
\calL f(x) &=&  \Aa \sum_{i = 1}^d   \int_{\R \setminus \{0\}} \left[f(x + a_i(x) w) - f(x) - w 1_{\{|w| \le 1\}} \nabla f(x) a_i(x)\right] \, \frac{dw}{|w|^{1 + \alpha}},\\
\label{operator_L}
&=& \frac{\Aa}{2} \sum_{i = 1}^d   \int_{\R \setminus \{0\}} \left[f(x + a_i(x) w) + f(x - a_i(x) w) - 2f(x) \right] \, \frac{dw}{|w|^{1 + \alpha}},
\end{eqnarray}
where $\Aa = \alpha 2^{\alpha-1} \pi^{-1/2} \Gamma((1+\alpha)/2)/ \Gamma(1-\alpha/2)$ and  $a_i(x) = (a_{1i}(x),\ldots, a_{di}(x))$ is the $i$th column of $A(x)$. We call $\calL$ {\it the $x$-dependent rectilinear fractional Laplacian}. This operator is often presented in the following form
$$
\calL = -\sum_{i = 1}^d \left(-\partial_{a_i(x)}^2\right)^{\alpha/2},
$$
see e.g. (1.9) in \cite{FR2024}. $-\calL$ is a typical example of a nonlocal elliptic operator, which is not translation invariant and whose  
L{\'e}vy measure is singular with respect to the Lebesgue measure on $\R^d$ (see page 115 in \cite{FR2024book}). When  $A(x)$ is the identity matrix for each $x \in \R^d$ then $\calL$ is called {\it the rectilinear fractional Laplacian} (see \cite{CHZ2025}). Such operators have been extensively studied in recent years. It is well known that if $D \subset \R^d$ is an open set, $f:\R^d \to \R$ is a bounded Borel function and $f \in C_b^2(D)$ then $\calL f(x)$ is well defined for any $x \in D$. One may show (see Corollary \ref{Lharmonic_in_D}) that if $D$, $f$ are as above and additionally $D$ is bounded and $\calL f(x) = 0$ for any $x \in D$ then $f$ is harmonic with respect to $X$ on $D$. 

As usual, we denote a ball with center at $z \in \R^d$ and radius $r > 0$ by $B(z,r)$. For any set $D \subset \R^d$ we denote its complement by $D^c$. For any $r > 0$, we say that $g:B^c(z,r) \to \R$ is radial with respect to $z \in \R^d$ when there exists a radial profile function $\tilde{g}:[r,\infty) \to \R$ such that  for any $x \in B^c(z,r)$ we have $g(x) = \tilde{g}(|x-z|)$. 

The main result of this paper is the following theorem, which provides estimates for functions harmonic with respect to $X$ in balls, under the assumption that the Dirichlet exterior data are radial about the center.
\begin{theorem}
\label{main_thm}
Let $z \in \R^d$, $r > 0$, $D = B(z,r)$ and $g: D^c \to \R$ be a bounded Borel function, which is radial with respect to $z$. Denote by $\tilde{g}$ the radial profile function of $g$. Let $u$ be the function harmonic with respect to $X$ on $D$, which is given by (\ref{Dirichlet}). For each fixed $z \in \R^d$, $r > 0$ and $x \in D$ there exists a Borel function $f_{D}^x: [r,\infty) \to [0,\infty)$ such that 
\begin{equation*}
u(x) = \int_r^{\infty} f_{D}^x(y) \tilde{g}(y) \, dy.
\end{equation*}
Moreover, there exist constants $C_1, C_2 >0$ which depend only on $\alpha$, $d$, $\eta_1$, $\eta_2$ such that for any $y \in (r,\infty)$ we have 
\begin{equation*}
C_1 \varphi_D^x(y) \le f_{D}^x(y) \le C_2 \varphi_D^x(y) ,
\end{equation*}
where
\begin{equation}
\label{phi_formula}
\varphi_D^x(y) = \frac{\delta_D^{\alpha/2}(x) r^{\alpha/2}}{(y - r)^{\alpha/2} y^{\alpha/2} (y + \delta_D(x) - r)}
\end{equation}
and $\delta_D(x) = r - |x - z|$.
\end{theorem}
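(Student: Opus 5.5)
By scaling and translation (the class of matrices $A$ with the bounds $\eta_1,\eta_2$ and continuity is invariant under $x\mapsto z+rx$), I will assume $z=0$ and $r=1$. Fix $x\in D$ and define the measure $\mu^x(E)=\Pp^x(|X_{\tau_D}|\in E)$ on $[1,\infty)$. Then $u(x)=\int \tilde g\,d\mu^x$.

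The plan has three parts.
\begin{enumerate}
\item Show that $\mu^x$ puts no mass on $\{1\}$. This requires that $X$ does not exit $D$ continuously or by landing exactly on the sphere.
\item Show that $\mu^x$ is absolutely continuous on $(1,\infty)$, and call its density $f_D^x$.
\item Establish the two-sided bound $f_D^x\asymp\varphi_D^x$.
\end{enumerate}
Both (2) and (3) follow from the estimate
\begin{equation*}
C_1\int_a^b\varphi_D^x(y)\,dy\le \mu^x((a,b))\le C_2\int_a^b\varphi_D^x(y)\,dy \qquad\text{for all } 1<a<b,
\end{equation*}
together with the fact that the total mass near $y=1$ is controlled by the integrability of $(y-1)^{-\alpha/2}$. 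Indeed, the upper bound gives absolute continuity, and then Lebesgue differentiation transfers both bounds to the density almost everywhere. I will then redefine the density on a null set so that the bounds hold for every $y$.

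To prove the estimate on intervals I will use barriers, as the abstract suggests. For each shell $S=\{a<|w|<b\}$, the function $h(x)=\Pp^x(|X_{\tau_D}|\in(a,b))$ is harmonic in $D$ with exterior data $1_S$. The main construction is a global barrier: a radial function $\psi$ that is $C^2$ in $D$, satisfies $\psi\asymp\delta_D^{\alpha/2}$ near the boundary, and satisfies $\calL\psi\le -c$ (respectively a suitable subsolution) in $D$ uniformly over all admissible $A$.

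The natural candidate is $\psi(x)=(1-|x|^2)_+^{\alpha/2}$. Its image under any line $t\mapsto x+ta_i(x)$ is the one-dimensional profile $((1-|x|^2)-2t\langle x,a_i\rangle - t^2|a_i|^2)_+^{\alpha/2}$, which is an affine image of $(1-s^2)_+^{\alpha/2}$. So every one-dimensional fractional Laplacian along a line through the ball can be computed explicitly: $(-\Delta)^{\alpha/2}_{\R}(1-s^2)_+^{\alpha/2}$ is constant. This gives $\calL\psi(x)=-c\sum_i|a_i(x)|^{\alpha}$, which lies between negative constants, using $\eta_1$ for the upper bound on $|a_i|$ and $\eta_2$ (via $\det A\ge\eta_2$) for the lower bound.

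Since $\calL\psi$ is constant along each line, I can form $v=\psi\cdot(\text{exterior part})$ and compute $\calL$ applied to $1_S$-type data exactly along chords. For $x\in D$, the jump term is
\begin{equation*}
\calL 1_S(x)=\frac{\Aa}{2}\sum_i\int 1_S(x\pm a_iw)\,|w|^{-1-\alpha}\,dw.
\end{equation*}
Each line meets the shell in at most two intervals, and this gives a comparison of this quantity with $\int_a^b (y-|x|)^{-1-\alpha}\,dy$-type expressions.

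From there I will use the Ikeda--Watanabe formula. It gives
\begin{equation*}
\mu^x((a,b))=\Ee^x\int_0^{\tau_D}\nu(X_s,S)\,ds,
\end{equation*}
where $\nu(w,S)$ is the jump kernel mass of the shell. I then need two-sided bounds on $\nu(w,S)$ uniform in the direction. The lower bound uses a line in a direction $a_i$ transversal to the sphere; some column must make a nonzero angle with the radial direction, since $\det A\ge\eta_2$. The upper bound comes from $|a_i|\le\sqrt d\eta_1$.

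What remains is an estimate for the Green potential $\Ee^x\int_0^{\tau_D}F(X_s)\,ds$ of radial $F$ that blow up like $(y-1+\delta)^{-1-\alpha}$. I will obtain it from barriers of the form $\psi$ and $\psi^{\beta}$ together with Dynkin's formula, giving
\begin{equation*}
\Ee^x\tau_D\asymp\delta_D^{\alpha/2}(x)
\end{equation*}
and the corresponding weighted versions.

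I expect the main obstacle to be the Green-potential step. Near the boundary the kernel singularity and $\delta_D^{\alpha/2}$ must combine exactly into the factor $(y-1)^{-\alpha/2}(y-1+\delta_D(x))^{-1}$ appearing in $\varphi_D^x$. This needs a second barrier, which I would try to build as $\psi$ times a power of the distance to the exterior point, adapted to each $y$, to capture the $(y-1)^{-\alpha/2}$ blow-up of the density at the sphere.
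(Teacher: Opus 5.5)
\textbf{Verdict: genuine gap.} Your outer architecture (interval bounds, Lebesgue differentiation, Ikeda--Watanabe, barriers plus Dynkin's formula) fits the paper's. But the step you call the main obstacle is the whole content of the theorem, and the proposal does not supply it.

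\textbf{What is missing.} For a shell $S=\{r+\eps<|w|<r+\eps+\eta\}$ with $\eta\le\eps\le r/4$ you need $\Psi\ge 0$, vanishing off $D$ and $C^2$ in $D$, with $-\calL\Psi\approx\calL 1_S=\nu(\cdot,S)$ in $D$ and $\Psi(x)\approx\eta\,\delta_D^{\alpha/2}(x)\,\eps^{-\alpha/2}(\eps\vee\delta_D(x))^{-1}$. Then $b\Psi+1_S$ is super- or subharmonic for $b$ large or small, and the maximum principle gives the ring estimate.

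The barriers you name cannot do this. $\psi$ and $\psi^\beta$ have no $\eps$-dependent shape. For $r=1$, the smallest $c$ making $c\,\psi+1_S$ superharmonic gives a bound too large by the factor $\eps^{-\alpha/2}\max(1,\delta_D/\eps)$. They reproduce $\Ee^x\tau_D\approx(r^2-|x|^2)^{\alpha/2}$, which suffices for far rings (Lemma \ref{I-W}) but not near the sphere. There the target is of order $\eta\,\eps^{-\alpha/2}\delta_D^{\alpha/2-1}$ for $\delta_D\ge\eps$ (the profile of the $\calL_{e_d}$-harmonic $h$) and $\eta\,\eps^{-1-\alpha/2}\delta_D^{\alpha/2}$ for $\delta_D\le\eps$.

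The paper supplies exactly such an $\eps$-dependent object. It takes $f_\theta=(r^2-|x|^2)^{\alpha/2}\theta(|x|^2)$, with $\theta(v)=1/(r^2-v)$ for $v\le(r-\eps)^2$ and $\theta$ constant near $r^2$, so that $f_\theta-h$ lives in an annulus of width $\eps$. It then runs a long case analysis of $\calL_{e_d}f_\theta$ against $\calL_{e_d}1_S$ and applies the rotation argument of Lemma \ref{lemma_L_e_d}.

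\textbf{How your closing idea could be completed.} It works only with a specific radial choice that you do not make: $\Psi_\rho(x)=(r^2-|x|^2)_+^{\alpha/2}/(\rho^2-|x|^2)$ with $\rho=r+\eps$.
\begin{itemize}
\item On the chord $\tilde x+se_d$ it equals $\frac{1}{2S_3}(S_2^2-s^2)_+^{\alpha/2}\bigl[(S_3-s)^{-1}+(S_3+s)^{-1}\bigr]$. This is a combination of the one-dimensional Poisson kernels of $(-S_2,S_2)$ at $\pm S_3$, and $S_3^2-S_2^2=\rho^2-r^2$ does not depend on $\tilde x$.
\item Hence, with an explicit $c_\alpha>0$, one has exactly $-\calL_{e_d}\Psi_\rho(x)=c_\alpha(\rho^2-r^2)^{\alpha/2}S_3^{-1}\bigl(|S_3-x_d|^{-1-\alpha}+|S_3+x_d|^{-1-\alpha}\bigr)$.
\item For $\eta\le\eps$ this is comparable to $\frac{\eps^{\alpha/2}}{\eta r^{1-\alpha/2}}\calL_{e_d}1_S(x)$, and $\frac{\eta r^{1-\alpha/2}}{\eps^{\alpha/2}}\Psi_\rho$ has exactly the required size.
\item Superposing $\Psi_{\rho'}$ over $\rho'\in(r+\eps,r+\eps+\eta)$ with weight proportional to $\rho'(\rho'^2-r^2)^{-\alpha/2}$ even gives a $V$ with $\calL(V+1_S)=0$ in $D$.
\end{itemize}
A non-radial ``distance to an exterior point'' would lose the rotation reduction, which is what makes the arbitrary columns $a_i(x)$ harmless.

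\textbf{Two further steps are only asserted.}
\begin{itemize}
\item Two-sided bounds on intervals in $(r,\infty)$ say nothing about an atom of $|X_{\tau_D}-z|$ at $r$. Step (1) therefore needs its own argument; the paper uses Corollary \ref{Bxp} and the iteration in Proposition \ref{harmonic_measure_boundary}.
\item The uniform upper bound $\nu(w,S)\lesssim\eta(r+\eps-|w|)^{-1-\alpha}$ does not follow from $|a_i|\le\sqrt d\,\eta_1$ alone. A nearly tangential column meets $S$ in a longer segment, and you must check that its larger distance compensates.
\end{itemize}
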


\begin{remark}
\label{probability_density}
For each fixed $z \in \R^d$, $r > 0$, $x \in D = B(z,r)$ and any Borel set $W \subset [r,\infty)$ put 
\begin{equation}
\label{kappa_definition}
\kappa_{D}^x(W) = \Pp^x(|X_{\tau_{D}} - z| \in W). 
\end{equation}
Clearly, $\kappa_{D}^x$ is a probability measure on $[r,\infty)$. The function $f_{D}^x$ is a density of $\kappa_{D}^x$ that is $\kappa_{D}^x(W) = \int_W f_{D}^x(y) \, dy$ for any Borel set $W \subset [r,\infty)$.
\end{remark}

As far as we know, Theorem \ref{main_thm} provides the first sharp two-sided estimates for harmonic functions associated with nonlocal elliptic operators which are not translation invariant and whose Lévy measures are singular with respect to the Lebesgue measure on $\R^d$. The novelty of the result lies in obtaining explicit boundary decay rates for harmonic functions of $x$-dependent rectilinear fractional Laplacians in balls under minimal regularity assumptions on the coefficients and under radial exterior data.
Although we focus on balls and on harmonic functions, the construction of global barrier functions developed in this paper is flexible and suggests possible extensions to other classes of domains and equations. 

The regularity theory of harmonic functions for operators $\calL$ was initiated by Bass and Chen in \cite{BC2010}. In that work, H\"older continuity of harmonic functions was established, and it was shown that the Harnack inequality fails in this setting.

Subsequent developments considered broader classes of $x$-dependent nonlocal elliptic operators, including operators with singular kernels. Within this more general framework, weak Harnack inequalities and H\"older regularity results were obtained in \cite{CK2020, DK2020}. In parallel, heat kernel estimates for such operators were studied extensively; see, for example, \cite{KSV2018, KKS2021, KKR2022, KKK2022, KKR2024} and the references therein. More recently, Schauder-type and Cordes--Nirenberg-type estimates for solutions of $x$-dependent nonlocal elliptic equations with singular kernels were proved in \cite{FR2024}, focusing on local regularity properties of solutions. For the (translation invariant) rectilinear fractional Laplacians, Dirichlet heat kernel estimates in $C^{1,1}$ domains were established in \cite{CHZ2025}.

Despite this substantial body of work, sharp two-sided estimates describing the boundary behaviour of harmonic functions for $x$-dependent nonlocal elliptic operators with singular L\'evy measures appear to be unavailable, even in simple geometric domains such as balls.

The remainder of the paper is organized as follows.
Section 2 introduces the notation and collects some facts that will be needed later. Section 3 is devoted to the proof of our main result, Theorem \ref{main_thm}. The crucial step, the verification that two auxiliary functions $f_{b_1,\theta}$ and $F_{b_2,\Theta}$ are, respectively, superharmonic and subharmonic with respect to the process $X$ in the ball $B(z,r)$, is deferred to Section 4. Finally, in the Appendix we construct certain one-dimensional $C^2$ functions that underlie the definition of $f_{b,\theta}$.

\section{Preliminaries} 
We may assume that $\alpha \in (0,2)$ and $d \in \N$, $d \ge 2$ are fixed.

For $a, b \in \R$ we denote $a \wedge b = \min(a,b)$, $a \vee b = \max(a,b)$. We denote $\N = \{1, 2, 3, \ldots\}$.

Unless stated otherwise we assume that all constants are positive and may depend only on $\alpha$, $d$, $\eta_1$, $\eta_2$. We adopt the convention that constants may change their value from one use to the next. We write $f_1(x) \approx f_2(x)$ for $x \in U$ when there exist constants $c_1, c_2 > 0$ depending only on $\alpha$, $d$, $\eta_1$, $\eta_2$ such that for all $x \in U$ we have $c_1 f_1(x) \le f_2(x) \le c_2 f_1(x)$. Similarly, we write $f_1(x) \lesssim f_2(x)$ for $x \in U$ when there exists a constant $c > 0$ depending only on $\alpha$, $d$, $\eta_1$, $\eta_2$ such that for all $x \in U$ we have $f_1(x) \le c f_2(x)$.

For $z \in \R^d$, $r > 0$, by \cite{R2025}, we have
\begin{equation}
\label{exit_time_ball}
\Ee^x\tau_{B(z,r)} \approx (r^2-|x-z|^2)^{\alpha/2}, \quad \text{for $x \in B(z,r)$}.
\end{equation}

By \cite{BC2006}, the family $(\Pp^x)_{x\in\R^d}$ defines a strong Markov process $X$ on $\R^d$.
Moreover, for each $x\in\R^d$, the measure $\Pp^x$ solves the martingale problem for $\calL$.
The process $X$ is shown to have a version with c\`adl\`ag paths \cite{BC2006}. For the rest of this paper we consider that version. When 
 $D \subset \R^d$ is a bounded domain, it is known that $\Pp^x(\tau_D < \infty) = 1$ for every $x \in D$. 
To verify quasi left continuity of $X$, we apply \cite[Theorem 3.20]{D1965}.
For this, it suffices to check that for any $r>0$,
$$\lim_{t\to 0^+} \sup_{x\in \R^d}\Pp^x\left(|X_{t}-x|\ge r\right) = 0.$$ This is a consequence of \cite[Cor. 4.4]{BC2006} which  reads
$$\Pp^x\left(\sup_{0\le s\le t}|X_{s}-x|\ge r\right)\le C \frac t{1\wedge r^2}, \quad x\in \R^d, \ t>0.$$
Therefore, by \cite[Theorem 3.20]{D1965}, the process $X$ is quasi left continuous. 

Let $\{e_1, \ldots, e_d\}$ be the standard basis in $\R^d$. The L{\'e}vy measure of $Z$ is given by 
$\mu(U) = \Aa \sum_{i = 1}^d \int_{\R} 1_U(w e_i) |w|^{-1-\alpha} \, dw$, where $U \subset \R^d$ is Borel measurable. The jumping measure of $X$ is given by $\nu(x,U) = \mu(\{v \in \R^d: \, x + A(x) v \in U\})$, where $x \in \R^d$ is a starting point, and $U \subset \R^d$ is Borel measurable.

By standard arguments (see e.g. \cite[Proposition 2.3]{BL2002} or \cite[pages 39-40]{CK2003}) for any Borel sets $U, V \subset \R^d$ such that $\dist(U,V) > 0$ 
\begin{equation*}
\sum_{s \le t} 1_{\{X_{s-} \in U, \, X_s \in V\}} - 
\int_0^t 1_U(X_s) \nu(X_s,V) ds
\end{equation*}
is a $\Pp^x$ martingale for each $x \in \R^d$. Let $z \in \R^d$, $r > 0$ and put $D = B(z,r)$. It follows that for any $t > 0$, $x \in D$, any Borel set $U \subset D$ and any Borel set $V \subset \Rd$ such that $\dist(V,D) > 0$ we have
\begin{equation}
\label{Levy_system_2}
\Ee^x\left(\sum_{s \le t \wedge \tau_D} 1_{\{X_{s-} \in U, \, X_s \in V\}}\right) = \Ee^x\left(\int_0^{t \wedge \tau_D} 1_U(X_s) \nu(X_s,V) ds \right).
\end{equation}
For any bounded open set $H \subset \R^d$ and $x \in H$ denote the Green measure $G_H(x,\cdot)$ by
$$
G_H(x,U) = \Ee^x\left(\int_0^{\tau_H} 1_U(X_s) \, ds\right), \quad \text{for any Borel set $U \subset H$.} 
$$
By taking $U = D$ and letting $t \to \infty$ in (\ref{Levy_system_2}) we obtain for any $x \in D$ and Borel set $V \subset \Rd$ such that $\dist(V,D) > 0$
\begin{equation}
\label{Levy_system_3}
\Pp^x(X_{\tau_D} \in V) = \int_D  \nu(y,V) G_D(x,dy).
\end{equation}

For any $v \in \R^d \setminus \{0\}$, $x \in \R^d$ and $f:\R^d \to \R$ put
\begin{equation}
\label{calL_v}
\calL_v f(x) = \frac{\Aa}{2}  \int_{\R \setminus \{0\}} \left[f(x + v w) + f(x - v w) - 2f(x) \right] \, \frac{dw}{|w|^{1 + \alpha}}.
\end{equation}
We have
$$
\calL_v f(x) = p.v. \Aa \int_{\R} \left[f(x + v w) - f(x) \right] \, \frac{dw}{|w|^{1 + \alpha}}.
$$
 
We will need the following auxiliary result.
\begin{lemma}
\label{lemma_L_e_d}
Assume that $r > 0$, $f: \R^d \to \R$ is Borel, bounded, radial and $f \in C^2(B(0,r))$. 

If for any $x \in B(0,r)$ we have $\calL_{e_d} f(x) \le 0$ then for any $x \in B(0,r)$ we have $\calL f(x) \le 0$ (where $\calL$ is given by (\ref{operator_L})). 

Similarly, if for any $x \in B(0,r)$ we have $\calL_{e_d} f(x) \ge 0$ then for any $x \in B(0,r)$ we have $\calL f(x) \ge 0$. 
\end{lemma}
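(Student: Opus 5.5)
The plan is to reduce $\calL$ to a sum of one-directional operators $\calL_v$ and use radial symmetry together with the scaling of $\calL_v$ in $v$.

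By (\ref{operator_L}) and (\ref{calL_v}), $\calL f(x) = \sum_{i=1}^d \calL_{a_i(x)} f(x)$. So it suffices to show that for every $x \in B(0,r)$ and every $v \in \R^d\setminus\{0\}$ the sign of $\calL_v f(x)$ is controlled by the assumption on $\calL_{e_d}$. The columns $a_i(x)$ are nonzero because $\det A(x) \ge \eta_2 > 0$. Since $f \in C^2(B(0,r))$ and $f$ is bounded, each $\calL_v f(x)$ is an absolutely convergent integral, so the sum decomposition is legitimate.

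\textbf{Step 1 (scaling).} For $\lambda > 0$, the substitution $w' = \lambda w$ in (\ref{calL_v}) gives
\[
\calL_{\lambda v} f(x) = \lambda^{\alpha} \calL_{v} f(x).
\]
Hence the sign of $\calL_v f(x)$ depends only on the direction $v/|v|$, and we may assume $|v| = 1$.

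\textbf{Step 2 (rotation invariance).} Let $R$ be an orthogonal matrix with $R v = e_d$. Since $f$ is radial, $f(R y) = f(y)$ for all $y$. Therefore
\[
f(x \pm v w) = f(R x \pm e_d w),
\]
which gives $\calL_v f(x) = \calL_{e_d} f(R x)$. Because $|R x| = |x| < r$, the point $R x$ lies in $B(0,r)$. The hypothesis $\calL_{e_d} f \le 0$ on $B(0,r)$ then yields $\calL_v f(x) \le 0$.

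\textbf{Step 3 (summation).} Applying Steps 1 and 2 with $v = a_i(x)$ for each $i$ and summing gives $\calL f(x) \le 0$. The case $\calL_{e_d} f \ge 0$ is identical with the inequalities reversed.

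The only point needing care is well-definedness of each one-directional integral, so that splitting $\calL f(x)$ into the sum over $i$ is valid. Near $w = 0$, the $C^2$ regularity of $f$ in a neighbourhood of $x$ inside $B(0,r)$ bounds the second difference by $C w^2 |v|^2$. For large $|w|$, boundedness of $f$ gives integrability against $|w|^{-1-\alpha}$. The same reasoning applies at $R x$, which is again an interior point of $B(0,r)$.
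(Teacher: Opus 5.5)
Your proof is correct and is essentially the paper's argument: radiality gives $\calL_v f(x) = \calL_{e_d} f(Rx)$ for unit $v$, the scaling $\calL_{\lambda v} f = \lambda^{\alpha}\calL_v f$ removes the length of $v$, and summing over the columns $a_i(x)$ gives the sign of $\calL f(x)$. You just spell out the rotation and convergence details that the paper calls ``clear'' and ``standard''.
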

\begin{proof}
Assume that for any $x \in B(0,r)$ we have $\calL_{e_d} f(x) \le 0$ (the proof in the case when $\calL_{e_d} f(x) \ge 0$ is the same). Since $f$ is radial, it is clear that for any $v \in \R^d \setminus \{0\}$ and any $x \in B(0,r)$ we obtain $\calL_{v/|v|} f(x) \le 0$. By standard arguments for $v \in \R^d \setminus \{0\}$ and $x \in B(0,r)$ we have $\calL_{v/|v|} f(x) = |v|^{-\alpha} \calL_{v} f(x) \le 0$. It follows that $\calL f(x) \le 0$ for $x \in B(0,r)$.
\end{proof}

Put
\begin{equation}
\label{lambda_formula}
\lambda(y)= \left\{              
\begin{array}{lll}                   
(r^2 - |y|^2)^{\alpha/2},&\text{for}& y \in B(0,r), \\  
0,&\text{for}& y \in B^c(0,r)
\end{array}       
\right. 
\end{equation}
For $x \in B(0,r)$ we have (\cite{R2025})
\begin{equation}
\label{lambda_formula_generator}
\calL_{e_d} \lambda(x) = -\tilde{\calA}_{\alpha},
\end{equation}
where $\tilde{\calA}_{\alpha}= 2^{\alpha} \pi^{-1/2} \Gamma((1+\alpha)/2) \Gamma(1+\alpha/2)$.

Put
\begin{equation}
\label{h_function}
h(y) =  \left\{              
\begin{array}{lll}                   
(r^2 - |y|^2)^{\alpha/2 - 1},&\text{for}& y \in B(0,r), \\  
0,&\text{for}& y \in B^c(0,r).
\end{array}
\right. 
\end{equation}
One can show that for any $x \in B(0,r)$ we have
\begin{equation}
\label{h_function_generator}
\calL_{e_d} h(x) = 0.
\end{equation}

Indeed, for $v>0$ let $s_v(u)= (v^2-|u|^2)^{\alpha/2-1}_+, u\in \R$, then it is well known that the function $s_v$ is $\alpha$-harmonic 
in the interval  $(-v,v)$, so 
\begin{equation}
\label{s_function_generator}
p.v.\int_{\R} \left[s_v(u +  w) - s_v(u)\right] \, \frac{dw}{|w|^{1 + \alpha}}= 0, \quad u\in  (-v,v).
\end{equation}
		
	Next, for $x=(x_1, x_2, \dots, x_d)$, $|x|<r	$ we have $|x_d|< \sqrt{r^2-|\tilde{ x}|^2}$, where $\tilde{ x}=x-x_de_d$ and, by \eqref{s_function_generator}, we obtain
		\begin{eqnarray}
\calL_{e_d} h(x)&=&p.v.\ {\Aa}\int_{\R} \left[(r^2-|x +  we_d|^2)^{\alpha/2-1}_+ - (r^2-|x|^2)^{\alpha/2-1}_+\right] \, \frac{dw}{|w|^{1 + \alpha}}\nonumber\\
&=&p.v.\ {\Aa}\int_{\R} \left[(r^2-|\tilde{ x}|^2-|x_d +  w|^2)^{\alpha/2-1}_+ - (r^2-|\tilde{ x}|^2-|x_d|^2)^{\alpha/2-1}_+\right] \, \frac{dw}{|w|^{1 + \alpha}}\nonumber\\
&=&p.v.\ {\Aa}\int_{\R} \left[s_{\sqrt{r^2-|\tilde{ x}|^2}}(x_d +  w) - s_{\sqrt{r^2-|\tilde{ x}|^2}}(x_d)
\right] \, \frac{dw}{|w|^{1 + \alpha}}= 0.\nonumber\end{eqnarray}
This implies \eqref{h_function_generator}.

\section{Proof of the main result}
In this section we present the proof of our main result, Theorem \ref{main_thm}. The key steps are Theorem \ref{Lharmonic_in_U} and Proposition \ref{main_thm_for_small_rings}. Theorem \ref{Lharmonic_in_U} is a localized Dynkin formula in martingale form (stopped at $\tau_U$). This result may be of independent interest. Proposition \ref{main_thm_for_small_rings} concerns estimates of harmonic functions with respect to $X$ when Dirichlet exterior data are indicators of rings. The key idea to obtain Proposition \ref{main_thm_for_small_rings} is the construction of global barrier functions $f_{b_1,\theta} - u$ and $u-F_{b_2,\Theta}$ with respect to the $x$-dependent rectilinear fractional Laplacian for the ball $D$. Part of the proof of Proposition \ref{main_thm_for_small_rings}, namely Proposition \ref{f_b_estimate1}, is deferred to Section 4.
\begin{theorem}
\label{Lharmonic_in_U}
If $D \subset \R^d$ is an open set, $U$ is open and bounded, $\bar U\subset D$ and a function $f:\R^d \to \R$ is bounded and Borel and $f \in C_b^2(D)$ then
$$
f(X_{t \wedge \tau_U}) - f(X_0) - \int_0^{t \wedge \tau_U} \calL f(X_s) \, ds
$$
is a martingale under $\Pp^x$ for each $x\in U$.
\end{theorem}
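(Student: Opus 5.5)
We reduce to the martingale problem on $C_b^2(\R^d)$ and then remove the contribution of jumps that land in $D^c$. By \cite{BC2006}, $\Pp^x$ solves the martingale problem for $\calL$. Hence for every $\phi\in C_b^2(\R^d)$ the process
$$M^\phi_t=\phi(X_t)-\phi(X_0)-\int_0^t\calL\phi(X_s)\,ds$$
is a $\Pp^x$-martingale. By optional stopping at the bounded time $t\wedge\tau_U$, the stopped process $M^\phi_{t\wedge\tau_U}$ is also a martingale. The task is to pass from $\phi$ to $f$, which is only $C^2$ inside $D$ and merely bounded Borel outside.

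\textbf{Decomposition.} Choose an open $V$ with $\bar U\subset V$, $\bar V\subset D$, and $\dist(U,V^c)=:\rho>0$. Take a cutoff $\chi\in C_c^\infty(D)$ with $\chi=1$ on $V$, and write $f=\phi+\psi$, where $\phi=\chi f\in C_b^2(\R^d)$ and $\psi=(1-\chi)f$. The function $\psi$ is bounded Borel and vanishes on $V$. For $y\in U$ we have $\psi(y)=0$ and $\psi(y\pm a_i(y)w)=0$ whenever $|a_i(y)w|<\rho$. So the integral in (\ref{operator_L}) for $\psi$ converges absolutely and equals
$$\calL\psi(y)=\int_{\R^d}\psi(v)\,\nu(y,dv),$$
which is bounded on $U$ because $\nu(y,\{|v-y|\ge\rho\})\le c\rho^{-\alpha}$ uniformly, using $|a_{ij}|\le\eta_1$ and $\det A\ge\eta_2$ so that $|a_i(y)|\ge c>0$. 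By linearity, $\calL f=\calL\phi+\calL\psi$ on $U$.

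\textbf{Handling $\psi$.} Up to time $\tau_U$ the path stays in $U$. Hence $\psi(X_{s\wedge\tau_U})$ is zero for $s<\tau_U$ and can change only at the exit jump. This gives
$$\psi(X_{t\wedge\tau_U})-\psi(X_0)=\sum_{s\le t\wedge\tau_U}1_{\{X_{s-}\in U\}}\psi(X_s)\,1_{\{X_s\in V^c\}}.$$
Exits that land in $V\setminus U$ contribute zero, since $\psi=0$ there. One must also handle continuous exits, where $X_{\tau_U}\in\partial U\subset V$; these contribute zero as well. This uses quasi left continuity to ensure the exit happens either by a jump or by creeping, and either way the value lies in $V$ when no jump leaves $V$.

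Since $\dist(U,V^c)>0$, the L\'evy system martingale stated before (\ref{Levy_system_2}) applies. Extended from indicators $1_V$ to bounded Borel functions by a monotone class argument, and stopped at $\tau_U$, it shows that
$$\sum_{s\le t\wedge\tau_U}1_{\{X_{s-}\in U\}}\psi(X_s)-\int_0^{t\wedge\tau_U}\int\psi(v)\,\nu(X_s,dv)\,ds$$
is a $\Pp^x$-martingale. Here we may replace $1_U(X_s)$ by $1$ because $X_s\in U$ for $s<\tau_U$. The inner integral is $\calL\psi(X_s)$. Adding this martingale to the stopped $M^\phi$ yields the claim.

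\textbf{Main obstacle.} The main obstacle is justifying the L\'evy system identity for general bounded Borel $\psi$ up to the random time $\tau_U$, rather than for indicators over a deterministic horizon. First we would split $\psi$ into its positive and negative parts. Then we would approximate each by simple functions on $V^c$ and use monotone convergence together with the uniform bound on $\nu(y,V^c)$ for $y\in U$. The boundedness of that bound gives integrability, so the local martingales are true martingales on $[0,t]$.
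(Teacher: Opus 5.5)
Your argument is correct, and it takes a genuinely different route from the paper's. The paper never uses the martingale problem here. It applies a local It\^o formula to $f$ itself up to $t\wedge\tau_U$ and splits $Z$ into a small-jump part ($|\Delta Z|\le 1$) and a compound Poisson big-jump part. It shows the small-jump stochastic integral is a true martingale because its quadratic variation is integrable. It then compensates the remaining jump sum coordinate by coordinate, using that the components of $Z$ never jump together. The key input is the bound $|F_j(w,u)|\le C\min(|w|^2,1)$ on $\{u\le\tau_U\}$, which is exactly where $\bar U\subset D$ and the global boundedness of $f$ enter. In effect the paper re-derives the Bass--Chen generator computation in localized form.

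You localize the function instead. The smooth part $\chi f\in C^2_c(\R^d)$ goes to the global martingale problem plus optional stopping. The remainder $(1-\chi)f$ vanishes on a $\rho$-neighbourhood of $U$, so before $\tau_U$ it can only change at the exit jump, and the L\'evy system compensates it. Your route is shorter and makes transparent why $f$ may be merely bounded Borel off $D$. It uses only the martingale problem, the L\'evy system and (in your version) quasi-left continuity, so it does not depend on the SDE structure of $X$. The paper's route is self-contained from the SDE and It\^o's formula, and needs neither the L\'evy system nor quasi-left continuity at this stage.

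Two small corrections.

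\emph{The uniform bound on $\nu$.} The estimate $\nu(y,\{v:|v-y|\ge\rho\})\le c\rho^{-\alpha}$ comes from the upper bound $|a_i(y)|\le\sqrt d\,\eta_1$, not from $\det A\ge\eta_2$.

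\emph{Jumps from the boundary.} Your identity for $\psi(X_{t\wedge\tau_U})$, written with $1_{\{X_{s-}\in U\}}$, silently excludes a jump into $V^c$ from a point $X_{\tau_U-}\in\partial U$. Quasi-left continuity does rule this out, arguing as in the proof of Corollary~\ref{Lharmonic_in_D}. Take $U_n=\{y\in U:\dist(y,U^c)>1/n\}$; then $\tau_{U_n}\uparrow\tau_U$, and on $\{X_{\tau_U-}\in\partial U\}$ one has $\tau_{U_n}<\tau_U$ for all $n$. Hence $X_{\tau_U}=X_{\tau_U-}$ there. A cleaner fix is to use $1_{\{X_{s-}\in\bar U\}}$ in the L\'evy system, which is legitimate since $\dist(\bar U,V^c)=\rho>0$. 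Then the identity holds exactly and quasi-left continuity is not needed.
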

\begin{proof} We adapt the proof of Proposition 4.1 in \cite{BC2006}; the main additional point is to handle the merely local regularity assumption $f\in C_b^2(D)$, which requires a careful localization of the argument. Fix $x\in U$ and let $X$ be the weak solution to \eqref{main} under $\Pp^x$.
We apply the local It\^o formula (see, e.g., \cite[Lemma 3.8]{BKP2025}) to $f$, and obtain
\begin{eqnarray}
\nonumber
&& f(X_{t \wedge \tau_{U}} )-f(X_{0})\\
\nonumber
&=& 
\int_0^{t\wedge \tau_{U}}\nabla f(X_{u-})dX_u+\sum_{u\le t\wedge \tau_{U}, |\Delta X_u|>0 }f(X_{u} )-f(X_{{u-}})-\nabla f(X_{u-})\cdot\Delta X_u\\
\label{Ito}
&=& I(t)+J(t).\end{eqnarray}
%%%%%%%%%%%%%%%%%%%%%%%
We split $I(t)$ into the small jump part and the large jump part. Let $Z^1_t$ and $Z^2_t$ be two independent processes with L\'evy measures $\mu_1 = \mu|_{B_1}$ and $\mu_2=\mu-\mu_1$, respectively. Then we may assume that
$$Z_t= Z^1_t+Z^2_t \
\text{and}  \
X_t= X^1_t+X^2_t,
$$
where
%%%%%%%%%%%%%%%%%%%%
\begin{equation}
\label{main2}
dX^i_t = A(X_{t-}) \, dZ^i_t, \quad X_0 = x \in \R^d.
\end{equation}
It follows that 
$$I(t)= \int_0^{t\wedge \tau_{U}}\nabla f(X_{u-})dX^1_u+ \int_0^{t\wedge \tau_{U}}\nabla f(X_{u-})dX^2_u=I_1(t)+I_2(t).$$
Note that $Z_t^2$ is a compound Poisson process and its jumps are the big jumps of the process $Z_t$, hence 
%%%%%%%%%%%%%%%%%%%%%%%%%
 $$I_2(t) =\sum_{u\le t\wedge \tau_{U}, |\Delta Z_u|> 1 }\nabla f(X_{u-})\cdot\Delta X_u.$$
%%%%%%%%%%%%%%%%%%%%%%%%%%%%%%%%%%%%%%%%%%%%%%%%%%
Hence we can rewrite \eqref{Ito} as 
%%%%%%%%%%%%%%%%%%%%%%%%%%%%%%%%%%%%%%%%%%%%%%%%%%%%%%%%%%%%%%
\begin{eqnarray}\label{Ito1}f(X_{t \wedge \tau_{U}} )-f(X_{0})&=& 
I_1(t)+\sum_{u\le t\wedge \tau_{U}, |\Delta X_u|>0 }f(X_{u} )-f(X_{{u-}})-\nabla f(X_{u-})\cdot\Delta X_u \, 1_{\{|\Delta Z_u|\le 1\}}\nonumber\\&=& I_1(t)+J_1(t).\end{eqnarray}
%%%%%%%%%%%%%%%%%%%%%%%%%%%%%%%%%%%%%%%%%%%%%%%%%%%%%%%%%%
We claim that $I_1(t)$ is a martingale. This follows from the fact that its quadratic variation process $[I_1, I_1]_t$ is integrable (see e.g. \cite[Corollary 3, p.73]{P2005} ). Note that on the event  $u\le t\wedge \tau_{U} $
$$|\Delta X_u|=  |A(X_{u-}) \Delta Z_u|\le \sup_{x\in U} |A(x)||\Delta Z_u| $$  and
$$|\nabla f(X_{u-})|\le \sup_{x\in U}|\nabla f(x)|.$$
Hence,
\begin{eqnarray*}\Ee^x[I_1, I_1]_t&\le& \Ee^x\sum_{u\le t\wedge \tau_{U}, |\Delta Z_u|\le 1 } \max_{v\le t\wedge \tau_{U}}(|\nabla f(X_{v-})|)^2|\Delta X_u|^2\\
&\le& C \Ee^x\sum_{u\le t, |\Delta Z_u|\le 1 } |\Delta Z_u|^2\\
&=& Ct\int_{|x|\le 1} |x|^2\mu(dx)<\infty, \end{eqnarray*}
 which proves integrability of quadratic variation process.

%where 
%$$A_1 = \int_0^{t\wedge \tau_{r}}\nabla S(X_{u-})dX_u,$$
%$$A_2 =\sum_{u\le t\wedge \tau_{r}, 0<|\Delta X_u| }S(X_{u} )-S(X_{{u-}})-\nabla S(X_{u-})\cdot\Delta X_u I_{|\Delta X_u|\le 1}$$
%and
%$$A_3 =-\sum_{u\le t\wedge \tau_{r}, |\Delta X_u|> 1 }\nabla S(X_{u-})\cdot\Delta X_u$$
%Next, on the event  $\{u\le t\wedge \tau_{r}\}$ we have  $|\Delta X_u|\le 2$, hence 
%%%%%%%%%%%%%%%%%%%%%%%%%
Next, we observe that $J_1(t)$ can be rewritten as 
%%%%%%%%%%%%%%%%
$$J_1(t)=\sum_{u\le t\wedge \tau_{U}, |\Delta Z_u|>0 }f(X_{u-}+ A(X_{u-}) \Delta Z_u  )-f(X_{{u-}})-\nabla f(X_{u-})\cdot A(X_{u-}) \Delta Z_u \, 1_{\{|\Delta Z_u|\le 1\}}. $$
Since the coordinates cannot jump at the same time we can split $J_1(t)$ into $d$ parts
$$J_1(t)=\sum_{j=1}^d J^{j}_1(t),$$
where 
$$J^{j}_1(t)= \sum_{u\le t }g(u)\left[f(X_{u-}+ a_j(X_{u-}) \Delta Z^{(j)}_u  )-f(X_{{u-}})-\nabla f(X_{u-})\cdot a_j(X_{u-}) \Delta Z^{(j)}_u \, 1_{\{|\Delta Z^{(j)}_u|\le 1\}}\right] $$
with  $g(u)= 1_{\{u\le  \tau_{U}\}}$. 
%%%
Let, for $w\in \R, u>0$,
$$F_j(w,u)= g(u)\left[f(X_{u-}+ a_j(X_{u-})w  )-f(X_{{u-}})-\nabla f(X_{u-})\cdot a_j(X_{u-}) w 1_{\{|w|\le 1\}}\right].$$
Note that $g(u)$ is  a predictable process. Hence $F_j(w,u)$
 is  jointly measurable with respect to the product of Borel $\sigma$-field and a predictable 
$\sigma$-field generated by the process $Z^{(j)}$.  Note that 
$$\Ee\int_0^t \int_{\R}|F_j(w,u)|\frac 1{|w|^{1+\alpha}}dw<\infty.$$ This is justified by the following estimate
%%%
$$|F_j(w,u)|\le  C \min(|w|^2,1),$$
where $c=c(U,f)$. This follows from the fact that $\bar{U}\subset D$ and $f \in C_b^2(D)$.
%$$\left|f(x+y)-f(x)-\nabla f(x)\cdot y I_{|y|\le 1}\right|\le c \min(|y|^2,1)$$ if $x\in U $,
%where $c=c(U)$.

%Next, using L\'evy's system formula we have that
By the compensation formula
$$M_j(t)= F_j(\Delta Z^{(j)}_t,t)- \int_0^t \int_{\R}F_j(w,u)\frac{\Aa }{|w|^{1+\alpha}}dw du$$
is a martingale.

Next, note that $$M_j(t)= J^{j}_1(t)- \int_0^{t\wedge \tau_{U}}\calL_j  f(X_{{u}})du,$$
where
%%%%%%%%%%%%%%%%%%
$$\calL_j  f(x)= \Aa \int_{\R \setminus \{0\}} \left( f(x+a_j(x)w)-f(x)-\nabla f(x)\cdot a_j(x)w  
 1_{\{|w|\le1\}}\right)\frac{ dw}{|w|^{1+\alpha}}.$$ 

Summing up we see that $$J_1(t)-\int_0^{t\wedge \tau_{U}}\calL  f(X_{{u}}) du$$ is a martingale.
%%%%%%%%%%%%%%%%%%%%
Since $I_1(t)$ is a martingale, we arrive at the conclusion of the theorem.
\end{proof}

We say that a function $u$ that is Borel measurable and bounded in $\R^d$ is {\it superharmonic} with respect to $X$ in a bounded open set $D \subset \R^d$ if 
\begin{equation}
\label{superharmonic}
u(x) \ge \Ee^x(u(X_{\tau_D})) \quad \text{for every $x \in D$}
\end{equation} 
and is {\it subharmonic} if
\begin{equation}
\label{subharmonic}
u(x) \le \Ee^x(u(X_{\tau_D})) \quad \text{for every $x \in D$}.
\end{equation}

By Theorem \ref{Lharmonic_in_U} we obtain
\begin{corollary}
\label{Lharmonic_in_D}
Assume that $D \subset \R^d$ is a bounded open set, a function $f:\R^d \to \R$ is bounded and Borel and $f \in C_b^2(D)$. Then we have

(i) if $\calL f(x) = 0$ for any $x \in D$ then $f$ is harmonic with respect to $X$ on $D$, 

(ii) if $\calL f(x) \ge 0$ for any $x \in D$ then $f$ is subharmonic with respect to $X$ on $D$, 

(iii) if $\calL f(x) \le 0$ for any $x \in D$ then $f$ is superharmonic with respect to $X$ on $D$. 
\end{corollary}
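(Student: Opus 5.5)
The plan is to apply Theorem \ref{Lharmonic_in_U} on an exhausting family of open sets $U_n$ with $\bar U_n \subset D$, $U_n \uparrow D$, and then pass to the limit in $n$ and in $t$. Concretely, I would take $U_n = \{y \in D : \dist(y, D^c) > 1/n\}$ intersected with $B(0,n)$ if one wants boundedness for a general $D$; here $D$ is already bounded, so $U_n = \{y\in D: \dist(y,D^c)>1/n\}$ suffices.

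Fix $x \in D$; then $x \in U_n$ for all large $n$.

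\textbf{Part (iii).} Suppose $\calL f \le 0$ on $D$. By Theorem \ref{Lharmonic_in_U}, taking expectations of the martingale at time $t$ gives
$$\Ee^x f(X_{t\wedge\tau_{U_n}}) = f(x) + \Ee^x\int_0^{t\wedge\tau_{U_n}} \calL f(X_s)\,ds \le f(x),$$
because $X_s \in U_n \subset D$ for $s<\tau_{U_n}$. The integral is finite since $\calL f$ is bounded on $U_n$, by $f\in C^2_b(D)$, boundedness of $f$ and $\bar U_n\subset D$.

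Next let $t\to\infty$. Since $\tau_{U_n}\le\tau_D<\infty$ a.s. and $f$ is bounded, dominated convergence yields $\Ee^x f(X_{\tau_{U_n}})\le f(x)$.

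Then let $n\to\infty$. The stopping times $\tau_{U_n}$ increase to some $\tau^*\le\tau_D$. This is the main obstacle: I need $X_{\tau_{U_n}}\to X_{\tau_D}$ a.s., so that $\Ee^x f(X_{\tau_{U_n}}) \to \Ee^x f(X_{\tau_D})$, but $f$ is merely Borel outside $D$, so continuity of $f$ cannot be used. I would handle this with quasi-left continuity, established in Section 2. On the event $\{\tau_{U_n} = \tau_D \text{ for some } n\}$ the sequence is eventually constant, and there is nothing to prove. On the complementary event $\tau_{U_n}<\tau^*$ for all $n$, and quasi-left continuity gives $X_{\tau_{U_n}}\to X_{\tau^*}$ and $X_{\tau^*-}=X_{\tau^*}$ a.s. on $\{\tau^*<\infty\}$. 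Since $\dist(X_{\tau_{U_n}},D^c)\le 1/n$, we get $X_{\tau^*}\in\partial D$, hence $\tau^*=\tau_D$ and the exit is by continuous approach to $\partial D$. Therefore, outside a null set, for each $\omega$ either $X_{\tau_{U_n}}=X_{\tau_D}$ eventually, or $X_{\tau_D}\in\partial D$ with $X$ continuous at $\tau_D$.

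In the second case convergence of $f(X_{\tau_{U_n}})$ still requires some control, and I would try to show that this event has probability zero. The route is via the exit distribution through the Lévy system formula (\ref{Levy_system_3}): for a ball $D$, nondegeneracy of $A$ together with the exit time bound (\ref{exit_time_ball}) should give $\Pp^x(X_{\tau_D}\in\partial D)=0$. Alternatively, one restricts to the natural situation where the corollary is used (for example $f$ continuous on $\bar D$), in which case $f(X_{\tau_{U_n}})\to f(X_{\tau_D})$ directly. Assuming one of these, dominated convergence gives $\Ee^x f(X_{\tau_D})\le f(x)$, which is (iii).

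\textbf{Parts (ii) and (i).} Part (ii) follows by applying (iii) to $-f$. Part (i) follows by combining (ii) and (iii).
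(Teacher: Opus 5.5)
You follow the paper's route: exhaust $D$ by $D_n=\{y\in D:\dist(y,\partial D)>1/n\}$, apply Theorem \ref{Lharmonic_in_U}, and pass to the limit using quasi-left continuity. You also supply the $t\to\infty$ step, which the paper skips.

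You stop and hedge exactly where the paper's proof says nothing. The paper goes from $X_{\tau_{D_n}}\to X_{\tau_D}$ a.s. straight to $\Ee^x f(X_{\tau_{D_n}})\to\Ee^x f(X_{\tau_D})$. As you note, this tells you nothing on the event $\{\tau_{D_n}<\tau_D \text{ for all } n\}$, because $f$ is only Borel on $\partial D$.

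This is not a technicality that a cleverer argument removes: for general bounded open $D$ the statement is false. Take the following data:
\begin{itemize}
\item $d=2$, $\alpha\in(1,2)$;
\item $A$ constant with columns $(1,1)/\sqrt2$ and $(-1,1)/\sqrt2$, so $\det A=1$;
\item $K=\{0\}\times[-1,1]$, $D=B(0,2)\setminus K$, and $f=1+1_K$.
\end{itemize}
Then $f\equiv 1$ on $D$, and each line $x+a_i\R$ meets $K$ in at most one point, so $\calL f=0$ on $D$. However, $X^{(1)}_t=x_1+(Z^{(1)}_t-Z^{(2)}_t)/\sqrt2$ is a one-dimensional symmetric $\alpha$-stable process, and for $\alpha>1$ such a process hits points. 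By scaling, starting from $x=(\delta,0)$ with $\delta$ small, it reaches $\{X^{(1)}=0\}$ before $X$ moves distance $1/2$, with probability close to one. On that event $X_{\tau_D}\in K$, hence $\Ee^x f(X_{\tau_D})=1+\Pp^x(X_{\tau_D}\in K)>f(x)$.

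So an extra hypothesis is unavoidable, and your two patches are the right ones.

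\emph{Continuous data.} For the paper's actual use ($D=B(0,r)$ with $f=f_{b,\theta}$ or $F_{b,\Theta}$), the continuity patch already works. These functions are continuous on a neighbourhood of $\bar D$: the factor $(r^2-|x|^2)^{\alpha/2}$ kills the bounded $\theta,\Theta$ at $\partial D$, and $g$ vanishes on $\bar B(0,r+\eps)$. On the bad event, $X_{\tau_{D_n}}\in D$ converges to $X_{\tau_D}$, so your argument closes.

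\emph{Balls with arbitrary Borel exterior data.} Here the needed fact is Proposition \ref{harmonic_measure_boundary}, which the paper proves without using the corollary. Your one-line route to it is too thin. The L\'evy system formula (\ref{Levy_system_3}) requires $\dist(V,D)>0$ and only sees jumps, so at best it shows that $X$ does not jump exactly onto $\partial D$. The event you actually have to kill is continuous approach to $\partial D$, which that formula cannot see. The missing idea is the paper's: use the uniform lower bound $\Pp^x(X_{\tau_{B_x}}\in D^c)\ge p$ from Corollary \ref{Bxp}, which rests on Lemma \ref{large1} and (\ref{exit_time_ball}), for the balls $B_x=B(x,\dist(x,D^c)/3)$. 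Combined with the strong Markov iteration in the proof of Proposition \ref{harmonic_measure_boundary}, this gives remainders bounded by $(1-p)^k$. You should either invoke that argument or state the corollary only for balls, or only for $f$ continuous on $\bar D$.
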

\begin{proof}
We prove only (i), the proofs of (ii) and (iii) are similar and are omitted.

For any $n \in \N$ put $D_n = \{x \in D: \, \dist(x, \partial D) > 1/n\}$. For sufficiently large $n$ sets $D_n$ are not empty. For such $n$ by Theorem \ref{Lharmonic_in_U} we have
\begin{equation}
\label{Lharmonic_in_D_n}
f(x) = \Ee^x(f(X_{\tau_{D_n}})) \quad \text{for every $x \in D_n$.}
\end{equation}

Recall that $\tau_D < \infty$ a.s. Note that $\tau_{D_n} \le \tau_D$. Put $T = \lim_{n \to \infty} \tau_{D_n}$.  By quasi left continuity of the process $X$ we obtain $\lim_{n \to \infty} X_{\tau_{D_n}} = X_{T}$ a.s. Hence $X_{T} \in D^c$ a.s., so $T = \tau_D$ a.s. It follows that $\lim_{n \to \infty} X_{\tau_{D_n}} = X_{\tau_D}$ a.s. Using this and (\ref{Lharmonic_in_D_n}) we get
$$
f(x) = \Ee^x(f(X_{\tau_D})) \quad \text{for every $x \in D$,}
$$
so $f$ is harmonic with respect to $X$ on $D$.
\end{proof}

By (\ref{superharmonic}) and (\ref{subharmonic}) we obtain the following maximum principle.
\begin{corollary}
\label{maximum_principle}
Assume that $D \subset \R^d$ is a bounded open set, $f:\R^d \to \R$ is bounded and Borel. If $f$ is subharmonic with respect to $X$ on $D$ then we have
$$
f(x) \le \sup\{f(y): \, y \in D^c\} \quad \text{for every $x \in D$,}
$$
If $f$ is superharmonic with respect to $X$ on $D$ then we have
$$
f(x) \ge \inf\{f(y): \, y \in D^c\} \quad \text{for every $x \in D$.}
$$
\end{corollary}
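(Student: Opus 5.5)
The statement to prove is Corollary \ref{maximum_principle}, and it follows directly from the definitions (\ref{superharmonic}) and (\ref{subharmonic}) together with the fact that $X_{\tau_D}\in D^c$ almost surely.

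\emph{Subharmonic case.} Let $f$ be subharmonic and fix $x\in D$. Since $D$ is bounded, $\Pp^x(\tau_D<\infty)=1$, as recalled in the Preliminaries. By the definition of $\tau_D$ and right continuity of paths, $X_{\tau_D}\notin D$. To see this, note that if $X_{\tau_D}$ were in the open set $D$, then $X_s\in D$ for $s$ slightly larger than $\tau_D$. Also $X_s\in D$ for $s<\tau_D$. Together these would contradict the definition of the infimum. Hence $X_{\tau_D}\in D^c$ almost surely.

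Put $M=\sup\{f(y):\,y\in D^c\}$, which is finite because $f$ is bounded. Then $f(X_{\tau_D})\le M$ almost surely. Since $f$ is bounded and Borel, the expectation $\Ee^x f(X_{\tau_D})$ is well defined, and
$$
f(x)\le \Ee^x\bigl(f(X_{\tau_D})\bigr)\le M .
$$

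\emph{Superharmonic case.} This is identical, with the inequalities reversed and $\inf$ in place of $\sup$. Alternatively, apply the subharmonic case to $-f$.

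There is no real obstacle. The only point needing care is that $X_{\tau_D}\in D^c$ almost surely, which uses $\tau_D<\infty$ almost surely and the right continuity of the c\`adl\`ag version of $X$.
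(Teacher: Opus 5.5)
Your proof is correct and matches the paper, which derives the corollary directly from the definitions (\ref{superharmonic}) and (\ref{subharmonic}) with no further argument. Your justification that $X_{\tau_D}\in D^c$ almost surely, using $\tau_D<\infty$ a.s.\ and right continuity of paths, supplies the one detail the paper leaves implicit.
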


Now, fix $z \in \R^d$, $r > 0$ and put $D = B(z,r)$.
For any $r > 0$ and $\eps \in (0,r/4]$ let us define the following auxiliary class of functions.  
\begin{definition}
\label{Class_A_1}
We define $\calG(r,\eps)$ to be the set of all functions $\theta: \, [0,r^2) \to (0,\infty)$, which satisfy the following conditions. 

(i) \begin{equation*}
\theta(v)= \left\{              
\begin{array}{lll}                   
\displaystyle{\frac{1}{r^2 - v}},&\text{for}& v \in [0,(r-\eps)^2], \\  
C_0,&\text{for}& v \in [(r-\eps +K)^2,r^2),
\end{array}       
\right. 
\end{equation*}
for some $K \in (0,\eps/4]$ and $C_0 >0$, 

(ii) $\theta \in C^2[0,r^2)$,

(iii) $\theta$ is strictly increasing on $[(r-\eps)^2, (r-\eps +K)^2)$,

(iv) $\max\{\theta''(v): \, v \in [0,r^2)\} = \theta''((r-\eps)^2)$. 
\end{definition}
A construction of functions in $\calG(r,\eps)$ is given in the Appendix.

For any fixed $r > 0$ and $\eps \in (0,r/4]$ let us define the following auxiliary function.
\begin{equation} \label{Theta}
\Theta(v)= \left\{              
\begin{array}{lll}                   
\displaystyle{\frac{1}{r^2 - v}},&\text{for}& v \in [0,(r-\eps)^2], \\  
\frac{1}{r^2 - v}\left[1-\left(\frac{v-(r-\eps)^2}q\right)^3\right],&\text{for}& v  \in ((r-\eps)^2,r^2), 
\end{array}
\right.       
\end{equation}
where $q=r^2- (r-\eps)^2$.
It is straightforward to check that $\Theta \in C^2[0,r^2)$.

\begin{lemma}
\label{theta_estimates}
Let $\theta \in \calG(r,\eps)$. Then for any $v \in [0,r^2)$ we have
$$
\theta'(v) \le \frac{3}{q^2}, \quad \Theta'(v) \le \frac{3}{q^2}, \quad \theta(v) \le \frac{4}{q}, \quad \Theta(v) \le \frac{4}{q}.
$$
\end{lemma}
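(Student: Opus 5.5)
The plan is to reduce everything to the single length scale $q=r^2-(r-\eps)^2=\eps(2r-\eps)$, treating the regions $[0,(r-\eps)^2]$ and $((r-\eps)^2,r^2)$ separately. For $\Theta$ the transition region has an explicit formula. For $\theta$ we only know properties (i)--(iv) of Definition \ref{Class_A_1}, so we integrate the second-derivative bound (iv) across the transition interval.

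\emph{Common region $v\in[0,(r-\eps)^2]$.} Here $\theta(v)=\Theta(v)=1/(r^2-v)$. This function is increasing, and its derivative $1/(r^2-v)^2$ is also increasing. Hence both are maximal at $v=(r-\eps)^2$, where $r^2-v=q$. This gives $\theta,\Theta\le 1/q$ and $\theta',\Theta'\le 1/q^2$ on this region.

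\emph{$\Theta$ on $((r-\eps)^2,r^2)$.} Substitute $s=(v-(r-\eps)^2)/q\in(0,1)$, so that $r^2-v=q(1-s)$. Then
$$
\Theta(v)=\frac{1-s^3}{q(1-s)}=\frac{1+s+s^2}{q}\le\frac{3}{q},
$$
and, since $ds/dv=1/q$,
$$
\Theta'(v)=\frac{1+2s}{q^2}\le\frac{3}{q^2}.
$$
Both bounds are within the claimed ones.

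\emph{$\theta$ on $((r-\eps)^2,r^2)$.} On $[(r-\eps+K)^2,r^2)$ we have $\theta\equiv C_0$ and $\theta'=0$. By (iii) and continuity, $C_0=\max\theta$ on the whole region, so it suffices to control $\theta$ and $\theta'$ on the transition interval $I=[(r-\eps)^2,(r-\eps+K)^2]$.

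First we bound the length of $I$. Since $K\le\eps/4$ and $\eps\le r/4$, we have
$$
L:=|I|=K(2(r-\eps)+K)\le \frac{\eps}{4}\cdot 2r=\frac{\eps r}{2},
\qquad q=\eps(2r-\eps)\ge \frac{7\eps r}{4}.
$$
Hence $L\le 2q/7<q/2$.

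Next we use (iv) together with the $C^2$ matching at $(r-\eps)^2$ from (i)--(ii). These give $\theta''\le\theta''((r-\eps)^2)=2/q^3$ and $\theta'((r-\eps)^2)=1/q^2$. Integrating over $I$,
$$
\theta'(v)\le \frac1{q^2}+\frac{2}{q^3}\,L\le\frac{2}{q^2}\le\frac3{q^2}, \qquad v\in I.
$$
Integrating once more from $\theta((r-\eps)^2)=1/q$,
$$
\theta(v)\le \frac1q+\frac{2}{q^2}\,L\le\frac2q\le\frac4q, \qquad v\in I.
$$
This covers all of $[0,r^2)$.

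The only step with any real content is this last estimate for $\theta$. It is where the size restrictions $K\le\eps/4$ and $\eps\le r/4$ from Definition \ref{Class_A_1} enter, to guarantee $L\le q/2$. The argument also uses that the one-sided derivatives at $(r-\eps)^2$ agree with those of $1/(r^2-v)$, which follows from $\theta\in C^2$.
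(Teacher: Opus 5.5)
Your proof is correct and follows the same route as the paper. In both, (iv) and the $C^2$ matching give $\theta''\le 2/q^3$, and one integrates twice from $(r-\eps)^2$. The paper integrates over all of $((r-\eps)^2,r^2)$, whose length is exactly $q$, which gives the constants $3/q^2$ and $4/q$ directly without using $K\le\eps/4$ or the constant tail. Your restriction to the transition interval gives the sharper bounds $2/q^2$ and $2/q$, and your explicit formula $\Theta=(1+s+s^2)/q$ supplies the part the paper calls straightforward.
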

\begin{proof}
We will show the results for $\theta \in \calG(r,\eps)$, the corresponding bounds for $\Theta$ are straightforward. For $v \in [0,(r - \eps)^2]$ we clearly have $\theta'(v) = (r^2 - v)^{-2} \le 1/q^2$ and $\theta(v) = (r^2 - v)^{-1} \le 1/q$. For $v \in ((r - \eps)^2, r^2)$ we have
$$
\theta'(v) = \theta'((r - \eps)^2) + \int_{(r - \eps)^2}^v \theta''(t) \, dt \le 
\frac{1}{q^2} + \int_{(r - \eps)^2}^{r^2} \frac{2}{q^3} \, dt = \frac{3}{q^2}
$$
and
$$
\theta(v) = \theta((r - \eps)^2) + \int_{(r - \eps)^2}^v \theta'(t) \, dt \le 
\frac{1}{q} + \int_{(r - \eps)^2}^{r^2} \frac{3}{q^2} \, dt = \frac{4}{q}.
$$ \end{proof}

It follows that there exist absolute constants $c_1, c_2 > 0$ such that for any $\theta \in \calG(r,\eps)$ and $\Theta$  and all $v \in [0,r^2)$ we have
\begin{equation}
\label{theta_estimate}
\frac{c_1}{(r^2 - v) \vee q} \le \theta(v) \le \frac{c_2}{(r^2 - v) \vee q},
\end{equation}
\begin{equation}
\label{Theta_estimate}
\frac{c_1}{(r^2 - v) \vee q} \le \Theta(v) \le \frac{c_2}{(r^2 - v) \vee q}.
\end{equation}

\begin{proposition}
\label{main_thm_for_small_rings}
Let $\eps \in (0,r/4]$ and $\eta \in (0,\eps]$ and define the function $g:\Rd\mapsto \R$ by 
\begin{equation}
\label{g_definition}
g(y)= \left\{              
\begin{array}{lll}                   
1,&\text{for}& |y - z| \in (r+\eps,r+\eps+\eta), \\  
0, &\text{otherwise.}&
\end{array}       
\right. 
\end{equation}
Let $u$ be the function harmonic with respect to $X$ in a ball $D = B(z,r)$, which is given by (\ref{Dirichlet}). Then there exist constants $C_1, C_2$ such that for any $x \in D$ we have
\begin{equation}
\label{harmonic_measure_for_small_rings}
C_1 \int_{r+\eps}^{r+\eps+\eta} \varphi_D^x(y) \, dy \le u(x) \le C_2 \int_{r+\eps}^{r+\eps+\eta} \varphi_D^x(y) \, dy
\end{equation}
where $\varphi_D^x$ is given by (\ref{phi_formula}).
\end{proposition}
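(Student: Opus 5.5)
Throughout we take $z=0$, so $D=B(0,r)$ and $\delta_D(x)=r-|x|$. Since $\eps\le r/4$ and $\eta\le\eps$, for $y\in(r+\eps,r+\eps+\eta)$ we have $y\approx r$, $y-r\approx\eps$ and $y+\delta_D(x)-r\approx \eps+\delta_D(x)$. Hence the claim (\ref{harmonic_measure_for_small_rings}) is equivalent to
\begin{equation*}
u(x)\approx \frac{\eta\,\delta_D^{\alpha/2}(x)}{\eps^{\alpha/2}(\eps+\delta_D(x))},\qquad x\in D .
\end{equation*}
The key observation is that this profile is realized by the product $\lambda(x)\theta(|x|^2)$ with $\theta\in\calG(r,\eps)$, and likewise by $\lambda(x)\Theta(|x|^2)$. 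Indeed, $\lambda\theta$ coincides with $h$ away from the boundary (up to the factor $r^{\alpha/2}$-type constants). Moreover, by (\ref{theta_estimate}) it is comparable to $(r^2-|x|^2)^{\alpha/2}/((r^2-|x|^2)\vee q)$, and since $q\approx r\eps$ this is $\approx \delta_D^{\alpha/2}(x)\,r^{-\alpha/2}\,(\eps+\delta_D(x))^{-1}$. So I would build both barriers out of these products.

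For the upper bound, set
\begin{equation*}
f_{b_1,\theta}(x)=b_1\,\eta\,\eps^{-\alpha/2}r^{\alpha/2}\,\lambda(x)\theta(|x|^2)+g(x),
\end{equation*}
which equals $g$ on $D^c$ because $\lambda=0$ there. For $x\in D$ we have
\begin{equation*}
\calL g(x)=\nu(x,\{r+\eps<|y|<r+\eps+\eta\}).
\end{equation*}
I will bound this from above by $c\,\eta(\eps+\delta_D(x))^{-1-\alpha}$, possibly with an extra harmless factor. Each line $x+a_i(x)\R$ meets the ring in at most two segments. Their total $w$-length is $\lesssim\eta$ after accounting for the angle, using $|a_i|\le\eta_1$ and $|a_i|\gtrsim 1$ from $\det A\ge\eta_2$. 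Their distance from $x$ is $\gtrsim \eps+\delta_D(x)$.

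If one then proves the key estimate
\begin{equation*}
-\calL(\lambda\theta(|\cdot|^2))(x)\gtrsim \eps^{\alpha/2}r^{-\alpha/2}(\eps+\delta_D(x))^{-1-\alpha}\quad\text{on }D,
\end{equation*}
a large $b_1$ gives $\calL f_{b_1,\theta}\le0$ on $D$. Corollary \ref{Lharmonic_in_D}(iii) (note $f_{b_1,\theta}\in C^2_b(D)$ since $\theta\in C^2$ and $g=0$ on $D$) then yields $f_{b_1,\theta}(x)\ge \Ee^x f_{b_1,\theta}(X_{\tau_D})=\Ee^x g(X_{\tau_D})=u(x)$.

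By Lemma \ref{lemma_L_e_d} the key estimate reduces to estimating $\calL_{e_d}$ of the radial function $\lambda\theta(|\cdot|^2)$. The plan is to write $\theta(|x|^2)=(r^2-|x|^2)^{-1}$ plus a correction supported in the annulus $|x|\ge r-\eps$, and to use (\ref{h_function_generator}), which gives $\calL_{e_d}h=0$. The correction is $\lambda\cdot[\theta(|\cdot|^2)-(r^2-|\cdot|^2)^{-1}]$, a negative function concentrated near the boundary. Its one-dimensional operator $\calL_{e_d}$ at points with $\delta_D(x)\gg\eps$ is then $\approx -\eps^{\alpha/2}\cdot(\text{mass})/\text{dist}^{1+\alpha}$. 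For $\delta_D(x)\lesssim\eps$ I would use $\theta''\le\theta''((r-\eps)^2)$ (condition (iv)) together with (\ref{lambda_formula_generator}) and the fact that $\theta$ is constant near $r^2$, so that $\calL_{e_d}(\lambda\theta)\approx C_0\calL_{e_d}\lambda+\text{(error)}=-C_0\tilde\calA_\alpha+\dots$, with $C_0\approx 1/q$.

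For the lower bound I will use
\begin{equation*}
F_{b_2,\Theta}(x)=g(x)-b_2^{-1}\eta\,\eps^{-\alpha/2}r^{\alpha/2}\bigl[c\,h\text{-like term}-\lambda(x)\Theta(|x|^2)\bigr],
\end{equation*}
or more simply $g-b_2^{-1}(\cdots)\lambda\Theta$ with sign chosen so that the $\lambda\Theta$ part has $\calL$ bounded above in absolute value. What is needed here is the reverse estimates. First, $|\calL(\lambda\Theta)|\lesssim\eps^{\alpha/2}r^{-\alpha/2}(\eps+\delta_D)^{-1-\alpha}$. Second, $\nu(x,\text{ring})\gtrsim\eta(\eps+\delta_D(x))^{-1-\alpha}$; this follows because some column $a_i(x)$ has a radial component bounded below by a constant, which is where the lower bound $\det A\ge\eta_2$ is used. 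Choosing $b_2$ large gives $\calL F\ge0$ on $D$, and Corollary \ref{Lharmonic_in_D}(ii) yields $u\ge F$. The point of using $\Theta$ rather than $\theta$ is that $\Theta(v)\to0$ as $v\to r^2$ in an explicit cubic way, so $\lambda\Theta$ is $\approx$ the right profile and still has an explicitly computable generator.

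The main obstacle is the two-sided control of $\calL_{e_d}(\lambda\theta(|\cdot|^2))$ and $\calL_{e_d}(\lambda\Theta(|\cdot|^2))$ uniformly in $x\in D$, $\eps$ and $r$. This is especially delicate in the transition zone $\delta_D(x)\approx\eps$, where neither the exact formulas (\ref{lambda_formula_generator}) nor (\ref{h_function_generator}) apply directly, so a Taylor-type bound on the correction term is needed. This is the content I would isolate as a separate proposition, deferred as Proposition \ref{f_b_estimate1}.
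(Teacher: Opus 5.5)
Your architecture is the paper's: radial barriers $c\,\lambda\,\theta(|\cdot|^2)+g$ and $c\,\lambda\,\Theta(|\cdot|^2)+g$, Corollary \ref{Lharmonic_in_D} with the maximum principle, and the generator estimates deferred to Proposition \ref{f_b_estimate1}. However, several steps are wrong as written.

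\emph{Scaling.} Since $r^2-|x|^2\approx r\delta_D(x)$ and $q\approx r\eps$, (\ref{theta_estimate}) gives $\lambda(x)\theta(|x|^2)\approx r^{\alpha/2-1}\delta_D^{\alpha/2}(x)/(\eps+\delta_D(x))$, not $r^{-\alpha/2}\delta_D^{\alpha/2}(x)/(\eps+\delta_D(x))$. Also, $\lambda\theta=h$ exactly on $B(0,r-\eps)$, with no extra constants. So the prefactor must be $\eta\eps^{-\alpha/2}r^{1-\alpha/2}$, as in (\ref{f_b_definition}), and the key estimate must read $-\calL(\lambda\theta)(x)\gtrsim\eps^{\alpha/2}r^{\alpha/2-1}(\eps+\delta_D(x))^{-1-\alpha}$. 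Your two slips cancel in the final comparison, but the estimate you state is false for $\alpha\neq1$. Its left side scales like $\mathrm{length}^{-2}$ and its right side like $\mathrm{length}^{-1-\alpha}$. Concretely, it already fails at $x=0$ with $A\equiv I$ and $\eps=r/4$, for large $r$ if $\alpha<1$ and for small $r$ if $\alpha>1$.

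\emph{Lower barrier.} It must be $g+\kappa\,\eta\eps^{-\alpha/2}r^{1-\alpha/2}\lambda\,\Theta(|\cdot|^2)$ with a small \emph{positive} $\kappa$ (the paper's $b_2$). With a minus sign the barrier is negative on $D$, so $u\ge F$ says nothing. A version containing an ``$h$-like term'' is inadmissible, because $h$ is unbounded near $\partial D$. Only the one-sided bound $\calL(\lambda\Theta)\ge -C(\cdots)$ is needed. Also, $\Theta(v)\to 3/q$, not $0$, as $v\uparrow r^2$. What $\Theta$ buys is an explicit formula for $\lambda\Theta-h$ and convexity of $t\mapsto\Theta(|x+te_d|^2)$, which lets one discard the second-difference term.

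\emph{Reduction to $\calL_{e_d}$ and anisotropy.} Lemma \ref{lemma_L_e_d} transfers only signs, so it does not reduce your quantitative, direction-uniform bounds on $\calL(\lambda\theta)$, $\calL(\lambda\Theta)$, $\calL g$ to $\calL_{e_d}$. These quantities are genuinely anisotropic. By Lemma \ref{S_3_S_4_estimates} and (\ref{Case all}), both $\calL_{e_d}g$ and $-\calL_{e_d}f_\theta$ carry the factor $\frac{S_2^\alpha\vee q^{\alpha/2}}{q^{1+\alpha/2}}\bigl(\frac{\eps}{\delta_D(x)}\wedge1\bigr)^{1+\alpha}$. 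In directions tangent to the sphere through $x$ this is smaller than in the radial direction by a factor $\approx((\eps\vee\delta_D(x))/r)^{\alpha/2}$.

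Your isotropic route can be completed as follows. Write $\calL=\sum_i|a_i(x)|^\alpha\calL_{a_i(x)/|a_i(x)|}$ and use nonpositivity in every direction. Then pick a column with $|a_i(x)\cdot x|\ge c\,|a_i(x)|\,|x|$, $c=c(d,\eta_1,\eta_2)$, so that $S_2\approx r$ in that direction (cf.\ Lemma \ref{large1}).

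Whichever route you take, in such non-tangential directions with $\delta_D(x)\lesssim\eps$ one needs $-\calL_{e_d}(\lambda\theta)\gtrsim q^{-1}(r/\eps)^{\alpha/2}$. So the term $-\tilde{\calA}_\alpha\theta(|x|^2)\approx-1/q$ that you single out is not the main one, and a Taylor bound on the ``correction'' cannot supply the rest. The paper gets the needed size from Lemma \ref{under_S_1_estimates}, i.e.\ from the part of the line just inside $|y|=r-\eps$, at distance $\approx q/S_2$ from $x$.

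The paper avoids the column argument altogether. Because the anisotropic factors of $\calL_{e_d}f_\theta$ and $\calL_{e_d}g$ match, it proves $\calL_{e_d}f_{b_1,\theta}\le0\le\calL_{e_d}F_{b_2,\Theta}$ for the radial barriers themselves. Only signs then need transferring, $b_1,b_2$ depend only on $\alpha$, and $\det A\ge\eta_2$ is not used at this step.

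Finally, your bound $\calL g(x)\lesssim\eta(\eps+\delta_D(x))^{-1-\alpha}$ is true, but not because the $w$-length is $\lesssim\eta$. For nearly tangential lines the chords have length up to $\approx\eta\sqrt{r/\eps}$; the bound survives only because their distance from $x$ grows by the same factor.
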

\begin{proof} 
Without loss of generality we may assume that $z = 0$, so $D = B(0,r)$. Fix $\eps \in (0,r/4]$ and $\eta \in (0,\eps]$.

Let for a given function $\theta \in \calG(r,\eps)$
\begin{equation*}
f_{\theta}(x)= \left\{              
\begin{array}{lll}                   
\displaystyle{ (r^2 - |x|^2)^{\alpha/2} \theta(|x|^2)},&\text{for}& x \in B(0,r), \\  
0,&\text{for}& x \in B^c(0,r).
\end{array}       
\right. 
\end{equation*}

Now for any $b > 0$ and any function $\theta \in \calG(r,\eps)$ let us define an auxiliary function $f_{b,\theta}$ by the formula
\begin{equation}
\label{f_b_definition}
f_{b,\theta}(x)= \frac{b \eta r^{1- \alpha/2}}{\eps^{\alpha/2}}f_{\theta}(x)+g(x),\ x\in\R^d.                 
\end{equation}

Next, for the already defined  function $\Theta$, let
\begin{equation}
\label{f_definition2}
F_{\Theta}(x)= \left\{              
\begin{array}{lll}                   
\displaystyle{ (r^2 - |x|^2)^{\alpha/2} \Theta(|x|^2)},&\text{for}& x \in B(0,r), \\  
0,&\text{for}& x \in B^c(0,r).
\end{array}       
\right. 
\end{equation}

Similarly, for any $b > 0$  let us define an auxiliary function $F_{b,\Theta}$ by the formula

\begin{equation}
\label{F_b_definition}
F_{b,\Theta}(x)= \frac{b \eta r^{1- \alpha/2}}{\eps^{\alpha/2}}F_{\Theta}(x)+g(x),\ x\in\R^d.                 
\end{equation}
%\begin{equation}
%\label{F_b_definition}
%F_{b,\Theta}(x)= \left\{              
%\begin{array}{lll}                   
%\displaystyle{\frac{b \eta r^{1- \alpha/2}}{\eps^{\alpha/2}} (r^2 - |x|^2)^{\alpha/2} \Theta(|x|^2)},&\text{for}& x \in B(0,r), \\  
%g(x),&\text{for}& x \in B^c(0,r).
%\end{array}       
%\right. 
%\end{equation}

Clearly, for any $b > 0$  the functions $f_{b,\theta}$, $F_{b,\Theta}$ are bounded, Borel on $\R^d$ and $f_{b,\theta}, F_{b,\Theta} \in C_b^2(B(0,r))$. 

The key element of the proof of Theorem \ref{main_thm} is the following result.
\begin{proposition}
\label{f_b_estimate1}
Put $z = 0$, fix $r > 0$, $\eps \in (0,r/4]$ and $\eta \in (0,\eps]$. There exists a function $\theta \in \calG(r,\eps)$ (depending only on $r$, $\eps$ and $\alpha$)  and there exists a constant $b_1 > 0$ (depending only on $\alpha$) such that
$$
\calL f_{b_1,\theta}(x) \le 0 \quad \text{for any $x \in B(0,r)$.}
$$
There exist a constant $b_2 > 0$ (depending only on $\alpha$) such that
$$
\calL F_{b_2,\Theta}(x) \ge 0 \quad \text{for any $x \in B(0,r)$.}
$$
\end{proposition}

The proof of Proposition \ref{f_b_estimate1} is deferred to Section 4.
By Corollary \ref{Lharmonic_in_D} we obtain that $f_{b_1,\theta}$ is superharmonic with respect to $X$ in $B(0,r)$ and 
$F_{b_2,\Theta}$ is subharmonic with respect to $X$ in $B(0,r)$. Put $u_1 = f_{b_1,\theta} - u$ and $u_2 = F_{b_2,\Theta} - u$. It follows that $u_1$ is superharmonic with respect to $X$ in $B(0,r)$ and $u_2$ is subharmonic with respect to $X$ in $B(0,r)$. Note also that $u_1 \equiv u_2 \equiv 0$ on $B^c(0,r)$. In other words, $u_1$ and $-u_2$ are global barrier functions with respect to $x$-dependent rectilinear fractional Laplacian for the ball $B(0,r)$. By Corollary \ref{maximum_principle} $u_1 \ge 0$ on $B(0,r)$ and $u_2 \le 0$ on $B(0,r)$. Therefore we obtain
\begin{equation}
\label{f_b_1}
f_{b_1,\theta}(x) \ge u(x) \quad \text{for every $x \in B(0,r)$}
\end{equation}
and
\begin{equation}
\label{f_b_2}
F_{b_2,\Theta}(x) \le u(x) \quad \text{for every $x \in B(0,r)$.}
\end{equation}

By (\ref{phi_formula}) for $x \in B(0,r)$ we have
\begin{equation}
\label{phi_r_x_est}
\int_{r+\eps}^{r+\eps+\eta} \varphi_D^x(y) \, dy \approx \frac{\delta_D^{\alpha/2}(x) \eta}{\eps^{\alpha/2} (\eps \vee \delta_D(x))}.
\end{equation}

Note that for $x \in B(0,r)$ we have
$$
\delta_D^{\alpha/2}(x) \approx \frac{(r^2 - |x|^2)^{\alpha/2}}{r^{\alpha/2}}
$$
and
$$
\frac{1}{\delta_D(x) \vee \eps} = \frac{r}{(r(r-|x|)) \vee (r \eps)} \approx \frac{r}{(r^2 - |x|^2) \vee q}.
$$
Using this and (\ref{theta_estimate}), (\ref{Theta_estimate}) we obtain that for $x \in B(0,r)$
$$
\frac{1}{\delta_D(x) \vee \eps} \approx r \theta(|x|^2) \approx r \Theta(|x|^2).  
$$
By the above estimates we get that for $x \in B(0,r)$
$$
f_{b,\theta}(x) = \frac{b \eta r^{1- \alpha/2} (r^2 - |x|^2)^{\alpha/2} \theta(|x|^2)}{\eps^{\alpha/2}} 
\approx \frac{b \eta \delta_D^{\alpha/2}(x)}{\eps^{\alpha/2} (\eps \vee \delta_D(x))}.
$$
Similar estimate holds for $F_{b,\Theta}$. By this, (\ref{phi_r_x_est}), (\ref{f_b_1}), (\ref{f_b_2}) and the fact that constants $b_1$, $b_2$ depend only on $\alpha$ we obtain that there exist constants $C_1, C_2$ such that
$$
C_1 \int_{r+\eps}^{r+\eps+\eta} \varphi_D^x(y) \, dy \le u(x) \le C_2 \int_{r+\eps}^{r+\eps+\eta} \varphi_D^x(y) \, dy \quad \text{for every $x \in B(0,r)$.}
$$
\end{proof}

\begin{lemma}\label{large} Let $z_0\in \Rd$ and let $y\in \Rd$ be such that $|y-z_0|< r$ and $0<\eta<r<(4/5)R$, then
we have $$\mu(\{w \in \R^d: \,  R\le|y + A(y) w- z_0|\le R+\eta\})\approx \sum_{i=1}^d|a_i(y)|^\alpha   \frac \eta{R^{1+\alpha}}.$$ 
\end{lemma}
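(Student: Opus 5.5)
The measure $\mu$ is concentrated on the $d$ coordinate axes, so I will split the set coordinatewise:
$$
\mu(\{w: R\le |y+A(y)w-z_0|\le R+\eta\}) = \Aa\sum_{i=1}^d \int_{\R} 1_{\{R\le |y+a_i(y)s-z_0|\le R+\eta\}}\,\frac{ds}{|s|^{1+\alpha}},
$$
and prove the two-sided bound $\approx |a_i(y)|^{\alpha}\eta R^{-1-\alpha}$ for each summand separately. Summing over $i$ then gives the claim. First note that $|a_i(y)|>0$, since $\det A(y)\ge\eta_2$. Write $p=y-z_0$, with $|p|<r$, and $v=a_i(y)/|a_i(y)|$. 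Substituting $t=|a_i(y)|s$ turns the $i$th integral into
$$
|a_i(y)|^{\alpha}\int_{\R} 1_{\{R\le |p+tv|\le R+\eta\}}\,\frac{dt}{|t|^{1+\alpha}}.
$$
This reduces the problem to a one-dimensional geometric estimate along the line $\{p+tv: t\in\R\}$ through a point inside $B(0,r)$.

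\textbf{The set of $t$.} The function $\phi(t)=|p+tv|^2=t^2+2t\,p\cdot v+|p|^2$ is a convex quadratic. The set $\{R^2\le \phi(t)\le (R+\eta)^2\}$ therefore consists of exactly two intervals, one on each side of the minimum point $t_0=-p\cdot v$, where $|t_0|<r$. On the right branch, the root of $\phi=\rho^2$ is $t_+(\rho)=t_0+\sqrt{\rho^2-|p|^2+t_0^2}$. Its derivative is
$$
\frac{dt_+}{d\rho}=\frac{\rho}{\sqrt{\rho^2-|p|^2+t_0^2}}.
$$
For $\rho\in[R,R+\eta]$ we have $|p|<r<\tfrac45 R\le \tfrac45\rho$, so the square root lies between $\tfrac35\rho$ and $\rho\sqrt{1+16/25}$. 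Hence the derivative is comparable to $1$, and the interval $[t_+(R),t_+(R+\eta)]$ has length $\approx\eta$. By the same argument, the left interval also has length $\approx\eta$.

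\textbf{The weight on each interval.} On these intervals $|p+tv|\in[R,R+\eta]$, so
$$
|t|\le |p|+R+\eta\le \tfrac45R+R+\tfrac45R\lesssim R.
$$
In the other direction, $|t|\ge R-|p|\ge R/5$. So $|t|\approx R$, and the weight $|t|^{-1-\alpha}$ is comparable to $R^{-1-\alpha}$ uniformly there. Multiplying the length $\approx\eta$ by the weight $\approx R^{-1-\alpha}$ gives each one-dimensional integral $\approx \eta R^{-1-\alpha}$.

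All constants here are absolute, or depend only on $\alpha$ and $d$. The only mildly delicate step is the uniform comparability of the derivative of the root, which is exactly where the hypothesis $r<\tfrac45R$ is used. Apart from that, the argument is a routine change of variables.
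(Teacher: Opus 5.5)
Your proposal is correct and follows essentially the same route as the paper. Both arguments split $\mu$ along the coordinate axes and solve the quadratic $|y-z_0+a_i(y)v|=\rho$ explicitly. Both then use $|y-z_0|<r<\frac45 R$ to show that each of the two root intervals has length comparable to $\eta/|a_i(y)|$: you do this through the derivative $dt_+/d\rho$, while the paper rationalizes $v_+(R+\eta)-v_+(R)$ directly. Both finish by bounding $|v|$ above and below by multiples of $R/|a_i(y)|$ via the triangle inequality. The normalization $t=|a_i(y)|s$, which extracts the factor $|a_i(y)|^{\alpha}$ at the start, is only a cosmetic difference.
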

\begin{proof} We may assume that $z_0=0$. Fix $i\in \{1,2,\dots,d\}$. Let  $ a= a_i= a_i(y)$ and $v$ be real.
We would like to solve $|y+av|=R$.  This is equivalent to the following square equation  $|y+av|^2=R^2$ which reads

$$|y|^2+2v(y\cdot a)+|a|^2v^2=R^2.$$
Denote $\lambda^2 = |y|^2- (a/|a|\cdot y)^2$.
Then we can write the positive solutions as  
\begin{equation}
\label{vplus}
v_{+}(R)=\frac1{|a|}\left( \sqrt {R^2-\lambda^2 }- (a/|a|\cdot y)\right),
\end{equation}
while the negative solution is
\begin{equation}
\label{vminus}
v_{-}(R)=\frac1{|a|}\left( -\sqrt {R^2-\lambda^2 }- (a/|a|\cdot y)\right).
\end{equation}
Hence
\begin{eqnarray*}v_{+}(R+\eta)-v_{+}(R)&=&\frac1{|a|}\left(\sqrt {(R+\eta)^2-\lambda^2 }- \sqrt {R^2-\lambda^2 })\right)\\
&=&\frac1{|a|}\frac {(R+\eta)^2-R^2}{\sqrt {(R+\eta)^2-\lambda^2 }+ \sqrt {R^2-\lambda^2 })}
\end{eqnarray*}
Observe that for $|y|\le r\le (4/5)R$ we have 
$$\frac35(R+\eta)\le \sqrt{(R+\eta)^2-\lambda^2 }+ \sqrt {R^2-\lambda^2 }\le (R+\eta)+R.$$
which implies 
$$\frac\eta {|a|}\le v_{+}(R+\eta)-v_{+}(R)\le 4\frac\eta {|a|}. $$ 
Similarly 
$$\frac\eta {|a|}\le v_{-}(R)-v_{-}(R+\eta)\le 4\frac\eta {|a|}. $$ 
Next, if 
$R\le |y+av|\le R+\eta$, by the triangle inequality, we have
$$\frac {R-r}{|a|} \le |v|\le \frac{R+ 2r}{|a|}.$$
Let $P_i= \{v: \, R\le  |y +va_i|\le R+\eta \}= [v_{-}(R+\eta), v_{-}(R)]\cup [v_{+}(R),v_{+}(R+\eta)]$. Using the above inequalities
we obtain
$$\int_{P_i}\frac1{|v|^{1+\alpha}}dv\approx |a_i|^\alpha \frac \eta{R^{1+\alpha}}$$
which yields that
$$\mu(\{w \in \R^d: \, R\le|y +wA(y)|\le R+\eta\})  = \Aa\sum_{i = 1}^d\int_{P_i}\frac1{|v|^{1+\alpha}}dv
\approx  \sum_{i=1}^d|a_i|^\alpha  \frac\eta{R^{1+\alpha}}.$$
\end{proof}

\begin{lemma}\label{I-W} Let $z_0\in \Rd$ and let $x\in \Rd$ be such that $|x-z_0|< r$ and $0<\eta<r<(4/5)R$, then
$$\Pp^x(R\le|X_{\tau_{B(z_0,r)}}-z_0|\le R+\eta)\approx (r^2-|x-z_0|^2)^{\alpha/2}\frac \eta{R^{1+\alpha}}.$$ 
\end{lemma}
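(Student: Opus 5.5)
Set $D' = B(z_0,r)$ and $V = \{w: R \le |w - z_0| \le R+\eta\}$; we may take $z_0 = 0$. Since $R > (5/4) r$, we have $\dist(V,D') \ge R - r \ge r/4 > 0$. The Lévy system identity (\ref{Levy_system_3}) therefore applies and gives
$$
\Pp^x(X_{\tau_{D'}} \in V) = \int_{D'} \nu(y,V)\, G_{D'}(x,dy).
$$
The plan is to estimate $\nu(y,V)$ uniformly in $y \in D'$, and then use the total mass of the Green measure.

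By definition, $\nu(y,V) = \mu(\{w: R \le |y + A(y)w| \le R+\eta\})$. Lemma \ref{large} applies to every $y$ with $|y| < r$ (the hypotheses $0<\eta<r<(4/5)R$ are those of the present statement). It gives
$$
\nu(y,V) \approx \sum_{i=1}^d |a_i(y)|^\alpha \frac{\eta}{R^{1+\alpha}}.
$$
We then bound $\sum_i |a_i(y)|^\alpha$ above and below by constants depending only on $\alpha,d,\eta_1,\eta_2$. The upper bound is immediate, since $|a_i(y)| \le \sqrt d\, \eta_1$. For the lower bound we use Hadamard's inequality:
$$
\eta_2 \le \det A(y) \le \prod_i |a_i(y)| \le (\sqrt d \eta_1)^{d-1} \max_i |a_i(y)|,
$$
so $\max_i |a_i(y)| \ge \eta_2 (\sqrt d\eta_1)^{1-d}$. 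Hence $\nu(y,V) \approx \eta R^{-1-\alpha}$ uniformly for $y \in D'$.

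Inserting this into the Lévy system identity gives
$$
\Pp^x(X_{\tau_{D'}} \in V) \approx \frac{\eta}{R^{1+\alpha}}\, G_{D'}(x,D') = \frac{\eta}{R^{1+\alpha}}\, \Ee^x \tau_{D'}.
$$
By (\ref{exit_time_ball}), $\Ee^x\tau_{D'} \approx (r^2 - |x|^2)^{\alpha/2}$, which finishes the argument.

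The only step that needs care is the uniform lower bound on $\sum_i |a_i(y)|^\alpha$; Hadamard's inequality combined with $\det A \ge \eta_2$ handles it. One technical point remains: (\ref{Levy_system_3}) was stated for $D = B(z,r)$ with a set $V$ at positive distance from $D$, which is exactly our situation. The set $V$ is closed here, but this causes no difficulty because positive distance is all that is required.
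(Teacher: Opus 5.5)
Your proposal is correct and follows the paper's own proof essentially step for step: the L\'evy system identity \eqref{Levy_system_3}, then Lemma \ref{large} to get $\nu(y,V)\approx \eta R^{-1-\alpha}$ uniformly on the ball, then $G_{D'}(x,D')=\Ee^x\tau_{D'}$ together with \eqref{exit_time_ball}. Your one addition is to spell out, via Hadamard's inequality and $\det A(y)\ge\eta_2$, the uniform two-sided bound on $\sum_i|a_i(y)|^\alpha$; the paper uses this bound implicitly when it passes from Lemma \ref{large} to $\nu(y,P)\approx \eta/R^{1+\alpha}$.
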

\begin{proof} We may assume $z_0 = 0$. Denote $D= B(0,r)$ and $P= \{w\in \Rd: \, R\le |w|\le R+\eta\}.$ By (\ref{Levy_system_3})
we have 
$$\Pp^x(X_{\tau_{D}}\in P)=\int_D \nu(y,P) G_D(x, dy).$$
By  Lemma \ref{large} we have that for all $y\in D$
$$C_2\frac \eta{R^{1+\alpha}}\le \nu(y,P)\le C_1 \frac \eta{R^{1+\alpha}}.$$
Using this and  (\ref{exit_time_ball}) we get
$$
\Pp^x(X_{\tau_{D}}\in P)\approx \int_ D \frac \eta{R^{1+\alpha}} G_D(x, dy) 
= \frac \eta{R^{1+\alpha}} \Ee^x(\tau_D) 
\approx (r^2-|x|^2)^{\alpha/2} \frac \eta{R^{1+\alpha}},
$$
which gives the assertion of the lemma.
\end{proof}

The following lemma is crucial for the proof of Proposition \ref{harmonic_measure_boundary}.
\begin{lemma} \label{large1}Let $z_0\in \Rd$ and  let $x\in \Rd$ be such that $|x-z_0|< R$ and 
$r_x=\frac13(R-|x-z_0|)$. There exists a constant $c$ such that for $y\in B(x,r_x)$
we have 
$$
\mu(\{w \in \R^d: \, |y-z_0 +wA(y)|\ge R\})\ge   \frac{c}{(R-|x - z_0|)^{\alpha}}.
$$ 
\end{lemma}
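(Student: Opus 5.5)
Assume $z_0=0$ and write $\rho=R-|x|=3r_x$. The measure $\mu$ charges only the coordinate lines $\{we_i\}$, so
$$\mu(\{w:\,|y+A(y)w|\ge R\})=\Aa\sum_{i=1}^d\int_{\{v:\,|y+va_i(y)|\ge R\}}\frac{dv}{|v|^{1+\alpha}}.$$
It therefore suffices to find one index $i$ (possibly depending on $y$) for which the one-dimensional integral is $\gtrsim\rho^{-\alpha}$. I would work with a single column $a=a_i(y)$ and follow the line $v\mapsto y+va$ until it leaves $B(0,R)$.

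First I record uniform bounds on the columns. Since $|a_{ij}|\le\eta_1$, we have $|a_i(y)|\le\sqrt d\,\eta_1$. Hadamard's inequality gives $\eta_2\le\det A(y)\le\prod_j|a_j(y)|\le|a_i(y)|(\sqrt d\eta_1)^{d-1}$, so $|a_i(y)|\ge c_0>0$ for every $i$. Hence $|a_i|\approx1$ with constants depending only on $d,\eta_1,\eta_2$.

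Next I use the exit points from the proof of Lemma \ref{large}. The line leaves $B(0,R)$ at the parameters $v_\pm(R)$ given by (\ref{vplus})–(\ref{vminus}), so $\{v:\,|y+va|\ge R\}=(-\infty,v_-(R)]\cup[v_+(R),\infty)$. This gives
$$\int_{\{|y+va|\ge R\}}\frac{dv}{|v|^{1+\alpha}}\ge\frac{1}{\alpha}\,\min\{|v_+|,|v_-|\}^{-\alpha}.$$
It remains to bound $\min(|v_+|,|v_-|)\lesssim\rho$ for at least one column. Note $|y|\ge|x|-r_x=R-\frac43\rho$.

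The main obstacle is that a single direction $a$ can be nearly tangent to the sphere of radius $|y|$. In that case the chord distance $\sqrt{R^2-\lambda^2}-|a\cdot y|/|a|$ is of order $\sqrt{R\rho}$ rather than $\rho$, which is much larger. I avoid this by choosing the column well. The columns $a_1,\dots,a_d$ span $\R^d$ with a quantitative bound: since $|\det A(y)|\ge\eta_2$ and the columns have norm at most $\sqrt d\eta_1$, some column satisfies $|a_i\cdot\frac{y}{|y|}|\ge c_1>0$. Otherwise the unit vector $y/|y|$ would be almost orthogonal to all columns. Then $A^T(y/|y|)$ would be small, which contradicts $|A^{-1}|\lesssim1$; this operator-norm bound follows from the cofactor formula together with $\det A\ge\eta_2$ and $|a_{ij}|\le\eta_1$.

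For such $i$, set $e=a/|a|$ and $s=e\cdot y$, so $|s|\ge c_1|y|/(\sqrt d\eta_1)$. Choosing the sign of $v$ that moves outward, the exit parameter is
$$|v|=\frac{1}{|a|}\Bigl(\sqrt{R^2-|y|^2+s^2}-|s|\Bigr)=\frac{R^2-|y|^2}{|a|\bigl(\sqrt{R^2-|y|^2+s^2}+|s|\bigr)}\le\frac{R^2-|y|^2}{|a|\,|s|}.$$
Consider first the case $|y|\ge R/2$. Then $R^2-|y|^2\le2R(R-|y|)\le2R\cdot\frac43\rho$ and $|s|\gtrsim R$, so $|v|\lesssim\rho$. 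In the remaining case $|y|<R/2$ we have $\rho\ge\frac34(R-|y|)\ge\frac38R$. There the trivial bound $|v|\le(R+|y|)/|a|\lesssim R\lesssim\rho$ holds for any column. Combining both cases gives the integral bound $\gtrsim\rho^{-\alpha}=(R-|x|)^{-\alpha}$, with constants depending only on $\alpha,d,\eta_1,\eta_2$.
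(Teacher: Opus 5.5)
Your proof is correct and follows essentially the same route as the paper: the exit parameters $v_\pm(R)$ from Lemma \ref{large}, the bound $\int_{Q_i}|v|^{-1-\alpha}\,dv\ge \frac{1}{\alpha}\min(|v_+|,|v_-|)^{-\alpha}$, the estimate $R^2-|y|^2\le 2R(R-|y|)\le\frac83 R(R-|x|)$, and a case split at $R/2$ (the paper splits on $|x|$, you split on $|y|$, which works equally well). The only difference is cosmetic: the paper sums over all columns and uses the constant $\min_{y,|z|=1}\sum_i|a_i(y)\cdot z|^\alpha>0$, asserting its positivity from the assumptions on $A$. You instead select one column with $|a_i(y)\cdot y/|y||\ge c_1$, and you actually justify this (and $|a_i|\ge c_0$) via the cofactor bound on $\|A^{-1}\|$ and Hadamard's inequality.
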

\begin{proof} We may assume that $z_0=0$. Fix $i\in \{1,2,\dots,d\}$. Let  $y\in B(x,r_x)$, $a= a_i= a_i(y)$, $v$ be real and put 
$\lambda^2 = |y|^2-(a/|a|\cdot y)^2$.
Then as in the proof of Lemma \ref{large} we have that 
$$ Q_i= \{v: |y +a_i v|\ge R\}= (-\infty, v_{-}(R)]\cup [v_{+}(R), \infty),$$
where $v_{+}(R)$, $v_{-}(R)$ are given by (\ref{vplus}), (\ref{vminus}) respectively.
Let $$v(R)= | v_{-}(R)| \wedge | v_{+}(R)|= \frac1{|a|}\frac {R^2-|y|^2}{ \sqrt {R^2-\lambda^2 }+|(a/|a|\cdot y)|}.  $$
Then 
$$\int_{Q_i}\frac1{|v|^{1+\alpha}}dv\ge \int_{v(R)}^\infty\frac1{|v|^{1+\alpha}}dv=\frac1{\alpha (v(R))^\alpha}= |a|^\alpha  \frac { (\sqrt {R^2-\lambda^2 }+|(a/|a|\cdot y)|)^\alpha}{\alpha(R^2-|y|^2)^\alpha}.$$

First we consider the case $|x|>(1/2)R$. Then we have $R-|y|\le (4/3) (R-|x|)$ which implies that
$$R^2-|y|^2\le 2R(R-|y|)\le \frac83R(R-|x|).$$
This together with
$$(\sqrt {R^2-\lambda^2 }+|(a/|a|\cdot y)|)^\alpha\ge |(a/|a|\cdot y)|^\alpha$$ and $|y|>(1/3)R$
yields
\begin{equation}\label{w_est}\int_{Q_i}\frac1{|v|^{1+\alpha}}dv\ge \left(\frac38\right)^\alpha   \frac { |y|^\alpha|(a\cdot y/|y|)|^\alpha}{\alpha R^\alpha(R-|x|)^\alpha}\ge \left(\frac18\right)^\alpha   \frac { |(a\cdot y/|y|)|^\alpha}{\alpha (R-|x|)^\alpha}. \end{equation}

In the case $|x|\le(1/2)R$ we have $R\le 2(R-|x|)$,  so
\begin{equation}\label{w_est1}\int_{Q_i}\frac1{|v|^{1+\alpha}}dv\ge  |a|^\alpha  \frac { (\sqrt {R^2-\lambda^2 }+|(a/|a|\cdot y)|)^\alpha}{\alpha(R^2-|y|^2)^\alpha}\ge  \left( \frac12\right)^{\alpha} \frac {|a|^\alpha}{\alpha (R-|x|)^{\alpha}}.\end{equation}

By \eqref{w_est} and \eqref{w_est1} we obtain 
$$\mu(\{w \in \R^d: \, |y +A(y)w|\ge R\})  = \Aa\sum_{i=1}^d\int_{Q_i}\frac1{|v|^{1+\alpha}}dv
\ge     \frac{c}{(R-|x|)^\alpha},$$
where  $c=\frac\Aa\alpha \left(\frac18\right)^\alpha \min_{y \in \R^d,|z|=1}\sum_{i=1}^d  { |(a_i(y)\cdot z)|^\alpha}>0$. Note that positivity of $c$ and its dependence only on $\alpha, d, \eta_1, \eta_2$ follows from the assumptions on the family of matrices $A(y), y\in \Rd$. 
\end{proof}

\begin{corollary}
\label{Bxp}
Fix $z \in \R^d$, $r > 0$ and put $D = B(z,r)$. For $x \in D$ let $r_x = \frac{1}{3}\dist(x,D^c)$ and $B_x = B(x,r_x)$. There exists a constant $p$ such that for any $x \in D$ we have
$$
\Pp^x(X_{\tau_{B_x}} \in D^c) \ge p.
$$ 
\end{corollary}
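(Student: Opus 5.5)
We plan to obtain the bound from the Lévy system formula (\ref{Levy_system_3}) applied to the small ball $B_x$, combined with Lemma \ref{large1} and the exit time estimate (\ref{exit_time_ball}). Put $R = r$, $z_0 = z$, so $R - |x-z| = \dist(x,D^c) = 3r_x$ and $B_x = B(x,r_x)$ is exactly the ball appearing in Lemma \ref{large1}. Since $\Pp^x(X_{\tau_{B_x}} \in D^c) \ge \Pp^x(X_{\tau_{B_x}} \in \bar D^{\,c})$, it suffices to bound the latter from below.

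First, the set $V_\rho = \{w: |w - z| \ge r + \rho\}$ satisfies $\dist(V_\rho, B_x) > 0$ for every $\rho > 0$. Hence (\ref{Levy_system_3}) (whose derivation works for any ball, in particular $B_x$) gives
$$
\Pp^x(X_{\tau_{B_x}} \in V_\rho) = \int_{B_x} \nu(y, V_\rho)\, G_{B_x}(x,dy).
$$
Letting $\rho \downarrow 0$, monotone convergence gives the same identity with $V_0 = \{|w-z| > r\}$.

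Next, we bound $\nu(y,V_0)$ from below. The set $\{w: |y - z + A(y)w| \ge r\}$ and its strict version differ only by the set where $|y-z+A(y)w| = r$. On each coordinate axis this set consists of at most two points, so it is $\mu$-null. For $y \in B_x$, Lemma \ref{large1} with $R=r$ therefore yields
$$
\nu(y,V_0) \ge \frac{c}{(3r_x)^{\alpha}}.
$$

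Consequently,
$$
\Pp^x(X_{\tau_{B_x}} \in D^c) \ge \frac{c}{(3r_x)^\alpha}\, G_{B_x}(x,B_x) = \frac{c}{(3r_x)^\alpha}\,\Ee^x \tau_{B_x}.
$$
By (\ref{exit_time_ball}) applied at the center of $B_x$, we have $\Ee^x\tau_{B_x} \approx r_x^{\alpha}$. This gives the bound with a constant $p > 0$ depending only on $\alpha, d, \eta_1, \eta_2$.

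The only delicate point is justifying (\ref{Levy_system_3}) for target sets touching $\partial D$, i.e.\ at positive distance from $B_x$ but not from $D$. This is handled by the approximation $V_\rho \uparrow V_0$ and the null-boundary remark above. Everything else is a direct combination of the earlier lemmas.
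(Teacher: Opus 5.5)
Your proof is correct and is essentially the paper's argument: the L\'evy system formula (\ref{Levy_system_3}) on $B_x$, the lower bound on $\nu(y,D^c)$ from Lemma \ref{large1}, and $\Ee^x\tau_{B_x}\approx r_x^{\alpha}$ from (\ref{exit_time_ball}). The approximation $V_\rho\uparrow V_0$ and the null-sphere remark are harmless but unnecessary: every $y\in B_x$ satisfies $|y-z|<r-2r_x$, so $\dist(D^c,B_x)\ge 2r_x>0$, and (\ref{Levy_system_3}) applies directly with $V=D^c$, as in the paper.
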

\begin{proof}
Let $x \in D$. By (\ref{Levy_system_3}) we have
\begin{equation}
\label{IWBx}
\Pp^x(X_{\tau_{B_x}} \in D^c) = \int_{B_x} \nu(y,D^c) G_{B_x}(x,dy).
\end{equation}
By Lemma \ref{large1} for $y \in B_x$ we get
\begin{eqnarray*}
\nu(y,D^c) &=& \mu(\{w \in \R^d: \, y + A(y) w \in D^c\})\\
&=& \mu(\{w \in \R^d: \, |y + w A(y) - z| \ge r\})\\
&\ge& \frac{c}{(r - |x - z|)^{\alpha}}.
\end{eqnarray*}
Using this, (\ref{IWBx}) and (\ref{exit_time_ball}) we get
\begin{eqnarray*}
\Pp^x(X_{\tau_{B_x}} \in D^c) &\ge& 
\frac{c}{(r - |x - z|)^{\alpha}} \int_{B_x} G_{B_x}(x,dy)\\
&=& \frac{c \Ee^x(\tau_{B_x})}{(\dist(x,D^c))^{\alpha}}\\
&\approx& \frac{r_x^{\alpha}}{(\dist(x,D^c))^{\alpha}},
\end{eqnarray*}
which gives the assertion.
\end{proof}

\begin{proposition}
\label{harmonic_measure_boundary}
Let $z \in \R^d$, $r > 0$ and put $D = B(z,r)$. Then for any $x \in D$ we have
$$
\Pp^x(X_{\tau_{D}} \in \partial D) = 0.
$$
\end{proposition}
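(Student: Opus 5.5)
The plan is to use Corollary \ref{Bxp} together with the strong Markov property in an iteration that makes the probability of exiting $D$ through $\partial D$ by a sequence of small steps geometrically small. Fix $x\in D$ and define stopping times recursively: $T_0=0$, and, while $X_{T_k}\in D$, let $T_{k+1}=T_k+\tau_{B_{X_{T_k}}}\circ\theta_{T_k}$, i.e. $T_{k+1}$ is the first exit after $T_k$ from the ball $B(X_{T_k},r_{X_{T_k}})$ of Corollary \ref{Bxp}. All these balls have closures contained in $D$, so $T_k\le\tau_D$ for every $k$.

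The first step is to show $\Pp^x(X_{T_{k}}\in D \text{ for all } k)=0$. By Corollary \ref{Bxp} and the strong Markov property at $T_k$,
$$
\Pp^x\bigl(X_{T_{k+1}}\in D,\ X_{T_k}\in D\bigr)\le (1-p)\,\Pp^x\bigl(X_{T_k}\in D\bigr),
$$
so $\Pp^x(X_{T_k}\in D)\le(1-p)^k\to 0$. Hence almost surely there is a finite $k$ with $X_{T_{k-1}}\in D$ and $X_{T_k}\in D^c$; at that $k$ we have $T_k=\tau_D$.

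The second step is to show that on this event $X_{\tau_D}\notin\partial D$, up to a null set. Since $\overline{B_y}\subset D$ for $y=X_{T_{k-1}}\in D$, exiting $D$ at time $T_k$ requires a jump from inside $\overline{B_y}\subset D$ to $D^c$. So it suffices to show that for every $y\in D$, $\Pp^y(X_{\tau_{B_y}}\in\partial D)=0$. By \eqref{Levy_system_3} applied to the ball $B_y$ and $V=\partial D$ (indeed $\dist(\partial D,B_y)>0$),
$$
\Pp^y(X_{\tau_{B_y}}\in\partial D)=\int_{B_y}\nu(w,\partial D)\,G_{B_y}(y,dw).
$$
For each $w\in D$ and each $i$, the set $\{v\in\R:\,|w+a_i(w)v-z|=r\}$ has at most two points, by the quadratic equation from the proof of Lemma \ref{large}. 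It is therefore Lebesgue-null, so $\mu$ gives it zero mass and $\nu(w,\partial D)=0$. The integral vanishes, and summing over $k$ (with the strong Markov property at $T_{k-1}$) gives $\Pp^x(X_{\tau_D}\in\partial D)=0$.

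The main technical point I expect is the justification that exits from $D$ can only happen by jumps, i.e. that $T_k=\tau_D$ exactly when $X_{T_k}\in D^c$, and that no continuous approach to $\partial D$ occurs. Continuous approach would correspond to $T_k\uparrow\tau_D$ with $X_{T_k}\in D$ for all $k$, which is the event ruled out in the first step. Quasi left continuity is not even needed here, since that event already has probability zero by the geometric bound.
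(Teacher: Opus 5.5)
Your proof is correct and follows essentially the same route as the paper. You iterate exits from the balls $B_y$ via the strong Markov property, use Corollary \ref{Bxp} for the geometric bound $(1-p)^k$, and use \eqref{Levy_system_3} to show that a single ball exit lands on $\partial D$ with probability zero. The differences are that you write the iteration pathwise with stopping times $T_k$ where the paper uses the recursion for $p_k, r_k$, and that you spell out why $\nu(w,\partial D)=0$ (at most two points per line, since $a_i(w)\neq 0$), which the paper leaves implicit.
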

The proof of this proposition uses arguments similar to the ones used in proofs of Lemma 6 in \cite{B1997} and Lemma 2.10 in \cite{KS2006}. For the convenience of the reader we repeat these arguments.
\begin{proof}
For $x \in D$ put $r_x = \frac{1}{3}\dist(x,D^c)$ and $B_x = B(x,r_x)$. By the strong Markov property we get for $x \in D$
\begin{equation}
\label{p0r0}
\Pp^x(X_{\tau_{D}} \in \partial D) =
\Pp^x(X_{\tau_{B_x}} \in \partial D) + 
\Ee^x\left(P^{X_{\tau_{B_x}}}(X_{\tau_{D}} \in \partial D)], \, X_{\tau_{B_x}} \in D\right).
\end{equation}
We denote the two terms on the right-hand side of (\ref{p0r0}) by $p_0(x)$ and $r_0(x)$ respectively. Using (\ref{p0r0}) we can prove inductively that for $k = 0, 1, \ldots$ and $x \in D$ we have
\begin{equation}
\label{pkrk}
\Pp^x(X_{\tau_{D}} \in \partial D) = p_0(x) + p_1(x) + \ldots + p_{k}(x) + r_k(x),
\end{equation}
where
\begin{equation}
\label{pk1}
p_{k + 1}(x) = \Ee^x\left(p_k(X_{\tau_{B_x}}), \, X_{\tau_{B_x}} \in D\right),
\end{equation}
and
$$
r_{k + 1}(x) = \Ee^x\left(r_k(X_{\tau_{B_x}}), \, X_{\tau_{B_x}} \in D\right).
$$
By Corollary \ref{Bxp}
$$
\sup_{x \in D} r_{k + 1}(x) \le (1 - p) \sup_{x \in D} r_{k}(x) \le (1- p)^{k + 1} \to 0, \quad \text{as $k \to \infty$.}
$$
By (\ref{pkrk}) we get $\displaystyle \Pp^x(X_{\tau_{D}} \in \partial D) = \sum_{k = 0}^{\infty} p_k(x)$. 
Using
 (\ref{Levy_system_3}) we obtain $p_{0}(x) = \Pp^x(X_{\tau_{B_x}} \in \partial D) = 0$. 
By (\ref{pk1}) we get $p_k(x) = 0$ for all $x \in D$ and all $k$, which finishes the proof. 
\end{proof}

We are now ready to prove our main result.
\begin{proof}[proof of Theorem \ref{main_thm}]
Fix $x \in D = B(z,r)$. We will show that there exist constants $C_1, C_2$ such that for any Borel set $W \subset [r,\infty)$ we have
\begin{equation}
\label{kappa_est}
C_1 \int_W \varphi_D^x(y) \, dy \le \Pp^x(|X_{\tau_{D}} - z| \in W) \le C_2 \int_W \varphi_D^x(y) \, dy.
\end{equation}

By Proposition \ref{harmonic_measure_boundary} we have $\Pp^x(|X_{\tau_{D}} - z| \in \{r\}) = 0$ so (\ref{kappa_est}) holds for $W = \{r\}$. For any $r_1 > r$ by (\ref{Levy_system_3}) we have $\Pp^x(|X_{\tau_{D}} - z| \in \{r_1\}) = 0$ so (\ref{kappa_est}) holds for $W = \{r_1\}$.
By Proposition \ref{main_thm_for_small_rings} we get that for any $\eps \in (0,r/4]$ and $\eta \in (0,\eps]$  (\ref{kappa_est}) holds for $W = (r+\eps, r + \eps + \eta)$.
It follows that (\ref{kappa_est}) holds for any $W = [a,b]$, where $[a,b] \subset [r,5r/4]$.

By Lemma \ref{I-W} (\ref{kappa_est}) holds for any $W = [a,b]$ such that $[a,b] \subset (5r/4,2r]$ or $[a,b] \subset [nr,(n+1)r]$ (where $n = 2, 3, \ldots$). 

Hence (\ref{kappa_est}) holds for any $W = [a,b] \subset [r,\infty)$.

By a standard monotone class argument, \eqref{kappa_est} holds for all Borel sets $W\subset [r,\infty)$.
In particular, $\Pp^x(|X_{\tau_{D}} - z| \in \cdot)$ is absolutely continuous with respect to Lebesgue measure on $[r,\infty)$. Hence by the Radon-Nikodym theorem there exists a Borel density $f_D^x:[r,\infty) \to [0,\infty)$ such that for any Borel set $W \subset [r,\infty)$ we have
\begin{equation}
\label{existence_density}
\Pp^x(|X_{\tau_{D}} - z| \in W) = \int_W f_D^x(y) \, dy.
\end{equation}
Moreover, \eqref{kappa_est} yields 
\begin{equation*}
C_1 \varphi_D^x(y) \le f_D^x(y) \le C_2 \varphi_D^x(y)
\end{equation*}
for a.e.\ $y\in[r,\infty)$. Redefining $f_D^x$ on a null set, we may assume that the above inequalities hold for all $y\in[r,\infty)$.

We have 
$$
u(x) = \Ee^x(g(X_{\tau_{D}})).
$$
Let $\kappa_D^x$ be the measure defined by (\ref{kappa_definition}). Since $g$ is radial with respect to $z$ we may write $g(X_{\tau_D})=\tilde g(|X_{\tau_D}-z|)$, hence  $u(x) = \int_r^{\infty} \tilde{g}(y) \, d\kappa_D^x(y)$. Combining this with \eqref{existence_density}, we obtain $u(x) = \int_r^{\infty} \tilde{g}(y) f_D^x(y)\, dy$. 
\end{proof}

\section{Comparison inequalities for $\calL_{e_d}$}

The goal of this section is to prove Proposition \ref{f_b_estimate1}.
We reduce the proof to the following two propositions. 

\begin{proposition}
\label{f_b_estimate}
Set $z = 0$, fix $r > 0$, $\eps \in (0,r/4]$ and $\eta \in (0,\eps]$. There exists a function $\theta \in \calG(r,\eps)$ (depending only on $r$, $\eps$ and $\alpha$)  and there exists a constant $b_1 > 0$ (depending only on $\alpha$) such that
$$
b_1 \calL_{e_d} f_{\theta}(x) \le - \frac{ \eps^{\alpha/2}}{\eta r^{1-\alpha/2} } \calL_{e_d} g(x)\quad \text{for any $x \in B(0,r)$.}
$$
\end{proposition}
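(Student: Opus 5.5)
The plan is to reduce everything to one-dimensional computations along the vertical line through $x$, and then compare the two sides region by region. Fix $x\in B(0,r)$ and write $x=\tilde x+x_de_d$ with $\tilde x\perp e_d$. Put
$$
\rho=\sqrt{r^2-|\tilde x|^2},\qquad \rho_1=\sqrt{(r+\eps)^2-|\tilde x|^2},\qquad \rho_2=\sqrt{(r+\eps+\eta)^2-|\tilde x|^2}.
$$
On the line $w\mapsto x+we_d$ we have $|x+we_d|^2=|\tilde x|^2+(x_d+w)^2$. Hence $f_\theta(x+we_d)$ is a function of $s=x_d+w$ supported in $(-\rho,\rho)$, equal to $(\rho^2-s^2)^{\alpha/2}\theta(|\tilde x|^2+s^2)$. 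Also $g(x+we_d)=1$ exactly when $\rho_1<|s|<\rho_2$.

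Since $g(x)=0$, we get the explicit formula
$$
\calL_{e_d}g(x)=\Aa\int_{\rho_1<|x_d+w|<\rho_2}\frac{dw}{|w|^{1+\alpha}}.
$$
Each of the two intervals has length $\rho_2-\rho_1\approx \eta r/\rho_1$ and lies at distance at least $\rho_1-|x_d|\ge\rho_1-\rho\approx \eps r/\rho_1$ from $x_d$. From this I will derive an upper bound for $\calL_{e_d}g(x)$ in terms of $\rho$, $\rho_1-|x_d|$ and $\eta$. This is a purely one-dimensional bound, with the tangential geometry encoded in $\rho$.

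For the left-hand side I split into two regions. First take $|x|\le r-\eps$. Here $\theta(|y|^2)=1/(r^2-|y|^2)$ for $|y|\le r-\eps$, so $f_\theta=h-k$ with $k=h-f_\theta\ge0$ supported in $\{r-\eps<|y|<r\}$. Nonnegativity of $k$ means $\theta(v)\le 1/(r^2-v)$, which I will extract from Definition \ref{Class_A_1}(iii)--(iv) and the Appendix construction. By \eqref{h_function_generator}, $\calL_{e_d}h(x)=0$, and $k(x)=0$, so
$$
\calL_{e_d}f_\theta(x)=-\Aa\int k(x+we_d)\,\frac{dw}{|w|^{1+\alpha}}.
$$
On the outer half of the annulus, say $(r-K/2)^2\le|y|^2<r^2$, we have $k(y)\gtrsim (r^2-|y|^2)^{\alpha/2-1}\ge q^{\alpha/2-1}$. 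On the line this region corresponds to $s$-intervals of length $\approx q/\rho$ (with $K\approx\eps$) adjacent to $\pm\rho$. Their distance from $x_d$ is comparable to $\rho_1-|x_d|$, because $\rho_1-\rho\approx\eps r/\rho_1$ is comparable to the length of these intervals. Thus the annular bump seen from $x$ looks like a scaled copy of the ring. Comparing integrands pointwise after this scaling should give the required inequality with $b_1$ depending only on $\alpha$, with the normalization $\eps^{\alpha/2}/(\eta r^{1-\alpha/2})$ absorbing the factor $q^{\alpha/2-1}$ against $\eta$.

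Second, take $|x|>r-\eps$. I write $f_\theta=\lambda\cdot\Psi$ with $\Psi(y)=\theta(|y|^2)$, and use the product identity
$$
\calL_{e_d}(\lambda\Psi)=\Psi\,\calL_{e_d}\lambda+\lambda\,\calL_{e_d}\Psi+\text{(bilinear term)}.
$$
By \eqref{lambda_formula_generator} and \eqref{theta_estimate}, the first term equals $-\tilde\calA_\alpha\theta(|x|^2)\approx -1/q$. The other two terms will be bounded using Lemma \ref{theta_estimates} and the second-derivative bound (iv). The main point is that $\theta$ is increasing and $C^2$ with $\theta''\le 2/q^3$, so $\lambda\,\calL_{e_d}\Psi$ and the bilinear term are either nonpositive or small compared with $\theta/C$. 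The freedom in choosing $K$ and $C_0$ in the Appendix construction will be used here. This gives $\calL_{e_d}f_\theta(x)\le -c/q$. Separately, in this region the ring is at distance $\gtrsim\eps r/\rho_1$ along the line, and the one-dimensional bound gives $\calL_{e_d}g(x)\lesssim \eta\,\eps^{-1}\cdot(\text{scale})^{-\alpha}$, which is to be matched against $c/q$.

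I expect the main obstacle to be this matching of sizes uniformly in the tangential parameter $\rho$. For lines nearly tangent to the sphere ($\rho$ small relative to $r$), the chord through the ring is long while $f_\theta$ along the line lives on a short interval. Here I will have to use the exact scaling: in the variable $s/\rho$, both the annulus of $f_\theta$ and the ring become intervals of relative width $\approx q/\rho^2$ and $\approx\eta r/\rho^2$ at relative distance $\approx \eps r/\rho^2$. After this substitution the inequality should become one between one-dimensional model quantities that do not depend on $\rho$, and the constant $b_1$ will be fixed there.
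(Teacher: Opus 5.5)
Your argument for $|x|\le r-\eps$ works, and it differs from the paper's. Instead of the flat piece of $\theta$ between $S_{**}$ and $S_{***}$, you use the boundary layer where $h-f_\theta\gtrsim (r^2-|y|^2)^{\alpha/2-1}$. This holds for every $\theta\in\calG(r,\eps)$ if you take the layer $\{r^2-|y|^2\le q/8\}$ and use $\theta\le 4/q$ from Lemma~\ref{theta_estimates}. Do not define the layer through $K$ or assume $K\approx\eps$, because $K$ must be small in the other region.

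The genuine gap is the region $r-\eps<|x|<r$. There you only aim for $\calL_{e_d}f_\theta(x)\le -c/q$. By Lemma~\ref{S_3_S_4_estimates}, however, the right-hand side there is of order $(S_2^\alpha\vee q^{\alpha/2})/q^{1+\alpha/2}$ with $S_2=\rho$. When $\rho^2\ge q$ this exceeds $1/q$ by the factor $(\rho^2/q)^{\alpha/2}$, which is $\approx (r/\eps)^{\alpha/2}$ for $\rho\approx r$. Concretely, take $\tilde x=0$ and $x=(r-\delta)e_d$ with $\delta<\eps$. The ring sits at distance $\approx\eps$ from $x_d$, so $\calL_{e_d}g(x)\approx \eta\,\eps^{-1-\alpha}$ and $\frac{\eps^{\alpha/2}}{\eta r^{1-\alpha/2}}\calL_{e_d}g(x)\approx q^{-1}(r/\eps)^{\alpha/2}$. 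Matching this against $c/q$ would force $b_1\gtrsim (r/\eps)^{\alpha/2}$.

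The term $-\tilde{\calA}_\alpha\theta(|x|^2)$ has the right size only when $\rho^2\lesssim q$, i.e.\ for nearly tangent lines (the paper's Subcases 2B and 3B). So the obstacle is at $\rho\approx r$, the opposite end from where you expect it. The substitution $s\mapsto s/\rho$ does not remove $\rho$, since the relative widths $q/\rho^2$ and $\eps r/\rho^2$ still depend on it.

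What is missing is an extra negative contribution of order $S_2^\alpha/q^{1+\alpha/2}$. It is hidden in $\lambda\,\calL_{e_d}\Psi$ and the bilinear term, which you only planned to show are nonpositive or small. Put $u=\lambda\,(\Psi-\theta(|x|^2))$. Then $u(\tilde x+se_d)\lesssim -q^{\alpha/2-1}$ for $s\in\bigl(S_1-\tfrac12(S_2-S_1),\,S_1-\tfrac13(S_2-S_1)\bigr)$, because $\theta(v)=1/(r^2-v)$ is still strictly increasing below $S_1$ while $\theta(|x|^2)\ge 1/q$. This interval has length $\approx q/S_2$ and lies at distance $\approx q/S_2$ from $x_d$. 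It therefore contributes at most $-\Aa\kappa_1 S_2^\alpha/q^{1+\alpha/2}$ when $|\tilde x|\le r-2\eps$ (Lemma~\ref{under_S_1_estimates}).

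The positive contributions must then be beaten by \emph{this} quantity, not merely by $1/q$. They come from a window around $x_d$ and from the part of $(S_1,S_2)$ where $\theta$ exceeds $1/q$. A second-order Taylor bound with $\theta''\le 2/q^3$ over a window of the natural size $q/S_2$ is itself of order $S_2^\alpha/q^{1+\alpha/2}$, so it cannot win on its own.

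The paper handles this by choosing $\theta$ with $K\le\eps/N$ and $\|\theta\|_\infty-1/q\le N^{-4-\alpha}$, and splitting at $|x|=r-\eps+\eps/N$:
\begin{itemize}
\item For $r-\eps\le|x|\le r-\eps+\eps/N$, the local window shrinks to $\eps/N$ and costs only $\kappa_2N^{\alpha-2}S_2^\alpha/q^{1+\alpha/2}$. The rest of $(S_1,S_2)$ costs $O(N^{-2})\,S_2^\alpha/q^{1+\alpha/2}$ (Lemmas~\ref{estimates_double} and \ref{estimates_S_1_S_2_minus_W}).
\item For $|x|>r-\eps+\eps/N$, one has $u\le 0$ everywhere.
\end{itemize}
Without this mechanism, or an equivalent one, your second region does not close with $b_1$ depending only on $\alpha$.
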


\begin{proposition}
\label{F_b_estimate}
Set $z = 0$, fix $r > 0$, $\eps \in (0,r/4]$ and $\eta \in (0,\eps]$. There exists a constant $b_2 > 0$ (depending only on $\alpha$) such that
$$
b_2 \calL_{e_d} F_{\Theta}(x) \ge -\frac{ \eps^{\alpha/2}}{\eta r^{1-\alpha/2} } \calL_{e_d} g(x)\quad \text{for any $x \in B(0,r)$.}
$$
\end{proposition}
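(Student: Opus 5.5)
The plan is to subtract off the $\calL_{e_d}$-harmonic part of $F_\Theta$ and to compare what remains with the explicit positive quantity $\calL_{e_d} g$.

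\textbf{Reduction.} Since $g \equiv 0$ on $B(0,r)$, for $x \in B(0,r)$ we have
$$
\calL_{e_d} g(x) = \Aa \int_{\R} g(x+we_d)\,\frac{dw}{|w|^{1+\alpha}} \ge 0 .
$$
So the right-hand side of the claimed inequality is nonpositive. By \eqref{Theta}, for $x \in B(0,r)$,
$$
F_\Theta = h - \psi, \qquad \psi(x) = (r^2-|x|^2)^{\alpha/2-1}\Bigl(\frac{(|x|^2-(r-\eps)^2)_+}{q}\Bigr)^3 ,
$$
with $\psi = 0$ outside $B(0,r)$. By \eqref{h_function_generator}, $\calL_{e_d} h = 0$ in $B(0,r)$, hence $\calL_{e_d} F_\Theta = -\calL_{e_d}\psi$. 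Writing $K = \eps^{\alpha/2}/(\eta r^{1-\alpha/2})$, it therefore suffices to prove
$$
b_2\,\calL_{e_d}\psi(x) \le K\,\calL_{e_d} g(x), \qquad x \in B(0,r),
$$
for some $b_2$ depending only on $\alpha$. This amounts to an upper bound for $\calL_{e_d}\psi$ and a lower bound for $\calL_{e_d} g$.

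\textbf{Lower bound for $\calL_{e_d} g$.} Write $x = \tilde x + x_d e_d$. The set of $w$ with $|x+we_d| \in (r+\eps, r+\eps+\eta)$ is a union of intervals obtained by solving $(x_d+w)^2 \in \bigl((r+\eps)^2-|\tilde x|^2,\, (r+\eps+\eta)^2-|\tilde x|^2\bigr)$. Each interval has length at least comparable to $\eta (r+\eps)/\sqrt{(r+\eps)^2-|\tilde x|^2}$. Its distance from $0$ is at most the larger of $\sqrt{(r+\eps)^2-|\tilde x|^2}$ and the distance to the boundary along $e_d$, which is computed as in Lemma \ref{large}. This gives an explicit lower bound for $\calL_{e_d} g(x)$ in terms of $\eta$, $\eps$, the chord length $\ell(\tilde x)=\sqrt{r^2-|\tilde x|^2}$ and $x_d$. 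The two extreme regimes are these. In the transversal regime ($\ell \approx r$, with $x$ near the top of the ball) one gets $\calL_{e_d} g \gtrsim \eta/(\eps+\delta_D(x))^{1+\alpha}$. In the nearly tangential regime one gets $\calL_{e_d} g \gtrsim \eta\, r^{-\alpha/2}\eps^{-1-\alpha/2}$, that is, $K\calL_{e_d} g \gtrsim 1/(r\eps)$.

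\textbf{Upper bound for $\calL_{e_d}\psi$.} Here I split into two cases.

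\emph{Case 1: $|x| \le r-\eps$.} Then $\psi(x)=0$, so $\calL_{e_d}\psi(x) = \Aa\int \psi(x+we_d)|w|^{-1-\alpha}\,dw$. I would use $\psi \le h$ on the annulus $r-\eps<|y|<r$ and integrate $h$ along the chord exactly. On the $e_d$-line this is the one-dimensional function $s_\ell(t) = (\ell^2-t^2)^{\alpha/2-1}_+$ restricted to $|t| > \sqrt{\ell^2 - q}$ (wait: restricted to $t^2 > (r-\eps)^2 - |\tilde x|^2$). Its integral is of order $q^{\alpha/2}/\ell$ near the endpoints. The weight $|w|^{-1-\alpha}$ is evaluated at the same distances as for $g$, up to constants. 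I would then check, in each of the regimes above, that the result is $\lesssim K\calL_{e_d} g(x)$.

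\emph{Case 2: $r-\eps<|x|<r$.} Here I would not estimate $\psi$ directly; instead I would use $\calL_{e_d}\psi = -\calL_{e_d}F_\Theta$. Since $F_\Theta = \lambda\,\Theta(|\cdot|^2)$, the product rule for the nonlocal operator gives
$$
\calL_{e_d}(\lambda\Theta) = \Theta\,\calL_{e_d}\lambda + \lambda\,\calL_{e_d}\Theta + \text{(cross term)} .
$$
By \eqref{lambda_formula_generator}, the first term equals $-\tilde{\calA}_\alpha\,\Theta(|x|^2) \approx -1/q$, using \eqref{Theta_estimate}. The remaining terms are controlled by Lemma \ref{theta_estimates} together with the $C^2$ bound $|\Theta''| \lesssim q^{-3}$. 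This yields $-\calL_{e_d}F_\Theta(x) \lesssim 1/q \approx 1/(r\eps)$. Comparing with the lower bound for $\calL_{e_d} g$ closes this case. The case is delicate only for those $x$ near the boundary whose $e_d$-line is transversal; there $K\calL_{e_d} g$ is even larger, so that regime is harmless.

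\textbf{Main obstacle.} I expect the hardest step to be the product-rule estimate for the cross term $\int(\lambda(x+we_d)-\lambda(x))(\Theta(|x+we_d|^2)-\Theta(|x|^2))|w|^{-1-\alpha}\,dw$ in Case 2. It must be bounded by $C/q$ uniformly in $x$, including for $x$ very close to $\partial B(0,r)$, where $\lambda$ is only $\alpha/2$-Hölder. My first attempt would be to split the integral at $|w| = \sqrt{q}$. For small $|w|$ I would use the Lipschitz bound on $\Theta$ from Lemma \ref{theta_estimates}, namely $|\Theta(|x+we_d|^2)-\Theta(|x|^2)| \le 3q^{-2}\cdot 2r|w|$, combined with the Hölder bound on $\lambda$. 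For large $|w|$ I would use the sup bound $\Theta \le 4/q$ together with $\lambda \le (2rq)^{\alpha/2}$.
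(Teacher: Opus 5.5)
Your architecture matches the paper's: its Case 1 works with $u=F_\Theta-h=-\psi$, and its Case 2 with $u=F_\Theta-\lambda\varPhi(x)$, which is your product rule with the last two terms merged. However, the key estimate in each of your cases fails as proposed.

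\textbf{Case 1.} Bounding $\psi\le h$ on the whole annulus discards the factor $\bigl((s^2-S_1^2)/q\bigr)^3$. Your claim that $|w|^{-1-\alpha}$ is evaluated ``at the same distances as for $g$'' is false. On the line, the support of $\psi$ begins at $S_1$, and $S_1-x_d\to0$ as $|x|\uparrow r-\eps$. In contrast, $S_3-x_d\ge \eps r/(2S_3)\ge 2\eps/5$ by \eqref{S_3_x_d}. Since $S_2^2-s^2\le q$ on $(S_1,S_2)$, your majorant is at least $\Aa q^{\alpha/2-1}\int_{S_1}^{S_2}(s-x_d)^{-1-\alpha}\,ds$. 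This blows up like $(S_1-x_d)^{-\alpha}$ and is infinite at $|x|=r-\eps$, which belongs to your Case 1. Meanwhile your $K\calL_{e_d}g\approx S_2^\alpha q^{-1-\alpha/2}$ stays bounded there. The cubic vanishing of $\psi$ on $|y|=r-\eps$ is exactly what makes this case work. Using $x_d\le S_1$, the bound $(s-S_1)^{-1-\alpha}\le 2^{1+\alpha}sS_2^{\alpha}(s^2-S_1^2)^{-1-\alpha}$, and the substitution $v=s^2-S_1^2$, the integral becomes $2^\alpha S_2^\alpha q^{-3}\int_0^q(q-v)^{\alpha/2-1}v^{2-\alpha}\,dv\approx S_2^\alpha q^{-1-\alpha/2}$. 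The cube may be dropped only away from the inner sphere, e.g.\ for $|x|\le r-2\eps$. There $S_1-x_d\gtrsim\delta_D(x)q/(S_2\eps)$ gives the factor $(\eps/\delta_D(x))^{1+\alpha}$.

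\textbf{Case 2.} Your target $-\calL_{e_d}F_\Theta\lesssim 1/q$ is false. At $x=(r-\eps)e_d$, the computation just described gives $-\calL_{e_d}F_\Theta(x)=\calL_{e_d}\psi(x)\gtrsim r^\alpha q^{-1-\alpha/2}$. This is $(r^2/q)^{\alpha/2}$ times larger than $1/q$, and by continuity the same holds for $|x|$ slightly above $r-\eps$.

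The correct and sufficient target is $\frac{S_2^\alpha\vee q^{\alpha/2}}{q^{1+\alpha/2}}$, which matches Lemma~\ref{S_3_S_4_estimates} for $|x|\ge r-\eps$. Your cross-term scheme does not reach it either. Cutting at $|w|=\sqrt q$ with your constant $6r/q^2$ and the H\"older constant $(2r)^{\alpha/2}$ of $\lambda$ gives a small-$|w|$ contribution of order $r^{1+\alpha/2}q^{-3/2-\alpha/4}$. This exceeds the target by at least the unbounded factor $(r^2/q)^{1/2-\alpha/4}$. The right scale is $p(x)=S_2-x_d\le q/S_2$, with the along-line Lipschitz constant $6S_2/q^2$ from \eqref{diff1}.

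The paper proceeds as follows. It symmetrizes $u(x+te_d)+u(x-te_d)$ and drops the term $\lambda(x-te_d)\bigl(\varPhi(x+te_d)+\varPhi(x-te_d)-2\varPhi(x)\bigr)\ge0$ by convexity; note that beyond $(r-\eps)^2$ one has $\Theta(v)=(1+w+w^2)/q$ with $w=(v-(r-\eps)^2)/q$. It controls $(\lambda(x+te_d)-\lambda(x-te_d))(\varPhi(x+te_d)-\varPhi(x))$ for $|t|<p(x)$ via \eqref{lambda} (Lemma~\ref{estimates_double1}), and treats $|t|\ge p(x)$ by Lemma~\ref{Case2L}.

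Finally, your product rule needs a continuous extension of $\Theta$ past $r^2$. With $\varPhi$, which jumps from $3/q$ to $0$ on $\partial B(0,r)$, the terms $\lambda\calL_{e_d}\varPhi$ and the cross term each contain a jump contribution of order $q^{-1}(S_2/p(x))^{\alpha/2}$. These contributions cancel only when the two terms are combined.
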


Assuming Propositions \ref{f_b_estimate},  \ref{F_b_estimate} we obtain Proposition \ref{f_b_estimate1}.

\begin{proof}[proof of Proposition \ref{f_b_estimate1}]
By Proposition \ref{f_b_estimate} there exist $\theta \in \calG(r,\eps)$ and $b_1 > 0$ such that $\calL_{e_d} f_{b_1,\theta}(x) \le 0$ for any $x \in B(0,r)$, where $f_{b,\theta}$ is defined in (\ref{f_b_definition}). By Lemma \ref{lemma_L_e_d} $\calL f_{b_1,\theta}(x) \le 0$ for any $x \in B(0,r)$. Similarly, by Proposition \ref{F_b_estimate} there exists $b_2 > 0$ such that $\calL_{e_d} F_{b_2,\Theta}(x) \ge 0$ for any $x \in B(0,r)$, where $F_{b,\Theta}$ is defined in (\ref{F_b_definition}). By Lemma \ref{lemma_L_e_d} 
$\calL F_{b_2,\Theta}(x) \ge 0$ for any $x \in B(0,r)$.
\end{proof}

The remainder of this section is devoted to the proofs of Propositions \ref{f_b_estimate},  \ref{F_b_estimate}. 

Unless stated otherwise, we assume that all constants in this section are positive and may depend only on $\alpha$. In this section we write $f_1(x) \approx f_2(x)$ for $x \in U$ when there exist constants $c_1, c_2 > 0$ depending only on $\alpha$ such that for all $x \in U$ we have $c_1 f_1(x) \le f_2(x) \le c_2 f_1(x)$. Similarly, we write $f_1(x) \lesssim f_2(x)$ for $x \in U$ when there exists a constant $c > 0$ depending only on $\alpha$ such that for all $x \in U$ we have $f_1(x) \le c f_2(x)$. 

Set $z = 0$. Fix $r > 0$, $\eps \in (0,r/4]$ and $\eta \in (0,\eps]$. 
For any $\theta \in \calG(r,\eps)$, $\Theta$ given by (\ref{Theta}) and $y \in \R^d$ denote
\begin{equation*}
\varphi(y) =  \left\{              
\begin{array}{lll}                   
\theta(|y|^2),&\text{for}& y \in B(0,r), \\  
0 &\text{for}& y \in B^c(0,r).
\end{array}
\right. 
\end{equation*}

\begin{equation*}
\varPhi(y) =  \left\{              
\begin{array}{lll}                   
\Theta(|y|^2),&\text{for}& y \in B(0,r), \\  
0 &\text{for}& y \in B^c(0,r).
\end{array}
\right. 
\end{equation*}

For any $x = (x_1, \ldots, x_d) \in \R^d$ put $\tilde{x} = (x_1, \ldots, x_{d-1},0) \in \R^{d}$. Without loss of generality we may assume that $x_d \ge 0$. 
For any $x \in B(0,r)$ we define $S_1(x)$, $S_2(x)$, $S_3(x)$, $S_4(x)$  by
\begin{eqnarray*}
S_1(x) &=& \sqrt{[(r - \eps)^2 - |\tilde{x}|^2] \vee 0},\\
S_2(x) &=& \sqrt{r^2 - |\tilde{x}|^2},\\
S_3(x) &=& \sqrt{(r + \eps)^2 - |\tilde{x}|^2},\\
S_4(x) &=& \sqrt{(r + \eps + \eta)^2 - |\tilde{x}|^2}.
\end{eqnarray*}
Similarly, for any $x \in B(0,r)$ and $N \ge 4$ we define $S_*(x)$, $S_{**}(x)$, $S_{***}(x)$ by
\begin{eqnarray*}
S_{*}(x) &=& \sqrt{[(r - \eps + \eps/N)^2 - |\tilde{x}|^2] \vee 0},\\
S_{**}(x) &=  & \sqrt{[(r - 3\eps/4)^2 - |\tilde{x}|^2] \vee 0},\\
S_{***}(x) &=& \sqrt{[(r - \eps/2)^2 - |\tilde{x}|^2] \vee 0}.
\end{eqnarray*} 
It is clear that we have the following relations 
$$S_1(x)\le  S_{*}(x) \le S_{**}(x) \le S_{***}(x) \le S_2(x)\le S_3(x)\le S_4(x).$$

%Note that for $x \in B(0, r - \eps)$ we have
%\begin{eqnarray}
%\nonumber
%S_2(x) - S_1(x) &=&
%\sqrt{r^2 - |\tilde{x}|^2} - \sqrt{(r - \eps)^2 - |\tilde{x}|^2}\\
%\nonumber
%&=& \frac{\eps (2r -\eps)}{\sqrt{r^2 - |\tilde{x}|^2} + \sqrt{(r - \eps)^2 - |\tilde{x}|^2}}\\
%\nonumber
%&\le& \frac{2 \eps r}{\sqrt{r^2 - |\tilde{x}|^2}}\\
%\label{S_2_S_1}
%&\le& \frac{2 \eps \sqrt{r}}{\sqrt{r - |\tilde{x}|}}
%\end{eqnarray}
Recall that $q = r^2 - (r-\eps)^2$.
For  $x \in B(0, r)$, we have the following estimate
\begin{equation} \label{S_2_S_1N}
S_2(x) - S_1(x)\le \frac{q}{S_2(x) }.
\end{equation}
Indeed, for $|\tilde{x} |\le r-\eps$ we have $S^2_2(x) - S^2_1(x)=q$, hence
\begin{equation*}
S_2(x) - S_1(x) = \frac{S^2_2(x) - S^2_1(x)}{S_2(x) + S_1(x)}
= \frac{q}{S_2(x) + S_1(x)}
\le \frac{q}{S_2(x) }
\end{equation*}
and if $|\tilde{x} |\ge r-\eps$ then $S_1(x)=0$ and  $S^2_2(x)\le q$, which proves  (\ref{S_2_S_1N}) in this case.

Similarly, for $x \in B(0, r)$ we obtain
\begin{eqnarray}
\nonumber
S_4(x) - S_3(x) &=&
\sqrt{(r + \eps + \eta)^2 - |\tilde{x}|^2} - \sqrt{(r + \eps)^2 - |\tilde{x}|^2}\\
\nonumber
&=& \frac{\eta (2r+2\eps+\eta)}{\sqrt{(r + \eps + \eta)^2 - |\tilde{x}|^2} + \sqrt{(r + \eps)^2 - |\tilde{x}|^2}}\\
\label{S_4_S_3}
&\approx& \frac{\eta \sqrt{r}}{\sqrt{r + \eps - |\tilde{x}|}}\approx \frac{\eta r}{S_3(x)}.
\end{eqnarray}
\begin{equation}
\label{S_3_x_d}
S_3(x) - x_d =
\sqrt{(r + \eps)^2 - |\tilde{x}|^2} - x_d
= \frac{(r + \eps - |x|)(r + \eps + |x|) }{\sqrt{(r + \eps )^2 - |\tilde{x}|^2} + x_d}.
\end{equation}

Now we present several estimates involving the functions $\lambda, \varphi$ and $\varPhi$, which will be used in the sequel. Recall that we assume $x_d \ge 0$.  

Let $0< a\le b$ then, by concavity of the power function $(0, \infty)\ni v\to v^{\alpha/2}$, we have 
$$ b^{\alpha/2}-a^{\alpha/2}\le (\alpha/2)a^{\alpha/2-1}(b-a).$$
%%%%
Applying the above elementary inequality and assuming $x_d+ |t |< S_2(x)$  we have 
%%%%%
\begin{eqnarray}    
\left| \lambda(x + te_d ) - \lambda(x - t e_d) \right| &=& |(S^2_2(x)-(x_d+t)^2))^{\alpha/2}-  (S^2_2(x)-(x_d-t)^2)^{\alpha/2}|\nonumber\\
&\le&  \frac\alpha 2 (S^2_2(x)-(|x_d|+|t|)^2)^{\alpha/2-1}|(x_d+t)^2 -(x_d-t)^2|\nonumber \\ 
&=& 2\alpha (S^2_2(x)-(|x_d|+|t|)^2)^{\alpha/2-1}|t||x_d|\nonumber\\ \label{lambda}
&\le& 2\alpha (S^2_2(x)-(|x_d|+|t|)^2)^{\alpha/2-1}|t|S_2(x). 
\end{eqnarray}

Now we turn to the estimates  involving the function $\varphi$. We assume that $\theta \in \calG(r,\eps)$.
When $x_d+ |t |< S_2(x)$ we have
\begin{equation}
\label{estimates_double_2a}
\frac{d}{dt} \varphi(x + t e_d) = \frac{d}{dt} \theta(|x + t e_d|^2)
= \theta'(|\tilde{x}|^2 + (t + x_d)^2) 2 (t + x_d)
\end{equation}
and
\begin{equation}
\label{estimates_double_3}
\frac{d^2}{dt^2} \varphi(x + t e_d) = 
 \theta''(|\tilde{x}|^2 + (t + x_d)^2) 4 (t + x_d)^2 + 2 \theta'(|\tilde{x}|^2 + (t + x_d)^2).
\end{equation}

Hence, if $x_d + |t|\le S_2(x)$, by (\ref{estimates_double_2a}) and Lemma \ref{theta_estimates},  we get 
$$
\left| \frac{d}{dt} \varphi(x + t e_d) \right| 
\le \frac{3}{q^2} 2 |x_d + t| 
\le \frac{6 S_2(x)}{q^2}.
$$
So, for some  $|\xi| \le S_2(x)-x_d$, we arrive at
\begin{equation}
\left| \varphi(x + t e_d) - \varphi(x) \right| =
|t| \left| \left[\frac{d}{dt}\varphi(x + t e_d)\right]_{t = \xi} \right|
\le 6 |t|\frac{ S_2(x)}{q^2}\label{varphi}.
\end{equation}

Similarly,
using the above estimates, condition (iv) in Definition \ref{Class_A_1}, Lemma \ref{theta_estimates} and (\ref{estimates_double_3}) we get 
$$
\frac{d^2}{dt^2} \varphi(x + t e_d) \le \frac{2}{q^3} 4 (x_d + t)^2 + \frac{6}{q^2} 
\le  8 \frac{S_2^2(x)} {q^3} + \frac{6}{q^2}.$$
%%%%%%
By the above estimate, for some $|\xi| \le S_2(x)-x_d$, we obtain
\begin{equation}
\label{estimates_double_4}
\varphi(x + t e_d) + \varphi(x - t e_d) - 2 \varphi(x) = t^2 \left[\frac{d^2}{dt^2}\varphi(x + t e_d)\right]_{t = \xi}
\le    t^2 \left(\frac{8 S_2^2(x)} {q^3} + \frac{6}{q^2}\right).
\end{equation}

 By similar arguments,   for $s \in (-S_2(x),S_2(x))$, we have %due to Lemma \ref{}
\begin{equation}\label{diff1}
\left| \varPhi(\tilde{x} + s e_d) - \varPhi(x) \right| =
|s-x_d| \left| \left[\frac{d}{dt}\varPhi(x + t e_d)\right]_{t = \xi} \right|
\le 6 |s-x_d| \frac{S_2(x)}{q^{2} }.
\end{equation}
%where $\xi \in (s,x_d)$ or $\xi \in (x_d,s)$ .
%\begin{equation}
%\left| \varPhi(x + t e_d) - \varPhi(x) \right| =
%|t| \left| \left[\frac{d}{dt}\varPhi(x + t e_d)\right]_{t = \xi} \right|
%\le 6 |t| \frac{S_2(x)}{q^{2} },\label{diff}
%\end{equation}

%Note  that  for $s \in (-S_2,S_2)$ due to Lemma \ref{}
%\begin{equation}\label{diff1}
%\left| \varPhi(\tilde{x} + s e_d) - \varPhi(x) \right| =
%|s-x_d| \left| \left[\frac{d}{dt}\varPhi(x + t e_d)\right]_{t = \xi} \right|
%\le 6 |s-x_d| \frac{S_2(x)}{q^{2} },
%\end{equation}
%where $\xi \in (s,x_d)$ or $\xi \in (x_d,s)$ . 

Next, we state several auxiliary lemmas that will be used to derive estimates for $\calL_{e_d} f_{\theta}$ and $\calL_{e_d} F_{\Theta}$.
\begin{lemma}
\label{under_S_1_estimates}
Assume that $|x| \in [r - \eps, r)$, $|\tilde{x}| \le r - 2 \eps$, $x_d \ge 0$ and $\theta \in \calG(r,\eps)$. Let $S_1= S_1(x)$ and $S_2= S_2(x)$. Then we have
$S_1 \ge \frac 12(S_2-S_1)\ge \frac 13(S_2-S_1)\ge \frac14\eps$
and there exists a constant $\kappa_1$ such that
$$
\int_{S_1 -\frac 12(S_2-S_1)}^{S_1 -\frac 13(S_2-S_1)}
\frac{\lambda(\tilde{x} + s e_d) (\varphi(\tilde{x} + S_1 e_d) - \varphi(\tilde{x} + s e_d))}{|s - x_d|^{1 + \alpha}} \, ds 
\ge \kappa_1 \frac{S_2^{\alpha}}{q^{1 + \alpha/2}}.
$$
\end{lemma}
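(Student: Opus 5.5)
The plan is to work directly with the explicit formulas for $\lambda$ and $\varphi$ on the interval of integration. On that interval $|\tilde{x}+se_d|<r-\eps$, so $\varphi$ equals $1/(r^2-|\cdot|^2)$ there and everything is computable. Throughout, $S_1=S_1(x)$ and $S_2=S_2(x)$ are taken at the fixed point $x$ of the statement.

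\emph{Step 1: the elementary inequalities.} Since $|\tilde{x}|\le r-2\eps<r-\eps$, we have
$$S_1^2=(r-\eps)^2-|\tilde{x}|^2>0, \qquad S_2^2-S_1^2=q.$$
Using $|\tilde{x}|\le r-2\eps$ and $\eps\le r/4$, we get $S_1^2\ge (r-\eps)^2-(r-2\eps)^2=\eps(2r-3\eps)\ge \tfrac54 r\eps$. On the other hand $q=\eps(2r-\eps)\le 2r\eps$. Hence $8S_1^2\ge q=S_2^2-S_1^2$, i.e. $3S_1\ge S_2$, which is the first inequality $S_1\ge\frac12(S_2-S_1)$. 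The middle inequality is trivial. For the last one, write
$$S_2-S_1=\frac{q}{S_1+S_2}\ge \frac{q}{2r}\ge \frac{\eps(2r-r/4)}{2r}=\frac{7\eps}{8},$$
so $\frac13(S_2-S_1)\ge\frac{7}{24}\eps\ge\frac14\eps$. The same identity together with $S_1\le S_2$ gives the two-sided bound
$$\frac{q}{2S_2}\le S_2-S_1\le \frac{q}{S_2},$$
which I will use repeatedly. I will also record $S_1\approx S_2$, which follows from $S_2/3\le S_1\le S_2$.

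\emph{Step 2: pointwise bounds on the interval $I=[S_1-\frac12(S_2-S_1),\,S_1-\frac13(S_2-S_1)]$.} By Step 1 we have $0\le s<S_1$ on $I$, so $|\tilde{x}+se_d|<r-\eps$. Definition \ref{Class_A_1}(i) then gives $\varphi(\tilde{x}+se_d)=1/(q+S_1^2-s^2)$ and $\varphi(\tilde{x}+S_1e_d)=1/q$. Therefore
$$\varphi(\tilde{x}+S_1e_d)-\varphi(\tilde{x}+se_d)=\frac{S_1^2-s^2}{q\,(q+S_1^2-s^2)}.$$
Here $S_1^2-s^2=(S_1-s)(S_1+s)$ with $S_1-s\in[\frac13(S_2-S_1),\frac12(S_2-S_1)]$ and $S_1+s\in[S_1,2S_1]$. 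Combining Step 1 with $S_1\approx S_2$ gives $S_1^2-s^2\approx q$, hence the difference of $\varphi$-values is $\approx 1/q$ with absolute constants.

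Likewise $\lambda(\tilde{x}+se_d)=(S_2^2-s^2)^{\alpha/2}=((S_2-s)(S_2+s))^{\alpha/2}$. Here $S_2-s\in[\frac43(S_2-S_1),\frac32(S_2-S_1)]\approx q/S_2$ and $S_2+s\approx S_2$, so $\lambda\approx q^{\alpha/2}$.

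For the kernel I use $x_d\ge S_1$, which holds because $|x|\ge r-\eps$. Since $s<S_1\le x_d<S_2$, we get $0<x_d-s\le S_2-s\le\frac32(S_2-S_1)\le \frac{3q}{2S_2}$. Hence $|s-x_d|^{-1-\alpha}\gtrsim (S_2/q)^{1+\alpha}$.

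\emph{Step 3: integrate.} The integrand is nonnegative on $I$, and $|I|=\frac16(S_2-S_1)\ge \frac{q}{12S_2}$. Multiplying the pointwise lower bounds by the length of $I$ gives
$$\gtrsim q^{\alpha/2}\cdot\frac1q\cdot\frac{q}{S_2}\cdot\frac{S_2^{1+\alpha}}{q^{1+\alpha}}=\frac{S_2^{\alpha}}{q^{1+\alpha/2}},$$
with a constant depending only on $\alpha$. This yields $\kappa_1$.

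The only delicate point is Step 1: one must check that the constraint $|\tilde{x}|\le r-2\eps$ keeps $S_1$ comparable to $S_2$. That is exactly what makes $S_1^2-s^2\approx q$ and $S_2+s\approx S_2$ hold uniformly, and it is where $\eps\le r/4$ enters.
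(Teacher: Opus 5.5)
Your proposal is correct and follows essentially the same route as the paper: use Definition~\ref{Class_A_1}(i) to write the integrand explicitly on the interval, then bound $\lambda$, the $\varphi$-difference and the kernel pointwise (via $x_d\ge S_1$ and $S_2-S_1\le q/S_2$), and integrate. The only cosmetic difference is in the final step. The paper computes $\int(S_1-s)\,ds$ exactly and uses $S_1/S_2\ge\sqrt{5/12}$, whereas you multiply cruder pointwise bounds by the length of the interval and use $S_2\le 3S_1$.
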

\begin{proof}
Using $ |\tilde{x}| \le r- 2 \eps$ we get 
$$\frac{S^2_1}{S^2_2}=\frac{(r-\eps)^2- |\tilde{x}|^2}{r^2- |\tilde{x}|^2}
\ge \frac{(r-\eps)^2- (r-2\eps)^2}{r^2- (r-2\eps)^2}= \frac{(2r-3\eps)}{2 (2r-2\eps)}\ge \frac{5}{12},$$
since $r\ge 4\eps$.
 This implies that 
%%%%
$$S_1 \ge \frac 12(S_2-S_1)\ge \frac q{4S_2}\ge \frac q{4r}= \frac{(2r-\eps)\eps}{4r}\ge \frac38\eps .$$
For $s \in (0,S_1)$ we have
\begin{eqnarray*}
\varphi(\tilde{x} + S_1 e_d) - \varphi(\tilde{x} + s e_d)
&=& \frac{1}{r^2 - |\tilde{x}|^2 - S_1^2} - \frac{1}{r^2 - |\tilde{x}|^2 - s^2}\\
&=& \frac{S^2_1 - s^2}{q (S^2_2 - s^2)}.
\end{eqnarray*}
Hence for $s \in (0,S_1)$ we have
\begin{equation}
\label{under_S_1_estimates_1}
\lambda(\tilde{x} + s e_d) (\varphi(\tilde{x} + S_1 e_d) - \varphi(\tilde{x} + s e_d)) = 
\frac{ S^2_1 - s^2}{q  (S_2^2- s^2)^{1 - \alpha/2}}.
\end{equation}
Since $S_1 \ge \frac 12(S_2-S_1)$, for $s \in \left(S_1 - \frac 12(S_2-S_1), S_1\right)$ we have
\begin{eqnarray*}
S_2^2 - s^2 
&\le& S_2^2 - \left(S_1 -\frac 12(S_2-S_1)\right)^2\\
&=&  S_2^2 - S_1^2 - \left(\frac 12(S_2-S_1)\right)^2 +  S_1(S_2-S_1) \\
&\le&  S_2^2 - S_1^2  +  S_1(S_2-S_1) \\
&\le& 2q .
\end{eqnarray*}
Therefore, for $s \in \left(S_1 - \frac 12(S_2-S_1), S_1\right)$ we have
\begin{equation}
\label{under_S_1_estimates_2}
\lambda(\tilde{x} + s e_d) (\varphi(\tilde{x} + S_1 e_d) - \varphi(\tilde{x} + s e_d)) \ge 
\frac{S^2_1 - s^2}{q (2q)^{1 - \alpha/2}}.
\end{equation}
By (\ref{S_2_S_1N}), for $s \in \left(S_1- \frac 12(S_2-S_1), S_1\right)$, we get 
\begin{eqnarray}
|s - x_d| &\le & S_2-\left(S_1- \frac 12(S_2-S_1)\right)
 = \frac 32(S_2-S_1)
\le \frac 32 \frac q{S_2}. \label{under_S_1_estimates_3}
\end{eqnarray}
By (\ref{under_S_1_estimates_2}) and (\ref{under_S_1_estimates_3}) we get
\begin{eqnarray}
&&
\int_{S_1 -\frac 12(S_2-S_1)}^{S_1 -\frac 13(S_2-S_1)}
\frac{\lambda(\tilde{x} + s e_d) (\varphi(\tilde{x} + S_1 e_d) - \varphi(\tilde{x} + s e_d))}{|s - x_d|^{1 + \alpha}} \, ds
\nonumber\\
\nonumber%\label{under_S_1_estimates_4} 
&\ge&
\left(\frac {3q}{2S_2}\right)^{-1-\alpha}\frac{S_1}{ q (2q)^{1 - \alpha/2}}
\int_{S_1 -\frac 12(S_2-S_1)}^{S_1 -\frac 13(S_2-S_1)}
(S_1 - s) \, ds\\
&=&\nonumber
   \frac{8^{\alpha/2}}{3^{1+\alpha}}S_2^{\alpha}\frac{S_1S_2}{q^{3 + \alpha/2}}\left( \frac 12\left(\frac 14-\frac 19\right)\right)(S_2-S_1)^2\\
	&=&\nonumber
  \frac{8^{\alpha/2}}{3^{1+\alpha}}\frac5{72}S_2^{\alpha}\frac{S_1S_2}{q^{3 + \alpha/2}} \frac {q^2} {(S_2+S_1)^2}\\
&\ge&\nonumber
  \frac{8^{\alpha/2-1}}{3^{3+\alpha}} \frac{S_2^{\alpha}}{q^{1 + \alpha/2}},\end{eqnarray}
since $S_1/S_2\ge \sqrt{5/12}$ implies $S_2S_1 (S_2+S_1)^{-2} \ge 1/5$.
\end{proof}

\begin{lemma}
\label{estimates_double}
Assume that $|x| \in [r - \eps, r - 3\eps/4]$, $x_d \ge 0$, $\theta \in \calG(r,\eps)$. Let $S_1= S_1(x)$ and $S_2= S_2(x)$. Then there exists a constant $\kappa_2$ such that for any $\rho \in (0,1/4]$ we have
\begin{eqnarray*}
&& \int_{|t| \le \rho \eps}
\frac{\left(\lambda(x + t e_d) (\varphi(x + t e_d) - \varphi(x)) 
+ \lambda(x - t e_d) (\varphi(x - t e_d) - \varphi(x))\right)}{|t|^{1+\alpha}} \, dt\\ 
&\le& \kappa_2 \rho^{2 - \alpha} \frac{S_2^\alpha}{q^{1+\alpha/2}}.
\end{eqnarray*}
\end{lemma}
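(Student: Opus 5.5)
The plan is to split the integrand so that one piece is a second difference of $\varphi$, which is controlled by (\ref{estimates_double_4}), and the other is a product of first differences, controlled by (\ref{lambda}) and (\ref{varphi}). For fixed $t$ I write
\begin{equation*}
\lambda(x + t e_d) (\varphi(x + t e_d) - \varphi(x)) + \lambda(x - t e_d) (\varphi(x - t e_d) - \varphi(x)) = I_1(t) + I_2(t),
\end{equation*}
where
\begin{equation*}
I_1(t) = \lambda(x + t e_d)\bigl(\varphi(x + t e_d) + \varphi(x - t e_d) - 2\varphi(x)\bigr)
\end{equation*}
and
\begin{equation*}
I_2(t) = \bigl(\lambda(x - t e_d) - \lambda(x + t e_d)\bigr)\bigl(\varphi(x - t e_d) - \varphi(x)\bigr).
\end{equation*}

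\emph{Step 1: the earlier estimates apply.} For $|t| \le \rho\eps \le \eps/4$ and $|x| \le r - 3\eps/4$ we have $x_d + |t| \le |x| + |t| \le r - \eps/2 < r$. Hence $x_d + |t| < S_2(x)$, so (\ref{lambda}), (\ref{varphi}) and (\ref{estimates_double_4}) can all be used. The same inequality gives
\begin{equation*}
S_2^2 - (x_d + |t|)^2 \ge r^2 - (|x| + |t|)^2 \ge r^2 - (r - \eps/2)^2 \approx q .
\end{equation*}

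\emph{Step 2: the three comparisons with $q$.}
\begin{itemize}
\item Since $|x| \ge r - \eps$, we get $|x \pm t e_d| \ge r - 5\eps/4$, hence $\lambda(x \pm t e_d) \lesssim q^{\alpha/2}$.
\item Since $|x| \le r - 3\eps/4$, we get $S_2^2 \ge r^2 - |x|^2 \gtrsim q$, hence $1/q^2 \lesssim S_2^2/q^3$.
\item Since $S_2 \le r$ and $q = \eps(2r - \eps) \ge r\eps$, we get $\eps S_2 \lesssim q$.
\end{itemize}

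\emph{Step 3: bounding $I_1$ and $I_2$.} Using $\lambda \ge 0$ together with (\ref{estimates_double_4}) and the first two comparisons,
\begin{equation*}
I_1(t) \lesssim q^{\alpha/2}\, t^2 \left(\frac{S_2^2}{q^3} + \frac{1}{q^2}\right) \lesssim t^2 S_2^2 q^{\alpha/2 - 3}.
\end{equation*}
The possible negativity of the second difference is harmless here, since we only need an upper bound. Using (\ref{lambda}), (\ref{varphi}) and Step 1,
\begin{equation*}
|I_2(t)| \lesssim q^{\alpha/2 - 1}\, |t| S_2 \cdot \frac{|t| S_2}{q^2} = t^2 S_2^2 q^{\alpha/2 - 3}.
\end{equation*}

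\emph{Step 4: integrating.} Integrating against $|t|^{-1-\alpha}$ over $|t| \le \rho\eps$ gives
\begin{equation*}
\int_{|t| \le \rho\eps} \frac{I_1(t) + I_2(t)}{|t|^{1+\alpha}}\, dt \lesssim (\rho\eps)^{2-\alpha} S_2^2 q^{\alpha/2 - 3} = \rho^{2-\alpha}\, \frac{S_2^\alpha}{q^{1 + \alpha/2}} \left(\frac{\eps S_2}{q}\right)^{2-\alpha}.
\end{equation*}
By the third comparison in Step 2, $(\eps S_2/q)^{2-\alpha}$ is bounded by an absolute constant, which gives the claim with $\kappa_2$ depending only on $\alpha$.

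The step that needs the most care is Step 2: every comparison with $q$ must hold uniformly in $r$, $\eps$ and the position of $x$. This is where both restrictions are used, $|x| \ge r - \eps$ for the bound on $\lambda$ and $|x| \le r - 3\eps/4$ for the lower bound on $S_2^2$ and the condition $x_d + |t| < S_2$.
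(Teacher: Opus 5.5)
Your proof is correct and follows the paper's argument. You use the same splitting into a $\lambda$-weighted second difference of $\varphi$ plus a product of first differences (the mirror image, with $\lambda(x+te_d)$ in place of $\lambda(x-te_d)$), bounded via (\ref{estimates_double_4}), (\ref{lambda}) and (\ref{varphi}) together with $\lambda\lesssim q^{\alpha/2}$, $S_2^2\gtrsim q$ and $\eps S_2\le q$.

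One small correction in Step 1: $x_d+|t|<S_2(x)$ does not follow merely from $x_d+|t|<r$. It follows from $|\tilde{x}|^2+(x_d+|t|)^2\le(|x|+|t|)^2<r^2$, which is exactly the content of your second display.
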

\begin{proof} In the whole proof we assume that $\rho \in (0,1/4]$ and $t \in (-\rho \eps, \rho \eps)$. Since $|x|\le r-3\eps/4$ we have $x_d \le S_{**}= S_{**}(x)=\sqrt{(r-3\eps/4)^2-|\tilde{x}|^2}$.  
 Hence for $|t|\le \rho\eps\le \eps/4$ we obtain 
$$x_d + |t| \le S_{**}+ \eps/4  = S_2+ S_{**}-S_2 + \eps/4 < S_2,$$
since $$S_2-S_{**}= \frac  {S^2_2-S^2_{**}}{S_2+S_{**}}\ge \frac{ r^2 -(r-3\eps/4)^2}{2r}>\eps/4.$$
%%%%%%%%%%%%%%%%%%%%%%%%%%%%%%%
Moreover,  we have
\begin{equation} \frac43 S_2^2=\frac43 (r^2-|\tilde{x}|^2)\ge \frac43\left (r^2-(r-3\eps/4)^2\right)\ge q. \label{qest}\end{equation}

We need to properly estimate 
\begin{eqnarray}
\label{estimates_double_0}
&& \left(\lambda(x + t e_d) (\varphi(x + t e_d) - \varphi(x)) 
+ \lambda(x - t e_d) (\varphi(x - t e_d) - \varphi(x))\right)\\
\label{estimates_double_1}
&=& \lambda(x - t e_d) \left(\varphi(x + t e_d) + \varphi(x - t e_d) - 2 \varphi(x)\right)\\
\label{estimates_double_2}
&+& \left(\lambda(x + t e_d) - \lambda(x - t e_d)  \right) \left(\varphi(x + t e_d) - \varphi(x) \right).
\end{eqnarray}
First we will consider (\ref{estimates_double_1}). Applying \eqref{estimates_double_4} we have, by \eqref{qest},
\begin{equation}
\label{estimates_double_4n}
\varphi(x + t e_d) + \varphi(x - t e_d) - 2 \varphi(x)  \le 16 \frac{S_2^2} {q^3}t^2.
\end{equation}

We also have
$$
|x + t e_d|^2 \ge |x|^2 - 2 |t| x_d \ge (r - \eps)^2 - 2 \rho \eps r \ge (r - 2 \eps)^2,
$$
so
$$
\lambda(x + t e_d) \le \lambda((r - 2 \eps) e_d) \le (4 \eps r)^{\alpha/2} \le 4 (\eps r)^{\alpha/2} \le 4 q^{\alpha/2} .
$$
By this and (\ref{estimates_double_4n}) we obtain
\begin{eqnarray}
\nonumber
\int_{|t| \le \rho \eps}
\frac{\lambda(x - t e_d) \left(\varphi(x + t e_d) + \varphi(x - t e_d) - 2 \varphi(x)\right)}{|t|^{1 + \alpha}} \, dt
&\le& 64 q^{\alpha/2} \frac{S_2^2} {q^3} 
\int_{|t| \le \rho \eps} \frac{t^2 dt}{|t|^{1+\alpha}}\\ 
\nonumber
&=& \frac{128}{2 - \alpha} \rho^{2 - \alpha} \eps^{2 - \alpha}  q^{\alpha/2} \frac{S_2^2} {q^3}\\ 
\label{estimates_double_5}
&\le& \frac{128}{2 - \alpha} \rho^{2 - \alpha}  \frac{S_2^\alpha} {q^{1+\alpha/2}}. 
\end{eqnarray}
Here we used $\eps^{2 - \alpha}  S_2^2= (\eps S_2)^{2 - \alpha} {S_2^{\alpha}}\le
 (r\eps)^{2 - \alpha} {S_2^\alpha}\le q^{2 - \alpha} {S_2^\alpha}$.

Next, we consider (\ref{estimates_double_2}). We have
\begin{eqnarray*}
|\tilde{x}|^2 + (x_d + |t|)^2 
&=& |x|^2 + 2 x_d |t| + t^2\\
&\le& (r - 3 \eps/4)^2 + 2 r \eps/4 + (\eps/4)^2\\
&<& (r - \eps/4)^2.
\end{eqnarray*}
This implies that  $$(S^2_2-(x_d+|t|)^2\ge r^2 - (r - \eps/4)^2\ge q/4,$$ which together with \eqref{lambda} leads to the following estimate
\begin{eqnarray*}    
\left| \lambda(x + t e_d) - \lambda(x - t e_d) \right| 
&\le& 8\alpha q^{\alpha/2-1}|t|S_2.\end{eqnarray*}
This and  \eqref{varphi} yield
\begin{eqnarray*}
 \left|\lambda(x + t e_d) - \lambda(x - t e_d)  \right| \left|\varphi(x + t e_d) - \varphi(x) \right|
&\le& 48 \alpha  |t|^2  \frac {S_2^2}{q^{3-\alpha/2}}.
\end{eqnarray*}
By the same arguments, which lead to (\ref{estimates_double_5}) we obtain
\begin{equation*}
\int_{|t| \le \rho \eps}
\frac{\left(\lambda(x + t e_d) - \lambda(x - t e_d)  \right) \left(\varphi(x + t e_d) - \varphi(x - t e_d) \right)}{|t|^{1 + \alpha}} \, dt
\le 
\alpha\frac{96}{2 - \alpha} \rho^{2 - \alpha}  \frac{S_2^\alpha} {q^{1+\alpha/2}}, 
\end{equation*}
which together with (\ref{estimates_double_0}-\ref{estimates_double_2}) and (\ref{estimates_double_5}) imply the assertion of the lemma.
\end{proof}

\begin{lemma}
\label{estimates_S_1_S_2_minus_W}
Assume that $|x| \in [r - \eps, r - 3\eps/4]$, $x_d \ge 0$, $\theta \in \calG(r,\eps)$. Let $S_1= S_1(x)$ and $S_2= S_2(x)$. For any $t > 0$ put $W_{x, t} = [x_d - t, x_d + t]$. Then there is a constant $\kappa_3$ such that for any $\rho \in (0,1/4]$ we have
$$
\int_{(S_1,S_2) \setminus W_{x, \rho \eps}} 
\frac{\lambda(\tilde{x} + s e_d) (\varphi(\tilde{x} + s e_d) - \varphi(x)) }{|s - x_d|^{1 + \alpha}} \, ds
\le \frac{\kappa_3}{\rho^{1 + \alpha}} 
\frac{r S^{\alpha}_2 (\|\theta\|_{\infty} - 1/q)}{ \eps^{1 + \alpha}}.
$$
\end{lemma}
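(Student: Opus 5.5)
We bound the integrand pointwise and then integrate the kernel. On $(S_1,S_2)$ the point $\tilde{x}+se_d$ satisfies $|\tilde{x}+se_d|^2 = |\tilde{x}|^2+s^2 \in ((r-\eps)^2, r^2)$. Condition (i) of Definition \ref{Class_A_1} gives $\theta(|x|^2)=1/(r^2-|x|^2)$ whenever $|x|\le r-\eps$. By (iii) and (i), $\theta$ is nondecreasing on $[(r-\eps)^2,r^2)$, so in every case $\varphi(x)\ge \theta((r-\eps)^2)=1/q$; for $|x|\ge r-\eps$ this uses that $|x|^2\ge(r-\eps)^2$. It follows that
$$
\varphi(\tilde{x}+se_d)-\varphi(x)\le \|\theta\|_\infty - \frac1q
$$
for all $s\in(S_1,S_2)$. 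Where this difference is negative, the integrand is nonpositive (because $\lambda\ge0$), and we may simply drop those contributions from an upper bound. The numerator is therefore bounded by $\lambda(\tilde{x}+se_d)(\|\theta\|_\infty-1/q)$.

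Next we bound $\lambda$. For $s\in(S_1,S_2)$,
$$
\lambda(\tilde{x}+se_d)=(S_2^2-s^2)^{\alpha/2}\le (S_2^2-S_1^2)^{\alpha/2}\le q^{\alpha/2}.
$$
Here $S_2^2-S_1^2=q$ if $|\tilde{x}|\le r-\eps$, and $S_2^2\le q$ otherwise. We also have $q\le 2r\eps$. The remaining factor is
$$
\int_{(S_1,S_2)\setminus W_{x,\rho\eps}} \frac{ds}{|s-x_d|^{1+\alpha}}.
$$
Off $W_{x,\rho\eps}$ the kernel is at most $(\rho\eps)^{-1-\alpha}$, so this integral is at most $(\rho\eps)^{-1-\alpha}(S_2-S_1)$. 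By (\ref{S_2_S_1N}), $S_2-S_1\le q/S_2$.

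Combining these bounds, the integral in the lemma is at most
$$
\frac{q^{1+\alpha/2}(\|\theta\|_\infty-1/q)}{S_2\,\rho^{1+\alpha}\eps^{1+\alpha}}.
$$
It remains to compare this with $rS_2^\alpha$, i.e.\ to show $q^{1+\alpha/2}\lesssim r S_2^{1+\alpha}$. This is where the hypothesis $|x|\le r-3\eps/4$ is used, via (\ref{qest}): it gives $S_2^2\ge \tfrac34 q$, so $S_2\gtrsim q^{1/2}$. Then
$$
q^{1+\alpha/2}=q^{1/2}\,q^{(1+\alpha)/2}\lesssim q^{1/2}S_2^{1+\alpha},
$$
and $q^{1/2}\le (2r\eps)^{1/2}\le \sqrt{2}\, r$ since $\eps\le r$. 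This yields the claimed estimate with $\kappa_3$ an absolute constant (depending at most on $\alpha$).

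The only delicate points are the sign argument in the first paragraph, which needs the monotonicity of $\theta$ beyond $(r-\eps)^2$ in order to replace the difference by $\|\theta\|_\infty-1/q$, and the use of (\ref{qest}) in the last step to absorb the factor $1/S_2$.
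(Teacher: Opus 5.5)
Your proof is correct and follows the paper's argument. You bound $\varphi(\tilde{x}+se_d)-\varphi(x)\le\|\theta\|_\infty-1/q$, bound the kernel by $(\rho\eps)^{-1-\alpha}$ off $W_{x,\rho\eps}$, and control the length of the interval by $S_2-S_1\le q/S_2$ together with (\ref{qest}). The only difference is cosmetic: the paper uses the trivial bound $\lambda(\tilde{x}+se_d)\le S_2^\alpha$, so it needs (\ref{qest}) only to get $q/S_2\le\frac43 S_2\le\frac43 r$, and thereby avoids your extra step of converting $q^{\alpha/2}$ into $S_2^\alpha$.
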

\begin{proof}
For $s \in (S_1,S_2)$ we clearly have 
$\lambda(\tilde{x} + s e_d) = (S_2^2-s^2)^{\alpha/2}\le S^{\alpha}_2$ and $\varphi(\tilde{x} + s e_d) - \varphi(x) \le \|\theta\|_{\infty} - 1/q$. For $s \in (S_1,S_2) \setminus W_{x, \rho \eps}$ we also have $|s - x_d|^{-1 - \alpha} \le \rho^{-1 - \alpha} \eps^{-1 - \alpha}$. We finally observe that  for $|x| \in [r - \eps, r - 3\eps/4]$ we have  $q\le \frac43S^2_2\le \frac43S_2r$, see \eqref{qest}. The above estimates and (\ref{S_2_S_1N}) give the assertion of the lemma.
\end{proof}

\begin{lemma}
\label{S_**_N} Let $x \in B(0,r - \eps)$ and let $S_2= S_2(x)$, $S_{**}= S_{**}(x)$. 
We assume that $N \ge 4$ is such that $\frac{1}{N} < \frac{\eps}{r(r \vee 1)^2}$, $\theta \in \calG(r,\eps)$ and $K$ in the definition of $\theta$ satisfies $K < \eps/N$ and $\|\theta\|_{\infty} < \theta((r - \eps)^2) + 1/N^{4+\alpha}$. Under the above assumptions on $\theta$ and $N$  for $s \in (S_{**},S_2)$ we have
$$
\theta(|\tilde{x} + s e_d|^2) = \|\theta\|_{\infty} < \frac{1}{r^2 - |\tilde{x}|^2 - S_{**}^2}.
$$
\end{lemma}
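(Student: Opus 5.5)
The plan is to check two facts directly. First, the argument $|\tilde{x}+se_d|^2$ lies in the region where $\theta$ equals the constant $C_0=\|\theta\|_\infty$. Second, this constant is strictly smaller than $1/(r^2-|\tilde{x}|^2-S_{**}^2)$.

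\emph{Step 1 (where $\theta$ is constant).} Let $s\in(S_{**},S_2)$. Then $|\tilde{x}+se_d|^2=|\tilde{x}|^2+s^2 > |\tilde{x}|^2+S_{**}^2$. Since $|\tilde{x}|\le|x|<r-\eps<r-3\eps/4$, the bracket in $S_{**}$ is positive, so $|\tilde{x}|^2+S_{**}^2=(r-3\eps/4)^2$. Because $K<\eps/N\le\eps/4$, we have $(r-\eps+K)^2<(r-3\eps/4)^2$. Also $|\tilde{x}|^2+s^2<r^2$. Hence $|\tilde{x}+se_d|^2\in[(r-\eps+K)^2,r^2)$, and Definition \ref{Class_A_1}(i) gives $\theta(|\tilde{x}+se_d|^2)=C_0$.

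Next I show $C_0=\|\theta\|_\infty$. On $[0,(r-\eps)^2]$, $\theta=1/(r^2-v)$ is increasing. By (iii), $\theta$ is strictly increasing on $[(r-\eps)^2,(r-\eps+K)^2)$. By continuity (ii), $\theta=C_0$ on the last interval. So $\theta$ is nondecreasing on $[0,r^2)$ and its supremum is $C_0$.

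\emph{Step 2 (the strict inequality).} By construction $r^2-|\tilde{x}|^2-S_{**}^2=r^2-(r-3\eps/4)^2$. The hypothesis gives
\[
\|\theta\|_\infty<\theta((r-\eps)^2)+N^{-4-\alpha}=\frac1q+N^{-4-\alpha}.
\]
It therefore suffices to show
\[
N^{-4-\alpha}\le \frac{1}{r^2-(r-3\eps/4)^2}-\frac1{r^2-(r-\eps)^2}.
\]
The right-hand side equals
\[
\frac{(r-3\eps/4)^2-(r-\eps)^2}{\bigl(r^2-(r-3\eps/4)^2\bigr)\,q}
=\frac{(\eps/4)(2r-7\eps/4)}{\bigl(r^2-(r-3\eps/4)^2\bigr)\,q}.
\]
I bound it from below as follows. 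The numerator is at least $(\eps/4)\,r$, using $\eps\le r/4$. Each factor in the denominator is at most $2r\eps$. This gives the lower bound $(\eps/4)\,r/(4r^2\eps^2)=1/(16 r\eps)$.

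\emph{Step 3 (using the choice of $N$).} It remains to check $N^{-4-\alpha}\le 1/(16r\eps)$. From $1/N<\eps/(r(r\vee1)^2)$ and $N\ge4$, we get $N^{-4-\alpha}\le N^{-2}\cdot N^{-2}\le N^{-2}/16$. So it suffices that $N^{-2}\le 1/(r\eps)$. Now
\[
N^{-2}<\frac{\eps^2}{r^2(r\vee1)^4}\le\frac{\eps^2}{r^2}.
\]
Since $\eps\le r/4$, this is at most $\eps/(4r)\cdot 1/\eps\cdot\eps$, so it suffices to check $\eps^2/r^2\le 1/(r\eps)$, i.e.\ $\eps^3\le r$. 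This is the one place where the factor $(r\vee1)^2$ matters: when $r\ge1$, $\eps^3\le r^3/64$, and $\eps^2/r^2$ must be compared with $1/(r\eps)$ using $(r\vee1)^4$ in the denominator. Explicitly, I would use
\[
N^{-2}\le\frac{\eps^2}{r^2(r\vee1)^4}\le\frac{1}{r\eps},
\]
which is equivalent to $\eps^3\le r(r\vee1)^4$. This holds since $\eps^3\le r^3/64\le r(r\vee1)^2$.

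The main obstacle is only bookkeeping: making sure the smallness of $1/N$ beats the gap $\frac{1}{r^2-(r-3\eps/4)^2}-\frac1q\gtrsim 1/(r\eps)$ uniformly in $r$ and $\eps$. I would keep the crude bounds above, which should close with room to spare.
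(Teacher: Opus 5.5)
Your proof is correct and follows essentially the paper's argument. You show $|\tilde{x}+se_d|^2>(r-3\eps/4)^2\ge(r-\eps+K)^2$, so $\theta$ equals its constant maximum there; your monotonicity check that $C_0=\|\theta\|_\infty$ spells out what the paper leaves implicit. You then bound the gap $\frac{1}{r^2-(r-3\eps/4)^2}-\frac{1}{q}$ below by a multiple of $1/(r\eps)$ and show $N^{-4-\alpha}$ is smaller.

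The paper closes this last step more directly, via $\frac{1}{N}<\frac{\eps}{r(r\vee1)^2}<\frac{1}{8r\eps}$ using $(r\vee1)^2\ge16\eps^2$; your squared route through $N^{-2}\le 1/(r\eps)$ also works. However, you should delete the intermediate reduction to $\eps^3\le r$, since that inequality can fail. The condition $\eps^3\le r(r\vee1)^4$ that you end with is the correct one, and it does hold.
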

\begin{proof}
Since $r \ge 4 \eps$ we get $(r \vee 1)^2 \ge 16 \eps^2 > 8 \eps^2$, so $1 > 8 \eps^2/(r \vee 1)^2$. This and the assumption $\frac{1}{N} < \frac{\eps}{r(r \vee 1)^2}$ implies
\begin{equation*}
\frac{1}{8 r \eps} > \frac{\eps}{r (r \vee 1)^2} > \frac{1}{N}.
\end{equation*}
We also have
\begin{equation}
\label{r_eps_N}
\frac{1}{r^2 - (r - 3 \eps/4)^2} - \frac{1}{r^2 - (r - \eps)^2} 
=
\frac{\frac{r \eps}{2} - \frac{7 \eps^2}{16}}{\left(2r \eps - \eps^2\right)\left(\frac{3r \eps}{2} - \frac{9 \eps^2}{16} \right)}
\ge \frac{1}{8 r \eps}
> \frac{1}{N}
> \frac{1}{N^{4+\alpha}}.
\end{equation}

Note that for $x \in B(0, r - \eps)$ and $s \in (S_{**},S_2)$ we have 
\begin{equation*}
|\tilde{x} + s e_d|^2 
\ge |\tilde{x}|^2 + S_{**}^2
= \left(r - \eps + \frac{\eps}{4}\right)^2
\ge \left(r - \eps + K\right)^2.
\end{equation*}
It follows that $\theta(|\tilde{x} + s e_d|^2) = \|\theta\|_{\infty}$. By this, the assumption $\|\theta\|_{\infty} < \theta((r - \eps)^2) + 1/N^{4+\alpha}$ and (\ref{r_eps_N}) we obtain
$$
\theta(|\tilde{x} + s e_d|^2) 
< \theta((r - \eps)^2) + 1/N^{4+\alpha}
< \frac{1}{r^2 - (r - 3 \eps/4)^2}
= \frac{1}{r^2 - |\tilde{x}|^2 - S_{**}^2}.
$$
\end{proof}

\begin{lemma}
\label{estimates_double1}
Assume that $|x| \in [r - \eps, r)$, $x_d \ge 0$. Let $S_1= S_1(x)$, $S_2= S_2(x)$ and $p(x)=S_2-x_d$. Then there exists a constant $\kappa_4$ such that 
\begin{eqnarray*}
\int_{|t| \le p(x)}
\frac{|\lambda(x + t e_d) - \lambda(x - t e_d)| |\varPhi(x + t e_d) - \varPhi(x) |}{|t|^{1+\alpha}} \, dt 
&\le& \kappa_4 \frac {S^\alpha_2}{q^{1+\alpha/2}}.
\end{eqnarray*}
\end{lemma}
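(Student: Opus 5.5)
Write $x=\tilde{x}+x_d e_d$, $S_2=S_2(x)$, $p=p(x)=S_2-x_d>0$. For $|t|\le p$ both $x\pm te_d$ lie in the closed ball: $x+te_d$ because $x_d+t\le S_2$, and $x-te_d$ because $x_d\ge0$ gives $|x_d-t|\le x_d+|t|\le S_2$. So both factors are controlled by the mean value estimates already available. The plan is to bound each factor separately, using \eqref{lambda} for the $\lambda$-difference and \eqref{diff1} for the $\varPhi$-difference, and then integrate. The difficulty is that near $|t|=p$ the bound \eqref{lambda} degenerates, since $(S_2^2-(x_d+|t|)^2)^{\alpha/2-1}$ blows up when $\alpha<2$. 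So a cruder estimate is needed there.

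\textbf{Step 1: the two factors.} By \eqref{diff1} with $s=x_d+t$,
$$|\varPhi(x+te_d)-\varPhi(x)|\le 6|t|\,S_2/q^2.$$
For the $\lambda$ factor we use the trivial bound $|\lambda(x+te_d)-\lambda(x-te_d)|\le 2\lambda(\tilde{x}+x_d e_d)\vee\ldots\le 2S_2^{\alpha}$, together with \eqref{lambda}. Note
$$S_2^2-(x_d+|t|)^2=(p-|t|)(S_2+x_d+|t|)\ge (p-|t|)S_2,$$
so \eqref{lambda} gives
$$|\lambda(x+te_d)-\lambda(x-te_d)|\le 2\alpha\,|t|\,S_2^{\alpha/2}(p-|t|)^{\alpha/2-1}.$$
We also need the size of $p$. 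Since $|x|\ge r-\eps$, we have $x_d^2\ge (r-\eps)^2-|\tilde{x}|^2=S_2^2-q$, hence
$$p=\frac{S_2^2-x_d^2}{S_2+x_d}\le \frac{q}{S_2}.$$

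\textbf{Step 2: integrate with a split at $|t|=p/2$.} On $|t|\le p/2$ we have $(p-|t|)^{\alpha/2-1}\le (p/2)^{\alpha/2-1}$. The integrand is then $\lesssim |t|^{1-\alpha}S_2^{1+\alpha/2}p^{\alpha/2-1}q^{-2}$, and integrating gives
$$\lesssim p^{2-\alpha}p^{\alpha/2-1}S_2^{1+\alpha/2}q^{-2}=p^{1-\alpha/2}S_2^{1+\alpha/2}q^{-2}.$$
On $p/2\le|t|\le p$ we have $|t|^{-1-\alpha}\le (p/2)^{-1-\alpha}$, and the integrand is $\lesssim p^{-\alpha}(p-|t|)^{\alpha/2-1}S_2^{1+\alpha/2}q^{-2}$. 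Since $\int_{p/2}^{p}(p-t)^{\alpha/2-1}\,dt=\frac{2}{\alpha}(p/2)^{\alpha/2}$ is integrable, this part gives the same bound $p^{1-\alpha/2}S_2^{1+\alpha/2}q^{-2}$, up to a constant depending on $\alpha$. Inserting $p\le q/S_2$ yields
$$\lesssim (q/S_2)^{1-\alpha/2}S_2^{1+\alpha/2}q^{-2}=S_2^{\alpha}q^{-1-\alpha/2},$$
which is the claim.

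\textbf{Main obstacle.} The delicate point is the endpoint singularity of the $\lambda$-difference at $|t|=p$, i.e.\ the exponent $\alpha/2-1<0$, combined with getting the right power of $p$. The key structural input is $p\le q/S_2$, which comes from $|x|\ge r-\eps$, together with the factorization $S_2^2-(x_d+|t|)^2\ge(p-|t|)S_2$. Without these the powers of $S_2$ and $q$ would not match. The integrability of $(p-t)^{\alpha/2-1}$ needs $\alpha>0$, and the constant $2/\alpha$ is acceptable since constants may depend on $\alpha$. All constants obtained depend only on $\alpha$.
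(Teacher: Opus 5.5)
Your proof is correct and follows essentially the paper's argument: bound the $\varPhi$-difference by \eqref{diff1} and the $\lambda$-difference by \eqref{lambda}, split at $|t|=p(x)/2$, integrate the endpoint singularity $(p(x)-|t|)^{\alpha/2-1}$ on the outer piece, and close with $p(x)\le q/S_2$. One slip: on $p/2\le|t|\le p$ the integrand is $\lesssim p^{1-\alpha}(p-|t|)^{\alpha/2-1}S_2^{1+\alpha/2}q^{-2}$, since $t^2|t|^{-1-\alpha}\lesssim p^{1-\alpha}$, and not $p^{-\alpha}(\cdots)$; only the former yields the $p^{1-\alpha/2}$ you state, and your unused ``trivial bound'' remark can simply be dropped.
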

\begin{proof} 
 For  $|t| \le p(x)$ we have $x_d+|t|\le S_2$ hence, by \eqref{diff1}, 
 we get
\begin{equation}
\label{PhitSq}
\left| \varPhi(x + t e_d) - \varPhi(x) \right| 
\le 6 |t| \frac{S_2}{q^{2} }.
\end{equation}
If $|t|\le p(x)/2$ then $S^2_2-(x_d+|t|)^2\ge \frac12( S^2_2-x_d^2)$ hence by (\ref{lambda}) we get
$$\left| \lambda(x + t e_d) - \lambda(x - t e_d) \right| \le 
c (S^2_2-x_d^2)^{\alpha/2-1}|t|S_2.$$
Combining the above estimates we get
\begin{eqnarray}
&&\int_{|t| \le p(x)/2}
\frac{|\lambda(x + t e_d) - \lambda(x - t e_d)| |\varPhi(x + t e_d) - \varPhi(x) |}{|t|^{1+\alpha}} \, dt \nonumber \\
&\le& c (S^2_2-x_d^2)^{\alpha/2-1}\frac {S^2_2}{q^{2}}\int_{|t| \le p(x)/2}|t|^{1-\alpha}\, dt\nonumber\\.
&\le& c (S^2_2-x_d^2)^{\alpha/2-1}\frac {S^2_2}{q^{2}}(S^2_2-x^2_d)^{2-\alpha}S_2^{\alpha-2}\nonumber\\
&\le& c (S^2_2-S_1^2)^{1-\alpha/2}\frac {S^\alpha_2}{q^{2}}
\le c\frac {S^\alpha_2}{q^{1+\alpha/2}}.\label{lambda_Phi}
\end{eqnarray}

Moreover, by (\ref{lambda}) and (\ref{PhitSq}) we obtain
\begin{eqnarray}
&&\int_{ p(x)/2\le |t| \le p(x)}
\frac{|\lambda(x + t e_d) - \lambda(x - t e_d)| |\varPhi(x + t e_d) - \varPhi(x) |}{|t|^{1+\alpha}}  \, dt \nonumber\\
&\le& c \frac {S^2_2}{q^{2}}(p(x))^{1-\alpha}S_2^{\alpha/2-1}\int_{ p(x)/2\le t \le p(x)}  (S_2-(x_d+t))^{\alpha/2-1}\, dt \nonumber\\
&\le& c\frac {S^\alpha_2}{q^{1+\alpha/2}}. \label{lambda_PhiN}
\end{eqnarray}
%%%%%%
In the above string of inequalities we used that $(S_2-(x_d+p(x)/2))= p(x)/2$ and  $p(x)\le S_2-S_1\le \frac q{S_2}$. Combining 
\eqref{lambda_Phi} and \eqref{lambda_PhiN} completes the proof. 
\end{proof}
\begin{lemma}\label{Case2L}
Assume that $|x| \in [r - \eps, r)$, $x_d \ge 0$. Let $S_1= S_1(x)$, $S_2= S_2(x)$,
 $p(x)=S_2-x_d$ and $W_x = (-p(x)+x_d,p(x)+x_d) $. There is a constant $\kappa_5$ such that
%%%%%%%%%%%%%
$$ 
\int_{(-S_2, S_2) \setminus W_x}
\frac{\lambda(\tilde{x} + s e_d) |\varPhi(\tilde{x} + s e_d) - \varPhi(x)|}{|s - x_d|^{1 + \alpha}} \, ds 
\le \kappa_5 \frac{S^{\alpha}_2\vee q^{\alpha/2}}{q^{1+\alpha/2} }.
$$  
\end{lemma}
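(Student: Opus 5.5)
The plan is to bound the integrand pointwise by exploiting the geometry of the excluded window $W_x$, and then integrate an explicit power of $|s-x_d|$. Since $|x|\ge r-\eps$, we have $x_d\ge S_1$, so by (\ref{S_2_S_1N}) $p(x)=S_2-x_d\le S_2-S_1\le q/S_2$. The set $(-S_2,S_2)\setminus W_x$ is $(-S_2,\,x_d-p(x)]$, because $x_d+p(x)=S_2$. On it we have $|s-x_d|\ge p(x)$ and $s<x_d$.

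\emph{Step 1: bound $\lambda$.} For $s\in(-S_2,x_d-p(x)]$ we factor
$$\lambda(\tilde{x}+se_d)=(S_2-s)^{\alpha/2}(S_2+s)^{\alpha/2}.$$
We have $S_2+s\le 2S_2$. Also $S_2-s=p(x)+(x_d-s)\le 2|s-x_d|$, because $x_d-s\ge p(x)$. Hence
$$\lambda(\tilde{x}+se_d)\le 2^{\alpha}S_2^{\alpha/2}|s-x_d|^{\alpha/2}.$$
This factor is what avoids the trouble of crudely using $\lambda\le S_2^\alpha$: that cruder bound would only give $S_2^{2\alpha}/q^{1+\alpha}$ in the far region, which is too large when $S_2^2>q$.

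\emph{Step 2: bound the difference of $\varPhi$.} Here we use two bounds. Lemma \ref{theta_estimates} gives $|\varPhi(\tilde{x}+se_d)-\varPhi(x)|\le 4/q$, since $\varPhi\ge0$. The estimate (\ref{diff1}), valid for all $s\in(-S_2,S_2)$, gives $|\varPhi(\tilde{x}+se_d)-\varPhi(x)|\le 6|s-x_d|S_2/q^2$.

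\emph{Step 3: integrate.} Put $t=|s-x_d|$ and $t_0=q/S_2$. The integral is then at most
$$c\,S_2^{\alpha/2}\int_{t\ge p(x)} t^{-1-\alpha/2}\min\Bigl(\frac1q,\frac{tS_2}{q^2}\Bigr)\,dt.$$
We split at $t_0$:
\begin{itemize}
\item On $(0,t_0)$ we use the derivative bound. This gives $\frac{S_2}{q^2}\int_0^{t_0}t^{-\alpha/2}\,dt=c\,\frac{S_2}{q^2}t_0^{1-\alpha/2}=c\,\frac{S_2^{\alpha/2}}{q^{1+\alpha/2}}$.
\item On $(t_0,\infty)$ we use the sup bound. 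This gives $\frac1q\int_{t_0}^\infty t^{-1-\alpha/2}\,dt=c\,\frac{t_0^{-\alpha/2}}{q}=c\,\frac{S_2^{\alpha/2}}{q^{1+\alpha/2}}$.
\end{itemize}
Multiplying by the prefactor $S_2^{\alpha/2}$, the total is at most $c\,S_2^{\alpha}/q^{1+\alpha/2}$, which is at most $\kappa_5(S_2^\alpha\vee q^{\alpha/2})/q^{1+\alpha/2}$. The lower cutoff $p(x)$ can simply be dropped, since the integrand is nonnegative.

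The only delicate point is Step 1, i.e. noticing that away from $W_x$ the distance from $s$ to the endpoint $S_2$ of the chord is comparable to $|s-x_d|$. Everything else is routine one-dimensional integration. One should also check that (\ref{diff1}) is legitimately applied for negative $s$; this holds because the mean value point stays in $(-S_2,S_2)$, where $|\tilde{x}+\xi e_d|<r$ and Lemma \ref{theta_estimates} applies to $\Theta'$.
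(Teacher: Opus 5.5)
Your proof is correct. It differs from the paper's in how the integration is organized, not in the pointwise input.

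Your Steps 1--2 give exactly the two bounds the paper uses, \eqref{lambdaPhi} and \eqref{lambdaPhi1}. They rest on the same observation that $x_d-s\le S_2-s\le 2(x_d-s)$ off $W_x$.

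The paper then integrates these bounds through a geometric case analysis. It separates $S_2\ge 2x_d$ from $S_2\le 2x_d$. In the latter case it splits the range into $(-S_2,0)$, $(0,S_1)$ and $(S_1,S_2)\setminus W_x$, with further subcases according to whether $x_d\ge (S_1+S_2)/2$. Along the way it uses $S_2^2-S_1^2=q$ when $S_1>0$ and $x_d-S_1\le q/S_2$.

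You instead take the minimum of the two bounds and split once, at the intrinsic scale $t_0=q/S_2$ in the variable $t=x_d-s$, integrating over all of $(0,\infty)$. This buys three things:
\begin{enumerate}
\item The argument is uniform and shorter.
\item It gives the sharper bound $c\,S_2^{\alpha}/q^{1+\alpha/2}$. So the $\vee\, q^{\alpha/2}$ in the statement is an artifact of the cruder $c/q$ estimates the paper uses where $x_d$ is small relative to $S_2$, namely the case $S_2\ge 2x_d$ and \eqref{est-S_2}.
\item It never uses $S_1$, the inequality $p(x)\le q/S_2$, or the hypothesis $|x|\ge r-\eps$. It needs only $x_d\ge 0$, $|x|<r$ and the global bounds on $\Theta$ and $\Theta'$ behind \eqref{diff1}.
\end{enumerate}

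In the paper's application, Case 2 of Proposition~\ref{F_b_estimate}, the extra sharpness is not needed. There the term $\text{I}$ is already of order $1/q$, so the target $(S_2^\alpha\vee q^{\alpha/2})/q^{1+\alpha/2}$ is what is actually required.
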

\begin{proof}
For $s \in (-S_2,S_2)$ by Lemma \ref{theta_estimates} we get
\begin{equation*} \label{diff3}
\left| \varPhi(\tilde{x} + s e_d) - \varPhi(x) \right| 
\le  \frac{4}{q }.
\end{equation*}
Next, we observe that for $s\in (-S_2,S_2)\setminus W_x= (-S_2, 2x_d-S_2)$ we have 
$$x_d-s\le S_2-s\le 2(x_d-s). $$
Hence, for $s\in(-S_2,S_2)\setminus W_x$, it follows from \eqref{diff1} that
\begin{eqnarray}
\lambda(\tilde{x} + s e_d) |\varPhi(\tilde{x} + s e_d) - \varPhi(x)|&\le& 6 |s-x_d| \frac{S_2}{q^{2} }(S^2_2-s^2)^{\alpha/2}\nonumber\\
&\le& c \frac{S^{1+\alpha/2}_2}{q^{2} }(x_d-s)^{1+\alpha/2}\label{lambdaPhi}\end{eqnarray}
and
\begin{eqnarray}
\lambda(\tilde{x} + s e_d) |\varPhi(\tilde{x} + s e_d) - \varPhi(x)|&\le&\frac{4}{q }(S^2_2-s^2)^{\alpha/2}\nonumber\\
&\le&c \frac{S^{\alpha/2}_2}{q }(x_d-s)^{\alpha/2}.\label{lambdaPhi1}
\end{eqnarray}
Now we make the assumption $S_2\ge 2x_d$. Then, by \eqref{lambdaPhi1}, we obtain
\begin{eqnarray*}
\int_{-S_2}^{2x_d-S_2} 
\frac{\lambda(\tilde{x} + s e_d) |\varPhi(\tilde{x} + s e_d) - \varPhi(x)|}{|s - x_d|^{1 + \alpha}} \, ds 
&\le&  c\frac{S^{\alpha}_2}{q }\int_{S_2- 2x_d}^{S_2} (s+x_d)^{-\alpha-1}\, ds\\
&\le& c\frac{S^{\alpha}_2}{q }\frac {2x_d} {(S_2- x_d)^{1+\alpha}} \\
&\le& c\frac{1}{q } 
\le c\frac{S^{\alpha}_2\vee q^{\alpha/2}}{q^{1+\alpha/2} }, 
\end{eqnarray*}
which completes the proof in the case $S_2\ge 2x_d$. 

Now we assume that  $S_2\le 2x_d$. 
In this case,  by \eqref{lambdaPhi1}, we observe that  
\begin{eqnarray}
\int_{-S_2}^{0} 
\frac{\lambda(\tilde{x} + s e_d) |\varPhi(\tilde{x} + s e_d) - \varPhi(x)|}{|s - x_d|^{1 + \alpha}} \, ds 
&\le&  c\frac{S^{\alpha}_2}{q }\int_{0}^{S_2} (s+x_d)^{-\alpha-1}\, ds\nonumber\\
&\le& c\frac{S^{\alpha}_2}{q }\frac {S_2} {x_d^{1+\alpha}} \nonumber\\
&\le& c\frac{1}{q } 
\le c\frac{S^{\alpha}_2\vee q^{\alpha/2}}{q^{1+\alpha/2} }.\label{est-S_2}
\end{eqnarray}
Next, by \eqref{lambdaPhi}, we infer that
\begin{eqnarray}
\int_{(S_1, S_2) \setminus W_x}
\frac{\lambda(\tilde{x} + s e_d) |\varPhi(\tilde{x} + s e_d) - \varPhi(x)|}{|s - x_d|^{1 + \alpha}} \, ds 
&\le&  c\frac{S^{1+\alpha/2}_2}{q^{2} } \int_{S_1}^{ x_d} (x_d - s)^{-\alpha/2}\, ds \nonumber\\
&\le& c\frac{S^{\alpha}_2}{q^{1+\alpha/2} }.\label{est_S_1toS_2}, 
\end{eqnarray} 
which is the consequence of the estimate
$$\int_{S_1}^{ x_d} (x_d - s)^{-\alpha/2}\, ds= c(x_d-S_1)^{1-\alpha/2}\le c\frac{(S^2_2-S^2_1)^{1-\alpha/2}}{S_2^{1-\alpha/2}}\le c \left(\frac{q}{S_2}\right)^{1-\alpha/2}.$$ 
In the case  $S_1 > 0$ and $ x_d\ge \frac{S_1+S_2}2$, it follows from \eqref{lambdaPhi1} that
\begin{eqnarray}
\int_{0}^{S_1} 
\frac{\lambda(\tilde{x} + s e_d) |\varPhi(\tilde{x} + s e_d) - \varPhi(x)|}{|s - x_d|^{1 + \alpha}} \, ds 
&\le&  c\frac{S^{\alpha/2}_2}{q }\int_{0}^{ S_1} (x_d - s)^{-\alpha/2-1}\, ds \nonumber\\
&\le& c\frac{S^{\alpha}_2}{q } (S^2_2-S^2_1)^{-\alpha/2}\nonumber\\
&\le& c\frac{S^{\alpha}_2}{q^{1+\alpha/2} }. \label{est_0toS_1}
\end{eqnarray} 
In the last step we used the fact that $S^2_2-S^2_1=q$ if $S_1>0$. 
%%%%%%%%%%%%%%%%%%%%%
Combining  the estimates \eqref{est_S_1toS_2} and \eqref{est_0toS_1}  we have in the case $x_d\ge \frac{S_1+S_2}2$ that
\begin{eqnarray}
\int_{(0, S_2) \setminus W_x}
\frac{\lambda(\tilde{x} + s e_d) |\varPhi(\tilde{x} + s e_d) - \varPhi(x)|}{|s - x_d|^{1 + \alpha}} \, ds 
&\le&  c\frac{S^{\alpha}_2}{q^{1+\alpha/2} }.\label{0toS_2W}
\end{eqnarray}

Now we consider  the case $S_2/2\le x_d< \frac{S_1+S_2}2$.   Then by \eqref{lambdaPhi1}    we have
\begin{eqnarray*}
\int_{(0, S_2) \setminus W_x} 
\frac{\lambda(\tilde{x} + s e_d) |\varPhi(\tilde{x} + s e_d) - \varPhi(x)|}{|s - x_d|^{1 + \alpha}} \, ds 
&\le&  c\frac{S^{\alpha/2}_2}{q }\int_{0}^{ 2x_d-S_2} (x_d - s)^{-\alpha/2-1}\, ds \nonumber\\
&=& c\frac{S^{\alpha/2}_2}{q }( (S_2 - x_d)^{-\alpha/2}- x_d^{-\alpha/2}) \nonumber\\
&\le& c\frac{S_2^{\alpha/2}}{q (S_2 - x_d)^{\alpha/2}} \nonumber\\
&\le& c\frac{S^{\alpha}_2}{q^{1+\alpha/2} }. 
\end{eqnarray*} 
since in this case 
$S_1>0$, hence  $S^2_2-S^2_1=q$.
The last inequality combined with \eqref{est-S_2} and \eqref{0toS_2W}  completes the proof of the lemma.
\end{proof}    

We now establish a two-sided estimate for $\calL_{e_d} g$. Recall that $z = 0$ so for $x \in D = B(0,r)$ we have $\delta_D(x) = r - |x|$.
\begin{lemma}
\label{S_3_S_4_estimates}
Let $|x| <r$ and $S_2= S_2(x)$. %$x_d \ge 0$.
Then we have
\begin{equation*}
\frac{\eps^{\alpha/2}}{\eta r^{1 - \alpha/2}} \calL_{e_d} g(x)
 \approx \frac{S_2^\alpha \vee q^{\alpha/2}}{q^{1+ \alpha/2}} \left(\frac\eps{\delta_D(x)}\wedge 1\right)^{1+\alpha}.
\end{equation*}
\end{lemma}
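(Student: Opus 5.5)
Since $g$ vanishes on $B(0,r)$, for $x\in B(0,r)$ the formula (\ref{calL_v}) reduces $\calL_{e_d} g(x)$ to an integral of $g$ along the vertical line through $x$. I plan to compute that line integral explicitly in terms of $S_2,S_3,S_4$ and then compare it with the right-hand side. Assume $x_d\ge 0$ as in the rest of this section. We have $g(x)=0$, and $g(\tilde x+se_d)=1$ exactly when $S_3(x)<|s|<S_4(x)$. Hence
\begin{equation*}
\calL_{e_d} g(x)=\Aa\left(\int_{S_3}^{S_4}\frac{ds}{(s-x_d)^{1+\alpha}}+\int_{S_3}^{S_4}\frac{ds}{(s+x_d)^{1+\alpha}}\right).
\end{equation*}
Since $x_d\ge0$, the second integral is at most the first, so $\calL_{e_d} g(x)\approx \int_{S_3}^{S_4}(s-x_d)^{-1-\alpha}\,ds$.

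Next I would show that $s-x_d\approx S_3-x_d$ uniformly for $s\in[S_3,S_4]$. This reduces to checking $S_4-S_3\lesssim S_3-x_d$.
\begin{itemize}
\item By (\ref{S_4_S_3}), $S_4-S_3\approx \eta r/S_3$.
\item By (\ref{S_3_x_d}), the denominator $S_3+x_d$ lies between $S_3$ and $2S_3$, because $x_d<S_2\le S_3$. Also $r+\eps+|x|\approx r$. Therefore $S_3-x_d\approx r(\eps+\delta_D(x))/S_3$.
\end{itemize}
Since $\eta\le\eps\le \eps+\delta_D(x)$, the required comparison holds. Consequently
\begin{equation*}
\calL_{e_d} g(x)\approx \frac{S_4-S_3}{(S_3-x_d)^{1+\alpha}}\approx \frac{\eta r}{S_3}\cdot\frac{S_3^{1+\alpha}}{r^{1+\alpha}(\eps+\delta_D(x))^{1+\alpha}}=\frac{\eta\, S_3^{\alpha}}{r^{\alpha}(\eps+\delta_D(x))^{1+\alpha}}.
\end{equation*}
Multiplying by $\eps^{\alpha/2}/(\eta r^{1-\alpha/2})$ gives
\begin{equation*}
\frac{\eps^{\alpha/2}}{\eta r^{1-\alpha/2}}\calL_{e_d} g(x)\approx \frac{\eps^{\alpha/2}S_3^{\alpha}}{r^{1+\alpha/2}(\eps\vee\delta_D(x))^{1+\alpha}}.
\end{equation*}

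Finally, I compare this with the claimed expression, using $q=2r\eps-\eps^2\approx r\eps$ (recall $\eps\le r/4$). The right-hand side of the lemma then becomes
\begin{equation*}
\frac{S_2^\alpha\vee (r\eps)^{\alpha/2}}{(r\eps)^{1+\alpha/2}}\cdot\frac{\eps^{1+\alpha}}{(\eps\vee\delta_D(x))^{1+\alpha}}=\frac{\eps^{\alpha/2}\,(S_2^\alpha\vee (r\eps)^{\alpha/2})}{r^{1+\alpha/2}(\eps\vee\delta_D(x))^{1+\alpha}}.
\end{equation*}
It therefore remains to check that $S_3^\alpha\approx S_2^\alpha\vee(r\eps)^{\alpha/2}$. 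This follows from $S_3^2=S_2^2+2r\eps+\eps^2\approx S_2^2+r\eps\approx S_2^2\vee r\eps$.

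The only delicate step is the uniform comparability of $s-x_d$ on $[S_3,S_4]$ near the boundary, where $x_d$ is close to $S_2$. There the choice $\eta\le\eps$ is essential, because it keeps the ring width $S_4-S_3$ below the gap $S_3-x_d$. All constants obtained depend only on $\alpha$.
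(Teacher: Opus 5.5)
Your proof is correct and follows the paper's argument. Like the paper, you drop the $(s+x_d)^{-1-\alpha}$ term, use (\ref{S_4_S_3}) and (\ref{S_3_x_d}) to get $S_4-S_3\lesssim S_3-x_d$ (which is where $\eta\le\eps$ enters), and conclude $\calL_{e_d} g(x)\approx (S_4-S_3)/(S_3-x_d)^{1+\alpha}$. The only difference is presentational: the paper splits into $|x|<r-\eps$ and $r-\eps\le|x|<r$, whereas your formulas $S_3-x_d\approx r(\eps+\delta_D(x))/S_3$ and $S_3^2\approx S_2^2\vee r\eps$ handle both cases at once and spell out the final comparison with the right-hand side, which the paper calls ``clear.''
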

\begin{proof} Throughout the proof $S_3=S_3(x)$ and $S_4=S_4(x)$.
Observe that 
$$\calL_{e_d} g(x)=\calA_{\alpha} \int_{S_3}^{S_4} \left(\frac{1}{|s - x_d|^{1+\alpha}} +  \frac{1}{|s + x_d|^{1+\alpha}}\right)\, ds.$$
We first consider  $|x| <r-\eps$.
Since
$$\int_{S_3}^{S_4}   \frac{1}{|s + x_d|^{1+\alpha}}\, ds\le \int_{S_3}^{S_4} \frac{1}{|s -x_d|^{1+\alpha}}\, ds$$ 
we have
$$
\calL_{e_d} g(x)\approx \int_{S_3 - x_d}^{S_4 - x_d} \frac{1}{|t|^{1+\alpha}} \, dt.
$$
Next, by (\ref{S_3_x_d}), we get
$$
S_3 - x_d \approx \frac{\delta_D(x) \sqrt{r}}{\sqrt{r - |\tilde{x}|}}.
$$
Using this and (\ref{S_4_S_3}) we obtain $S_4 - x_d = S_3 - x_d + S_4 - S_3 \approx S_3 - x_d$.
Hence
\begin{equation}
\label{S_3_S_4_estimates_L}
\calL_{e_d} g(x) \approx
 \frac{S_4 - S_3}{(S_3 - x_d)^{1 + \alpha}}
\approx \frac{\eta (r - |\tilde{x}|)^{\alpha/2}}{r^{\alpha/2} \delta_D^{1 + \alpha}(x)}.
\end{equation}

Next we  consider  $r-\eps\le |x| <r$.
We have $r + \eps - |x| \approx \eps$ and $0 \le x_d \le \sqrt{(r+\eps)^2 - |\tilde{x}|^2}$. Hence by (\ref{S_3_x_d}) we get
$$
S_3 - x_d \approx \frac{\eps \sqrt{r}}{\sqrt{r + \eps - |\tilde{x}|}}.
$$
By (\ref{S_4_S_3}) we also have 
$$
S_4 - S_3 \approx \frac{\eta \sqrt{r}}{\sqrt{r + \eps - |\tilde{x}|}}
\le \frac{\eps \sqrt{r}}{\sqrt{r + \eps - |\tilde{x}|}},
$$
so for $s \in (S_3, S_4)$ we have
$$
|s - x_d| = s - S_3 + S_3 - x_d \approx \frac{\eps \sqrt{r}}{\sqrt{r + \eps - |\tilde{x}|}}.
$$
Hence
\begin{equation}
\label{S_3_S_4_estimates_1}
\calL_{e_d} g(x)\approx  \int_{S_3}^{S_4} \frac{1}{|s - x_d|^{1+\alpha}} \, ds
\approx \frac{\eta (r + \eps - |\tilde{x}|)^{\alpha/2}}{r^{\alpha/2} \eps^{1 + \alpha}}.
\end{equation}
Clearly  \eqref{S_3_S_4_estimates_L} and (\ref{S_3_S_4_estimates_1}) give the assertion of the lemma.
\end{proof}

We are now ready to prove the first of the two main propositions in this section.
\begin{proof}[proof of Proposition \ref{f_b_estimate}]
Recall that $r > 0$, $\eps \in (0,r/4]$ and $\eta \in (0,\eps]$ are fixed.

Let $\kappa_1, \kappa_2, \kappa_3$ be constants from Lemmas \ref{under_S_1_estimates}, \ref{estimates_double} and \ref{estimates_S_1_S_2_minus_W}, respectively. Recall that $\calA_{\alpha}$ is defined in (\ref{operator_L}) and $\tilde{\calA}_{\alpha}$ is defined in (\ref{lambda_formula_generator}).

Fix $N \in \R$ satisfying the following conditions.

\begin{equation}
\label{condition_N_1}
N \ge \frac{r(r \vee 1)^2}{\eps},
\end{equation}

\begin{equation}
\label{condition_N_2}
N \ge \left(\frac{2\kappa_2 + 16 \kappa_3}{\kappa_1}\right)^{1/(2-\alpha)},
\end{equation}

\begin{equation}
\label{condition_N_3}
N \ge \left(\frac{\calA_\alpha (2\kappa_2 + 16 \kappa_3)}{\tilde{\calA}_{\alpha}}\right)^{1/(2-\alpha)}.
\end{equation}
Note that (\ref{condition_N_1}) implies that $N \ge r/\eps \ge 4$.

Let $\theta \in \calG(r,\eps)$ be the function which satisfies conditions (\ref{1N}), (\ref{1N4}) for the above $N$ and which existence is proved in Lemma \ref{properties_theta}.% Let $b$ be an arbitary positive number.

We will consider 3 cases:

{\bf Case 1}. $|x| \in (0, r - \eps)$.

{\bf Case 2}. $|x| \in [r -\eps, r - \eps + \eps/N]$.

{\bf Case 3}. $|x| \in (r - \eps + \eps/N, r)$.

Throughout the proof we skip $x$ in the notation of $S_1(x),  S_{*}(x), S_{**}(x), S_{***}(x), \ S_2(x)$. 
\vskip 2pt
First, we consider {\bf Case 1}.
 For $y \in \R^d$ put
$$
u(y) = f_{\theta}(y) -  h(y),
$$
where $h$ is given by (\ref{h_function}).
 By (\ref{h_function_generator}) for any $y \in B(0,r)$ we have $\calL_{e_d} f_{\theta}(y) = \calL_{e_d} u(y)$.
 Note that for any $y \in B(0,r)$ we have
\begin{equation}
\label{u_formula1}
u(y) =  (r^2 - |y|^2)^{\alpha/2} \left(\varphi(y) - \frac{1}{r^2 - |y|^2}\right).
\end{equation} 
 Note also that $u(y) = 0$ for $y \in B(0,r - \eps)$. In Case 1 we have $x \in B(0,r - \eps)$ so
\begin{eqnarray*}
\calL_{e_d} f_{\theta}(x) &=& \calL_{e_d} u(x)\\
&=& \calA_{\alpha} \int_{S_1}^{S_2} \frac{u(\tilde{x} +s e_d)}{|s - x_d|^{1+\alpha}} \, ds
+   \calA_{\alpha} \int_{-S_2}^{-S_1} \frac{u(\tilde{x} +s e_d)}{|s - x_d|^{1+\alpha}}  \, ds\\
&=& \text{I} + \text{II}.
\end{eqnarray*}
Denote $g(v) = (r^2 - v)^{-1}$. By the fact that $g''(v)$ is increasing on $(0,r^2)$ and (iv) in Definition \ref{Class_A_1}  we obtain  $\varphi(\tilde{x} + s e_d) - \frac{1}{r^2 - |\tilde{x} + s e_d|^2} \le 0$ for $s \in (S_1, S_2)$. Using this, (\ref{u_formula1}) and Lemma \ref{S_**_N} we obtain 
\begin{eqnarray}
\nonumber
\text{I}
&\le& \calA_{\alpha} \int_{S_{**}}^{S_{***}} 
\frac{(r^2 - |\tilde{x} + s e_d|^2)^{\alpha/2}}{|s - x_d|^{1+\alpha} }
\left(\varphi(\tilde{x} + s e_d) - \frac{1}{r^2 - |\tilde{x} + s e_d|^2}\right) \, ds\\
\label{I_estimate_1}
&\le& \calA_{\alpha} \int_{S_{**}}^{S_{***}} 
\frac{ (S_2^2 - s^2)^{\alpha/2}}{|s - x_d|^{1+\alpha} }
\left(\frac{1}{S_2^2 - S_{**}^2} - \frac{1}{S_2^2 - s^2}\right) \, ds. 
\end{eqnarray}
 Since we are in Case 1 for any $s \in (S_{**}, S_{***})$ we have $S_2^2 - s^2 \approx r \eps\approx q$. Hence for any 
$s \in (S_{**}, S_{***})$ we obtain
\begin{equation*}
\frac{1}{S_2^2 - S_{**}^2} - \frac{1}{S_2^2 - s^2 }
= -\frac{s^2 - S_{**}^2}{(S_2^2 - s^2)(r^2 - |\tilde{x}|^2 - S_{**}^2)}
\approx -\frac{s^2 - S_{**}^2}{q^2}.
\end{equation*}
Using this and (\ref{I_estimate_1}) we obtain
\begin{equation}
\label{I_estimate_2}
\text{I} \lesssim \frac{-q^{\alpha/2} }{q^2} \int_{S_{**}}^{S_{***}} \frac{(s - S_{**})(s + S_{**})}{|s - x_d|^{1+\alpha}} \, ds.
\end{equation}
Since we are in Case 1 we clearly have $S_{**} \approx \sqrt{r} \sqrt{r - |\tilde{x}|}\approx S_{2}\approx S_{***}$,
$$
S_{***} - S_{**} = 
\frac{S^2_{***} - S^2_{**}}{S_{***} + S_{**}} 
\approx \frac{r \eps}{S_{2}}\approx \frac{q}{S_{2}}.
$$
For any $s \in (S_{**}, S_{***})$ we also have
%$$
%|s - x_d| = s - x_d \ge S_{**} - x_d \ge \frac{\delta(|x|) \sqrt{r}}{\sqrt{r - |\tilde{x}|}}.
%$$
%POWINNO CHYBA BYC
$$
|s - x_d| = s - x_d \le S_{***} - x_d \le  \frac{\delta_D(x) 2{r}}{S_{***}}\approx \frac{\delta_D(x) {r}}{S_{2}}.
$$
It follows that 
\begin{eqnarray*}
\text{I} &\lesssim& 
{-q^{\alpha/2-2} } 
\frac{ S_{2}}{\left(\frac{\delta_D(x) r}{ S_{2}}\right)^{1 + \alpha}} 
\int_{S_{**}}^{S_{***}} (s - S_{**}) \, ds\\
&=& {-\frac12 q^{\alpha/2-2} } 
\frac{ S_{2}^{2 + \alpha}}{\left(\delta_D(x) r\right)^{1 + \alpha}}(S_{***} - S_{**})^2 \\
&\approx&
- \left (\frac\eps { \delta_D(x)}\right)^{1+\alpha} \frac{  S_{2}^{ \alpha}}{q^{1+\alpha/2}} .
\end{eqnarray*}
We clearly have $\text{II} \le 0$, so
\begin{equation}
\label{Case_1_I_III}
\calL_{e_d} f_{\theta}(x) \lesssim - \left (\frac\eps { \delta_D(x)}\right)^{1+\alpha} \frac{  S_{2}^{ \alpha}}{q^{1+\alpha/2}}.
\end{equation}

\vskip 2pt

Now, we consider {\bf Case 2}. 
We fix $x \in \R^d$ satisfying $|x| \in [r -\eps, r - \eps + \eps/N]$ and for $y \in \R^d$ we define
$$
u(y) = f_{\theta}(y) -\lambda(y) \varphi(x)= \lambda(y) \varphi(y) -\lambda(y) \varphi(x).
$$
Observe that 
$$
\calL_{e_d} f_{\theta}(x) = - \tilde{\calA}_{\alpha}  \varphi(x) 
+ \calL_{e_d} u(x).
$$
Since  
$
u(x) = 0$
we have
\begin{eqnarray*}
&&\calL_{e_d} u(x)\\
&=& 
\frac{\Aa}{2}  \int_{\R \setminus \{0\}} \left[u(x + e_d t) + u(x - e_d t) \right] \, \frac{dt}{|t|^{1 + \alpha}}\\ 
&=& \Aa \int_{|t| \ge \eps/N} u(x + e_d t) \, \frac{dt}{|t|^{1 + \alpha}}
+ \frac{\Aa}{2}  \int_{|t| < \eps/N} \left[u(x + e_d t) + u(x - e_d t) \right] \, \frac{dt}{|t|^{1 + \alpha}}.
\end{eqnarray*}
Put $W_{x, \eps/N} = \{s \in \R: s \in [x_d - \eps/N, x_d + \eps/N]\}$. 
It follows that in Case 2 we have
\begin{eqnarray*}
\calL_{e_d} f_{\theta}(x) &=& - \tilde{\calA}_{\alpha}  \varphi(x)
+ \calA_{\alpha} \int_{(-S_1,S_1) \setminus W_{x, \eps/N}} \frac{u(\tilde{x} +s e_d)}{|s - x_d|^{1+\alpha}} \, ds\\
&+&   \calA_{\alpha} \int_{(S_1,S_2) \setminus W_{x, \eps/N}} \frac{u(\tilde{x} +s e_d)}{|s - x_d|^{1+\alpha}} \, ds\\
&+&   \calA_{\alpha} \int_{(-S_2,-S_1) \setminus W_{x, \eps/N}} \frac{u(\tilde{x} +s e_d)}{|s - x_d|^{1+\alpha}} \, ds\\ 
&+& \frac{\Aa}{2}  \int_{|t| < \eps/N} \left[u(x + e_d t) + u(x - e_d t) \right] \, \frac{dt}{|t|^{1 + \alpha}}\\
&=& \text{I} + \text{II} + \text{III} + \text{IV} + \text{V}.
\end{eqnarray*}
By Lemma \ref{estimates_S_1_S_2_minus_W} and (\ref{1N4}) we obtain
\begin{equation}
\label{Case_2_V_1}
\text{III} =
\calA_{\alpha}  \int_{(S_1,S_2) \setminus W_{x, \eps/N}} 
\frac{\lambda(\tilde{x} + s e_d) (\varphi(\tilde{x} + s e_d) - \varphi(x)) }{|s - x_d|^{1 + \alpha}} \, ds
\le \frac{\kappa_3 \calA_{\alpha} }{N^2} \frac{r}{N} \frac {S^{\alpha}_2 }{ \eps^{1 + \alpha}}.
\end{equation}
%%%%%%%%%%%%%%%
By (\ref{condition_N_1})%and the fact that $|\tilde{x}| < r - 3\eps/4$ in Case 2 we obtain
$$
\frac{r}{N} \le \frac{\eps }{r} \frac1{r}  \le \left(\frac{\eps}{r}\right)^{\alpha/2}\frac1{r}.
$$
Hence (\ref{Case_2_V_1}) is bounded from above by
$$
\frac{\kappa_3 \calA_{\alpha} }{N^2} \frac{S^{\alpha}_2}{(r \eps)^{1 + \alpha/2}}\le 4\frac{\kappa_3 \calA_{\alpha} }{N^2} \frac{S^{\alpha}_2}{q^{1 + \alpha/2}}.
$$
By the same arguments $\text{IV}$ can be estimated from above by the same expression. Hence we obtain
\begin{equation}
\label{Case_2_V_VI}
\text{III} + \text{IV} \le 8\frac{\kappa_3 \calA_{\alpha} }{N^2} \frac{S^{\alpha}_2}{q^{1 + \alpha/2}}.
\end{equation}
By Lemma \ref{estimates_double} we obtain
\begin{eqnarray}
\nonumber
\text{V}
&=&
\frac{\calA_{\alpha} }{2} \int_{|t| \le \eps/N}
\Big( \lambda(x + t e_d) (\varphi(x + t e_d) - \varphi(x)) \\
\nonumber
&& \quad \quad \quad \quad \quad \quad \quad \quad \quad +   \lambda(x - t e_d) (\varphi(x - t e_d) - \varphi(x)) \Big) \frac{dt}{|t|^{1+\alpha}}  
\\
\label{Case_2_VII_1}
&\le& \frac{\calA_{\alpha}  \kappa_2}{2 N^{2 - \alpha}}  \frac{S_2^\alpha}{q^{1+\alpha/2}}.
\end{eqnarray}

Now, let consider two subcases of Case 2:
%%%%%%%%%%
Subcase 2A. $|x| \in [r -\eps, r - \eps + \eps/N]$ and $|\tilde{x}| \le r - 2 \eps$.
%%%%%%%%%%%%%%%%%%%%
Subcase 2B. $|x| \in [r -\eps, r - \eps + \eps/N]$ and $|\tilde{x}| > r - 2 \eps$.

First we consider Subcase 2A. 
Clearly, we have 
\begin{equation}
\label{Case_2A_I}
\text{I} \le 0.
\end{equation} 

Note that $x_d \ge S_1$, so for any $s \in (-S_1,S_1)$ we have
\begin{eqnarray}
\label{Case_2A_II_1}
u(\tilde{x} + s e_d) &=& 
 \lambda(\tilde{x} + s e_d) \left(\varphi(\tilde{x} + s e_d) - \varphi(\tilde{x} + x_d e_d)\right)\\
\label{Case_2A_II_2}
&\le&  \lambda(\tilde{x} + s e_d) \left(\varphi(\tilde{x} + s e_d) - \varphi(\tilde{x} + S_1 e_d)\right) < 0.
\end{eqnarray}
Since $N \ge 4$, $x_d \ge S_1$ and $\frac 13(S_2-S_1)\ge \frac\eps4$ (see Lemma \ref{under_S_1_estimates}) we obtain 
$$
\left[x_d - \frac{\eps}{N}, x_d + \frac{\eps}{N}\right] \cap \left[S_1 - \frac 12(S_2-S_1), S_1 - \frac 13(S_2-S_1)\right] = \emptyset.
$$
Using this (\ref{Case_2A_II_1}), (\ref{Case_2A_II_2}) and Lemma \ref{under_S_1_estimates} we obtain
\begin{eqnarray}
\nonumber
\text{II} &\le&
-\calA_{\alpha}  \int_{S_1 - \frac 13(S_2-S_1)}^{S_1 - \frac 12(S_2-S_1),}
\frac{\lambda(\tilde{x} + s e_d) (\varphi(\tilde{x} + S_1 e_d) - \varphi(\tilde{x} + s e_d))}{|s - x_d|^{1 + \alpha}} \, ds\\
&\le& - \calA_{\alpha} \kappa_1 \frac{S_2^\alpha}{q^{1+\alpha/2}}.
\label{Case_2A_II_3_0}
\end{eqnarray}
Finally, by  (\ref{Case_2_V_VI}), (\ref{Case_2_VII_1}), (\ref{Case_2A_I}) and (\ref{Case_2A_II_3_0}) we obtain
\begin{eqnarray}
\nonumber
\calL_{e_d} f_{\theta}(x)&=&\text{I} +  \text{II} +\text{III}  +\text{IV} + \text{V}\\ 
\nonumber
&\le&
\calA_{\alpha} \frac{S_2^\alpha}{q^{1+\alpha/2}}
\left(- \kappa_1 + \frac{8 \kappa_3}{N^2} + \frac{\kappa_2}{2 N^{2 - \alpha}}\right)\\
\label{Case_2A}
&\le&
\calA_{\alpha} \frac{S_2^\alpha}{q^{1+\alpha/2}}
\left(- \kappa_1 + \frac{8 \kappa_3 + \kappa_2}{N^{2 - \alpha}}\right)
\end{eqnarray}

Now let us consider Subcase 2B. 
In Case 2 we have $|x| \ge r - \eps$, so $\varphi(x) \ge \varphi((r - \eps)^2 e_d) = 1/q$, which gives  
\begin{equation}
\label{Case_2B_I}
\text{I} \le - \tilde{\calA}_{\alpha}  \frac{1}{q} .
\end{equation} 
%%%%%%%%%%
By the same arguments as in (\ref{Case_2A_II_1}-\ref{Case_2A_II_2}) we get 
\begin{equation}
\label{Case_2B_II}
\text{II} \le 0.
\end{equation} 
%%%%%%%%%%%%%%%%
Note that in Subcase 2B we have $r - |\tilde{x}| \le 2 \eps$, so ${S_2^2} \le 2 q$. Using this, (\ref{Case_2B_I}), (\ref{Case_2B_II}), (\ref{Case_2_V_VI}) and (\ref{Case_2_VII_1}) we obtain
\begin{eqnarray}
\nonumber
\calL_{e_d} f_{\theta}(x) &=&  \text{I} + \text{II} + \text{III} + \text{IV} + \text{V}\\
\nonumber
&\le&
\frac{1}{q }
\left(- {\tilde{\calA}_{\alpha} } + 8\frac{S_2^\alpha}{q^{\alpha/2}}\frac{ \kappa_3 \calA_{\alpha} }{N^2} + 
\frac{S_2^\alpha}{q^{\alpha/2}}\frac{\kappa_2 \calA_{\alpha} }{2 N^{2 - \alpha}}\right)\\
&\le& 
\label{Case_2B_I_II_V_VI_VII_2}
\frac1{q}
\left(- \tilde{\calA}_{\alpha} + \frac{8 \kappa_3 \calA_{\alpha} + \kappa_2 \calA_{\alpha}}{N^{2 - \alpha}}
\right).
\end{eqnarray}
%By (\ref{condition_N_3}) the expression in (\ref{Case_2B_I_II_V_VI_VII_2}) is nonpositive, which gives (\ref{Case_2B_I_II_V_VI_VII_1}).
%Next, by the estimate $(r - |\tilde{x}|)^{\alpha/2} \le 2 \eps^{\alpha/2}$, (\ref{Case_2_III_IV}) and (\ref{Case_2B_I}) there exists %(sufficiently big) $b > 0$ (which depends only on $\alpha$) such that $\frac{1}{2} \text{I} + \text{III} + \text{IV} \le 0$.
%%%%%%%%%%
%This and (\ref{Case_2B_I_II_V_VI_VII_1}) imply that  we have in Subcase 2B 
%$$\calL_{e_d} f_{\theta}(x) \le \frac1{q}
%\left(- \frac{\tilde{\calA}_{\alpha}}{2} + \frac{8 \kappa_3 \calA_{\alpha} + \kappa_2 \calA_{\alpha}}{N^{2 - \alpha}}\right).$$

\vskip 2pt
Now, we consider {\bf Case 3}. 
We fix $x \in \R^d$ satisfying $|x| \in (r - \eps + \eps/N, r)$ and for $y \in \R^d$ we define
$$
u(y) = f_{\theta}(y) - \lambda(y) \|\theta\|_{\infty}= f_{\theta}(y) - \lambda(y) \varphi(x)
$$
Note that for $y \in B(0,r) \setminus B(0, r - \eps + \eps/N)$ we have $\varphi(y) = \theta(|y|^2) = \|\theta\|_{\infty}$, so
$$
u(y) = \lambda(y) \varphi(y) - \lambda(y) \|\theta\|_{\infty} = 0.
$$
Hence
$$
\calL_{e_d} f_{\theta}(x) = - \tilde{\calA}_{\alpha}  \|\theta\|_{\infty}
+ \calL_{e_d} u(x)
$$
and
$$
\calL_{e_d} u(x) = 
\calA_{\alpha}  \int_{\R} \frac{u(x + e_d t)}{|t|^{1 + \alpha}} \, dt
= \calA_{\alpha} \int_{\R} \frac{u(\tilde{x} +s e_d)}{|s - x_d|^{1+\alpha}} \, ds. 
$$
It follows that in Case 3 we have
\begin{eqnarray*}
\calL_{e_d} f_{\theta}(x) &=& - \tilde{\calA}_{\alpha}  \|\theta\|_{\infty}
+ \calA_{\alpha} \int_{(-S_*,S_*)} \frac{u(\tilde{x} +s e_d)}{|s - x_d|^{1+\alpha}} \, ds\\
&=& \text{I} + \text{II}.
\end{eqnarray*}

Now, let consider two subcases of Case 3:\newline
Subcase 3A. $|x| \in (r - \eps + \eps/N, r)$ and $|\tilde{x}| \le r - 2 \eps$.\newline
Subcase 3B. $|x| \in (r - \eps + \eps/N, r)$ and $|\tilde{x}| > r - 2 \eps$.

First we consider Subcase 3A. 
Clearly, we have 
\begin{equation}
\label{Case_3A_I}
\text{I} \le 0.
\end{equation} 
Note that for any $s \in (-S_*,S_*)$ we have
\begin{equation}
\label{Case_3A_II_1}
u(\tilde{x} + s e_d) =
 \lambda(\tilde{x} + s e_d) \left(\varphi(\tilde{x} + s e_d) - \|\theta\|_{\infty} \right) \le 0.
\end{equation}
For any $s \in (-S_1,S_1)$ we also have
\begin{equation}
\label{Case_3A_II_2}
u(\tilde{x} + s e_d) \le  \lambda(\tilde{x} + s e_d) \left(\varphi(\tilde{x} + s e_d) - \varphi(\tilde{x} + S_1 e_d)\right) < 0.
\end{equation}
Using  \eqref{Case_3A_I}, (\ref{Case_3A_II_1}), (\ref{Case_3A_II_2}) and Lemma \ref{under_S_1_estimates} we obtain
\begin{eqnarray}
\nonumber
\calL_{e_d} f_{\theta}(x)
&\le &\nonumber
-\calA_{\alpha}  \int_{S_1 - \frac 13(S_2-S_1)}^{S_1 - \frac 12(S_2-S_1),}
\frac{\lambda(\tilde{x} + s e_d) (\varphi(\tilde{x} + S_1 e_d) - \varphi(\tilde{x} + s e_d))}{|s - x_d|^{1 + \alpha}} \, ds\\
&\le& - \calA_{\alpha} \kappa_1 \frac{S_2^\alpha}{q^{1+\alpha/2}}.
\label{Case_2A_II_3}
\end{eqnarray}

Now let us consider Subcase 3B. 
We have $\|\theta\|_\infty > \varphi((r - \eps) e_d) = 1/q $, which gives  
$\text{I} \le - \tilde{\calA}_{\alpha}/q$.
By the same arguments as in (\ref{Case_3A_II_1}) we get 
$\text{II} \le 0$.
Hence in Subcase 3B we have 
\begin{equation}
\label{Case_3B}
\calL_{e_d} f_{\theta}(x) \le - \tilde{\calA}_{\alpha}  \frac{1}{q}.\end{equation} 

Finally, we observe that for $|x|<r $
%%%%
\begin{equation}
\label{shape}
\frac{S_2^\alpha\vee q^{\alpha/2}}{q^{1+\alpha/2}}= \left\{             
\begin{array}{lll}                   
\displaystyle{ {S_2^\alpha}{q^{-1-\alpha/2}}},&\text{for}& |\tilde{x}|< r-\eps, \\  
q^{-1},&\text{for}& |\tilde{x}|\ge r-\eps.
\end{array}       
\right. 
\end{equation}
Next, by (\ref{condition_N_2}), (\ref{condition_N_3}), \eqref{shape} and  the estimates
\eqref{Case_1_I_III},
\eqref{Case_2A},
\eqref{Case_2B_I_II_V_VI_VII_2},
\eqref{Case_2A_II_3},
\eqref{Case_3B} we obtain 
\begin{equation}
\label{Case all}
\calL_{e_d} f_{\theta}(x) \lesssim - \left (\frac\eps { \delta_D(x)}\wedge 1\right)^{1+\alpha} \frac{  S_{2}^{ \alpha}\vee q^{\alpha/2}}{q^{1+\alpha/2}},\ |x|<r.
\end{equation}
Confronting this with Lemma \ref{S_3_S_4_estimates} we complete the proof.
\end{proof}

We now turn to the proof of the second main proposition in this section.
\begin{proof}[proof of Proposition \ref{F_b_estimate}]
We will consider 2 cases:

{\bf Case 1}. $|x| \in (0, r - \eps)$.

{\bf Case 2}. $|x| \in [r -\eps, r )$.

\vskip 2pt
First, we consider {\bf Case 1}.
 For $y \in \R^d$ put
$$
u(y) = F_{\Theta}(y) -  h(y),
$$
where $h$ is given by (\ref{h_function}) and $F_{\Theta}$ is defined by (\ref{f_definition2}). By (\ref{Theta}) we have for $v \in ((r -\eps)^2, r^2)$ 
$$\frac{1}{r^2 - v} - \Theta(v) = \left(\frac{v- (r - \eps)^2}q\right)^3\frac{1}{r^2 - v}.
$$
Hence for $|y| \in (r - \eps, r)$ we have 
\begin{eqnarray}
\nonumber
u(y) &=&  (r^2 - |y|^2)^{\alpha/2} \left(\varPhi(y) - \frac{1}{r^2 - |y|^2}\right)\\
&=&  -(r^2 - |y|^2)^{\alpha/2}\left(\frac{|y|^2 - (r - \eps)^2}q\right)^3\frac{1}{r^2 - |y|^2}.\label{u_formula1I} 
\end{eqnarray} 
Therefore for $s \in (S_1, S_2)$ we have 
\begin{eqnarray}u(\tilde{x} +s e_d)&=&-(r^2 - |\tilde{x}|^2-s^2)^{\alpha/2-1}\left(\frac{|\tilde{x}|^2+s^2 - (r - \eps)^2}q\right)^3\nonumber\\
&=&-(S_2^2-s^2)^{\alpha/2-1}\left(\frac{s^2-S_1^2}q\right)^3 \label{u_exp}
\end{eqnarray}

 By (\ref{h_function_generator}) for any $y \in B(0,r)$ we have $\calL_{e_d} F_{\Theta}(y) = \calL_{e_d} u(y)$.
 
 Note also that $u(y) = 0$ for $y \in B(0,r - \eps)$. In Case 1 we have $x \in B(0,r - \eps)$ so
\begin{eqnarray*}
\calL_{e_d} F_{\Theta}(x) &=& \calL_{e_d} u(x)\\
&=& \calA_{\alpha} \int_{S_1}^{S_2} \frac{u(\tilde{x} +s e_d)}{|s - x_d|^{1+\alpha}} \, ds
+   \calA_{\alpha} \int_{-S_2}^{-S_1} \frac{u(\tilde{x} +s e_d)}{|s - x_d|^{1+\alpha}}  \, ds\\
&=& \text{I} + \text{II}.
\end{eqnarray*}

Recall that we have assumed that $x_d\ge 0$. It is enough to estimate $\text{I}$. Applying (\ref{u_exp}) we have 

\begin{eqnarray*}
|\text{I}|
&=& \calA_{\alpha} \int_{S_{1}}^{S_{2}} 
(S_2^2-s^2)^{\alpha/2-1}\left(\frac{s^2-S_1^2}q\right)^3\frac1{(s - x_d)^{1+\alpha} }
 \, ds\\
&\le& \calA_{\alpha} \int_{S_{1}}^{S_{2}} 
(S_2^2-s^2)^{\alpha/2-1}\left(\frac{s^2-S_1^2}q\right)^3\frac1{(s - S_1)^{1+\alpha} }
 \, ds\\ 
 &\le& \calA_{\alpha} \int_{S_{1}}^{S_{2}} 
(q-(s^2-S_1^2))^{\alpha/2-1}\left(\frac{s^2-S_1^2}q\right)^3\frac{2^{1+\alpha}s S_2^\alpha}{(s^2 - S_1^2)^{1+\alpha} }\, ds\\
&\le& c\frac{S_{2}^\alpha}{q^{1+\alpha/2}}.
\end{eqnarray*}
The last inequality is obtained by elementary calculations.

Next we additionally assume that  $|x|\le r-2\eps$. In this case we have

$$S_1-x_d=  \frac {S^2_1-x^2_d}{S_1+x_d}\ge \frac18 \frac{\delta_D(x)\eps r}{S_2\eps}\ge c \frac{\delta_D(x) q}{S_2 \eps}. $$

hence
\begin{eqnarray*}
|\text{I}|
&=& \calA_{\alpha} \int_{S_{1}}^{S_{2}} 
(S_2^2-s^2)^{\alpha/2-1}\left(\frac{s^2-S_1^2}q\right)^3\frac1{(s - x_d)^{1+\alpha} }
 \, ds\\
&\le& 4\calA_{\alpha}\frac1{S_2(S_1-x_d)^{1+\alpha}} \int_{S_{1}}^{S_{2}} 
s(S_2^2-s^2)^{\alpha/2-1}
 \, ds\\  
&\le& c\frac{S_{2}^\alpha}{q^{1+\alpha/2}}\left(\frac \eps{\delta_D(x)}\right)^{1+\alpha}.
\end{eqnarray*}

Combining the two estimates we obtain that in the Case 1
$$|\text{I}|\le  c\frac{S_{2}^\alpha}{q^{1+\alpha/2}}\left(1\wedge\frac \eps{\delta_D(x)}\right)^{1+\alpha}$$
yielding
$$\calL_{e_d} F_{\Theta}(x)\ge -c\frac{S_{2}^\alpha}{q^{1+\alpha/2}}\left(1\wedge\frac \eps{\delta_D(x)}\right)^{1+\alpha}.$$
This and Lemma \ref{S_3_S_4_estimates} gives the assertion of the proposition in Case 1.

Now, we consider {\bf Case 2} that is
 $|x| \in [r -\eps, r)$ and for $y \in \R^d$ we define
$$
u(y) = F_{\Theta}(y) -\lambda(y) \varPhi(x)= \lambda(y) \varPhi(y) -\lambda(y) \varPhi(x).
$$
Observe that 
$$
\calL_{e_d} F_{\Theta}(x) = - \tilde{\calA}_{\alpha}  \varPhi(x) 
+ \calL_{e_d} u(x).
$$
Put $p(x) = S_2 - x_d$. Since $u(x) = 0$ we obtain that $\calL_{e_d} u(x)$ is equal to 
\begin{equation*}
\Aa \int_{|t| \ge p(x)} u(x + e_d t) \, \frac{dt}{|t|^{1 + \alpha}}
+ \frac{\Aa}{2}  \int_{|t| < p(x)} \left[u(x + e_d t) + u(x - e_d t) \right] \, \frac{dt}{|t|^{1 + \alpha}}.
\end{equation*}
Put $W_{x} = \{s \in \R: s \in [x_d - p(x), x_d + p(x)]\}$. 
It follows that in Case 2 we have
\begin{eqnarray*}
\calL_{e_d} F_{\Theta}(x) &=& - \tilde{\calA}_{\alpha}  \varphi(x)
+ \calA_{\alpha} \int_{(-S_2,S_2) \setminus W_{x}} \frac{u(\tilde{x} +s e_d)}{|s - x_d|^{1+\alpha}} \, ds\\ 
&+& \frac{\Aa}{2}  \int_{|t| < p(x)} \left[u(x + e_d t) + u(x - e_d t) \right] \, \frac{dt}{|t|^{1 + \alpha}}\\
&=& \text{I} + \text{II} + \text{III}.
\end{eqnarray*}

For $t \in (-p(x), p(x))$ we have
\begin{eqnarray}
\nonumber &&u(x + e_d t) + u(x - e_d t)\\
\nonumber
&=& \left(\lambda(x + t e_d) (\varPhi(x + t e_d) - \varPhi(x)) 
+ \lambda(x - t e_d) (\varPhi(x - t e_d) - \varPhi(x))\right)\\
\nonumber
&=& \lambda(x - t e_d) \left(\varPhi(x + t e_d) + \varPhi(x - t e_d) - 2 \varPhi(x)\right)\\
\nonumber
&+& \left(\lambda(x + t e_d) - \lambda(x - t e_d)  \right) \left(\varPhi(x + t e_d) - \varPhi(x) \right)\\
&\ge&  \left(\lambda(x + t e_d) - \lambda(x - t e_d)  \right) \left(\varPhi(x + t e_d) - \varPhi(x) \right) \label{Case 2L}
\end{eqnarray}
since the term $\lambda(x - t e_d) \left(\varPhi(x + t e_d) + \varPhi(x - t e_d) - 2 \varPhi(x)\right)$ is nonnegative by 
convexity of the function $t\to \varPhi(x + t e_d)$ for $t \in (-p(x), p(x))$.

By (\ref{Case 2L}) and Lemma \ref{estimates_double1}   we obtain
\begin{eqnarray}
\nonumber
\text{III} &=&
\calA_{\alpha}  \int_{|t| < p(x)} \left[u(x + e_d t) + u(x - e_d t) \right] \, \frac{dt}{|t|^{1 + \alpha}}\\
\nonumber
&\ge& \calA_{\alpha}  \int_{|t| < p(x)} \left(\lambda(x + t e_d) - \lambda(x - t e_d)  \right) \left(\varPhi(x + t e_d) - \varPhi(x) \right) \, \frac{dt}{|t|^{1 + \alpha}}\\
\label{Case_2L_III}
&\ge& -c\frac{S_2^\alpha }{q^{1+\alpha/2}}.
\end{eqnarray}

By  Lemma \ref{Case2L} 
\begin{eqnarray}
\label{Case_2L_II}
\text{II} 
 &\ge& -c\frac{S_2^\alpha \vee q^{\alpha/2}}{q^{1+\alpha/2}}.
\end{eqnarray}
and by  Lemma \ref{theta_estimates}
\begin{eqnarray}
\label{Case_2L_I}
\text{I} 
 &\ge& -c \frac1q\ge -c\frac{S_2^\alpha \vee q^{\alpha/2}}{q^{1+\alpha/2}}.
\end{eqnarray}

Then combining   (\ref{Case_2L_III}), (\ref{Case_2L_II}) and  (\ref{Case_2L_I}) we arrive at
$$\calL_{e_d} F_{\Theta}(x)\ge -c\frac{S_2^\alpha \vee q^{\alpha/2}}{q^{1+\alpha/2}} 
= - c \frac{S_2^\alpha \vee q^{\alpha/2}}{q^{1+\alpha/2}} \left(\frac{\eps}{\delta_D(x)} \wedge 1\right)^{1 + \alpha}.$$
This and Lemma \ref{S_3_S_4_estimates} gives the assertion of the proposition in Case 2.
\end{proof}

\section{Appendix}
In the Appendix we provide an explicit construction of a function in $\calG(r,\eps)$.

Fix $r > 0$ and $\eps \in (0,r/4]$. Recall that $q = r^2 - (r- \eps)^2$. Let $K_1$, $K_2$ be constants such that $K_1 q \in (0,r\eps/8)$, 
$K_2 \in (0,r\eps/8)$.

Put 
\begin{eqnarray*}
T_0 &=& (r - \eps)^2,\\
T_1 &=& (r - \eps)^2 + K_1q,\\
T_2 &=& (r - \eps)^2 + K_1q + K_2.
\end{eqnarray*}
We define a function $\theta$ by
\begin{equation}
\label{construction_1}
\theta(x)= \left\{              
\begin{array}{lll}                   
\frac{1}{r^2 - x},&\text{for}& x \in [0,T_0], \\ 
\theta(T_0) + \theta'_{-}(T_0) (x-T_0) + \frac{1}{2}\theta''_{-}(T_0) (x-T_0)^2 - \frac{1}{3q^4 K_1} (x - T_0)^3,&\text{for}& x \in (T_0,T_1], \\ 
\theta(T_1) + \theta'_{-}(T_1) (x-T_1) + \frac{\theta'_{-}(T_1)}{2K_2^3}\left(-2K_2(x-T_1)^3+(x-T_1)^4\right),&\text{for}& x \in (T_1,T_2], \\ 
\theta(T_2),&\text{for}& x \in [T_2,r^2),
\end{array}       
\right. 
\end{equation}
where $\theta'_{-}(T_i)$, $\theta''_{-}(T_i)$ denote the first and second left-hand derivatives of $\theta$ at the points $T_i$. 

At $T_0$ we have the following values of $\theta$, $\theta'_{-}$ and $\theta''_{-}$.
\begin{eqnarray*}
\theta(T_0) &=& 1/q,\\
\theta'_{-}(T_0) &=& 1/q^2,\\
\theta''_{-}(T_0) &=& 2/q^3.
\end{eqnarray*}

On $(T_0,T_1]$ we have the following expressions for $\theta$, $\theta'$ and $\theta''$.
\begin{eqnarray*}
\theta(T_1) &=& \frac{1}{q} \left(1+K_1+\frac{2 K_1^2}{3}\right),\\
\theta'(x) &=& \theta'_{-}(T_0) + \theta''_{-}(T_0) (x-T_0) - \frac{1}{q^4 K_1} (x - T_0)^2,\quad \text{for $x \in (T_0,T_1)$,}\\
\theta'_{-}(T_1) &=& \frac{1}{q^2} \left(1+K_1\right),\\
\theta''(x) &=& \theta''_{-}(T_0) - \frac{2}{q^4 K_1} (x - T_0),\quad \text{for $x \in (T_0,T_1)$,}\\
\theta''_{-}(T_1) &=& 0.
\end{eqnarray*}

Finally, on $(T_1,T_2]$ we have
\begin{eqnarray}
\label{theta_T2_T0}
\theta(T_2) &=& \theta(T_1) + \frac{K_2 \theta'_{-}(T_1)}{2} = \frac{1}{q} \left(1+K_1+\frac{2 K_1^2}{3}\right) + \frac{K_2}{2q^2} \left(1+K_1\right),\\
\nonumber
\theta'(x) &=& \theta'_{-}(T_1) + \frac{\theta'_{-}(T_1)}{2K_2^3}\left(-6K_2(x-T_1)^2+4(x-T_1)^3\right),\quad \text{for $x \in (T_1,T_2)$,}\\
\theta'_{-}(T_2) &=& 0,\\
\nonumber
\theta''(x) &=& \frac{\theta'_{-}(T_1)}{2K_2^3}\left(-12K_2(x-T_1)+12(x-T_1)^2\right),\quad \text{for $x \in (T_1,T_2)$,}\\
\nonumber
\theta''_{-}(T_2) &=& 0.
\end{eqnarray}

%It follows that $\theta \in C^2[0,r^2)$, so (ii) in Definition \ref{Class_A_1} is satisfied. 
Hence $\theta \in C^2[0,r^2)$, so condition (ii) in Definition \ref{Class_A_1} holds.

Let $K>0$ be a constant satisfying $(r-\eps)^2 + qK_1 + K_2 = (r - \eps + K)^2$. We have
\begin{eqnarray*}
K &=& \sqrt{(r - \eps)^2 + (qK_1 + K_2)} - (r - \eps)\\
&=& \frac{qK_1 + K_2}{\sqrt{(r - \eps)^2 + (qK_1 + K_2)} + (r - \eps)}\\
&\le& \frac{r \eps/4}{2(r - \eps)}\\
&<& \frac{\eps}{4},
\end{eqnarray*}
where we used our assumptions: $q K_1 \le \eps/8$, $K_2 \le \eps/8$, $\eps \le r/4$. 
%(\ref{construction_1}) and estimates of the constant $K$ imply that (i) in Definition \ref{Class_A_1} is satisfied. 
By \eqref{construction_1} and the above bound on $K$, condition (i) in Definition \ref{Class_A_1} holds.

%By straightforward computation one easily gets that $\theta' > 0$ on $(T_0,T_1)$ and $(T_1,T_2)$, so (iii) in Definition \ref{Class_A_1} is %satisfied. Similarly, one obtains $\max\{\theta''(x): \, x \in [0,r^2)\} = \theta''((r-\eps)^2)$, so (iv) in Definition \ref{Class_A_1} is %satisfied. Hence, $\theta \in  \calG(r,\eps)$.
A direct computation shows that $\theta'>0$ on $(T_0,T_1)$ and $(T_1,T_2)$, hence condition (iii) holds.
Moreover, one checks that $\max\{\theta''(x):x\in[0,r^2)\}=\theta''((r-\eps)^2)$, so condition (iv) holds. Therefore, $\theta \in  \calG(r,\eps)$.

\begin{lemma}
\label{properties_theta}
Fix $r > 0$ and $\eps \in (0,r/4]$. For any $N > 0$ there exist constants $K_1, K_2$ satisfying $K_1 q \in (0,r \eps/8)$, $K_2 \in (0,r \eps/8)$ and a function $\theta \in \calG(r, \eps)$ defined by (\ref{construction_1}) such that 
\begin{equation}
\label{1N}
K = K_1 q + K_2 \le \frac{\eps}{N}.
\end{equation}
\begin{equation}
\label{1N4}
\|\theta\|_{\infty} - \frac{1}{q} \le \frac{1}{N^{4 + \alpha}}.
\end{equation}
\end{lemma}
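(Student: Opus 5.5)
The plan is to make both $K_1$ and $K_2$ explicitly small. The construction (\ref{construction_1}), as verified just above, produces a function in $\calG(r,\eps)$ for \emph{every} admissible pair $K_1 q\in(0,r\eps/8)$, $K_2\in(0,r\eps/8)$. So the only work is to quantify how $K$ and $\|\theta\|_\infty$ depend on $K_1,K_2$, and then impose finitely many extra smallness constraints on them. Both quantities tend to their limiting values as $K_1,K_2\to 0$, so such a choice clearly exists.

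\textbf{Step 1: identify $\|\theta\|_\infty$.} I will first argue that $\|\theta\|_\infty=\theta(T_2)$.
\begin{itemize}
\item On $[0,T_0]$ the function $\theta(v)=1/(r^2-v)$ is increasing.
\item On $(T_0,T_1)$ and $(T_1,T_2)$ we have $\theta'>0$; this is condition (iii), already checked in the Appendix.
\item On $[T_2,r^2)$ the function $\theta$ is constant, equal to $\theta(T_2)$.
\end{itemize}
Since $\theta$ is continuous and positive, it is nondecreasing on $[0,r^2)$, and hence its supremum is $\theta(T_2)$. By (\ref{theta_T2_T0}) and $\theta(T_0)=1/q$,
$$
\|\theta\|_\infty-\frac1q=\frac{K_1}{q}\Big(1+\frac{2K_1}{3}\Big)+\frac{K_2(1+K_1)}{2q^2}.
$$
If $K_1\le 1$, this is at most $\frac{2K_1}{q}+\frac{K_2}{q^2}$.

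\textbf{Step 2: choose the constants.} Given $N>0$, I take $K_1>0$ so small that
$$
K_1\le 1,\qquad K_1q<\frac{r\eps}{8},\qquad K_1 q\le \frac{\eps}{2N},\qquad \frac{2K_1}{q}\le \frac{1}{2N^{4+\alpha}}.
$$
For example, $K_1=\min\{1,\ r\eps/(16q),\ \eps/(2Nq),\ q/(4N^{4+\alpha})\}$ works. I then take $K_2>0$ with
$$
K_2<\frac{r\eps}{8},\qquad K_2\le\frac{\eps}{2N},\qquad K_2\le \frac{q^2}{2N^{4+\alpha}}.
$$
With these choices $K_1q+K_2\le \eps/N$, which is (\ref{1N}). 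By Step 1,
$$
\|\theta\|_\infty-\frac1q\le \frac{1}{2N^{4+\alpha}}+\frac{1}{2N^{4+\alpha}}=\frac{1}{N^{4+\alpha}},
$$
which is (\ref{1N4}). Membership $\theta\in\calG(r,\eps)$ follows from the Appendix discussion, since $K_1q,K_2\in(0,r\eps/8)$.

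\textbf{Where care is needed.} The statement (\ref{1N}) writes $K=K_1q+K_2$, whereas the Appendix defines $K$ through $(r-\eps)^2+qK_1+K_2=(r-\eps+K)^2$. The estimate there gives the true $K\le (qK_1+K_2)/(2(r-\eps))$, and this is at most $qK_1+K_2$ whenever $2(r-\eps)\ge 1$. For small $r$ that comparison can fail, so I will handle both readings uniformly. I will shrink $K_1$ and $K_2$ by the extra factor $\min\{1,2(r-\eps)\}$ in the two $\eps/(2N)$ constraints. This makes both $K_1q+K_2$ and the geometric $K$ at most $\eps/N$, which is what Lemma \ref{S_**_N} actually uses ($K<\eps/N$).

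The monotonicity claim in Step 1 rests on the sign of $\theta'$ on the two polynomial pieces, which the Appendix only asserts. If needed I will verify it directly:
\begin{itemize}
\item On $(T_0,T_1)$: $\theta'(x)=q^{-2}+2q^{-3}(x-T_0)-q^{-4}K_1^{-1}(x-T_0)^2$. Using $x-T_0\le K_1q$, the last term is at most $q^{-3}(x-T_0)$, so $\theta'\ge q^{-2}>0$.
\item On $(T_1,T_2)$: writing $\tau=x-T_1\in(0,K_2)$, we have $\theta'=\theta'_-(T_1)\,(2K_2^3)^{-1}(2K_2-\tau)^2(K_2-\tau)(K_2+\tau)>0$.
\end{itemize}
With these checks the proof is complete.
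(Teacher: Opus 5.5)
Your proposal is correct and follows the paper's own argument: read off $\|\theta\|_\infty-1/q=\theta(T_2)-\theta(T_0)$ from \eqref{theta_T2_T0} and take $K_1,K_2$ small. You also usefully make explicit two points the paper leaves implicit, namely the monotonicity behind $\|\theta\|_\infty=\theta(T_2)$, and the gap between $K_1q+K_2$ and the geometric $K$ of Definition~\ref{Class_A_1}, which your factor $\min\{1,2(r-\eps)\}$ handles correctly. The one slip is in your optional check on $(T_1,T_2)$. The correct factorization is $\theta'(x)=\theta'_-(T_1)K_2^{-3}(K_2-\tau)^2(K_2+2\tau)$, whereas your quartic has the wrong degree and gives $2K_2\theta'_-(T_1)$ at $\tau=0$. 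Since the correct expression is still positive for $\tau\in(0,K_2)$, nothing else changes.
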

\begin{proof}
Indeed, by (\ref{theta_T2_T0}), we have
$$
\|\theta\|_{\infty} - \frac{1}{q} = \theta(T_2) - \theta(T_0) = 
\frac{1}{q} \left(K_1 + \frac{2 K_1^2}{3}\right) + \frac{K_2}{2q^2} \left(1+K_1\right).
$$
Hence for any $N > 0$ there exists sufficiently small $K_1, K_2$ such that $K_1 q \in (0,r \eps/8)$, $K_2 \in (0,r \eps/8)$ and (\ref{1N}), (\ref{1N4}) are satisfied.
\end{proof}

%\textbf{ Acknowledgements.} 

\end{document}